\newtheorem{prop}{Proposition}[section]
\newtheorem*{defi*}{Definition}
\newtheorem{defi}[prop]{Definition}
\newtheorem{lem}[prop]{Lemma}
\newtheorem{rem}[prop]{Remark}
\newtheorem{thm}[prop]{Theorem}
\newtheorem{coro}[prop]{Corollary}
\newtheorem*{thm0}{Theorem 0}
\newtheorem*{thmI}{Theorem I}
\newtheorem*{thmII}{Theorem II}
\numberwithin{equation}{section}
\begin{document}

\noindent
{\Large\bf A natural extension of Markov processes and applications to singular SDEs
}\\

\noindent
Lucian Beznea\footnote{Simion Stoilow Institute of Mathematics  of the Romanian Academy,
 Research unit No. 2, 
P.O. Box \mbox{1-764,} RO-014700 Bucharest, Romania,  University of Bucharest, 
Faculty of Mathematics and Computer Science, and Centre Francophone en Math\'ematique de Bucarest
(e-mail: lucian.beznea@imar.ro)},
Iulian C\^{i}mpean\footnote{Simion Stoilow Institute of Mathematics  of the Romanian Academy,
Research unit No. 2, P.O. Box  1-764, RO-014700 Bucharest, Romania
(e-mail: iulian.cimpean@imar.ro)},
Michael R\"ockner\footnote{Fakult\"at f\"ur Mathematik, Universit\"at Bielefeld,
Postfach 100 131, D-33501 Bielefeld, Germany, and Academy for Mathematics and Systems Science, CAS, Beijing 
(e-mail: roeckner@mathematik.uni-bielefeld.de)}
\\[5mm]

\noindent
{\bf Abstract.} 
We develop a general method for extending Markov processes to a larger state space such that the added points form a polar set.
The so obtained extension is an improvement on the standard trivial extension in which case the process is made stuck in the added points, and it renders a new technique of constructing extended solutions to S(P)DEs from all starting points, in such a way that they are solutions at least after any strictly positive time. 
Concretely, we adopt this strategy to study SDEs with singular coefficients on an infinite dimensional state space (e.g. SPDEs of evolutionary type), for which one often encounters the situation where not every point in the space is allowed as an initial condition. 
The same can happen when constructing solutions of martingale problems or Markov processes from (generalized) Dirichlet forms, to which our new technique also applies.
\\[2mm]

\noindent
{\bf Keywords:}  
Stochastic differential equation on Hilbert spaces; 
Stochastic PDE; 
Martingale problem; 
Not allowed starting point; 
Girsanov transform; 
Nonregular drift; 
Dirichlet form; 
Right process;  
Fine topology.
\\

\noindent
{\bf Mathematics Subject Classification (2010):} 
60H15,     
60H10,     
60J45,  	
60J35, 	
60J40,  	
60J57,  	
31C25,      
47D07,  	
35R60,       
60J25.        


\section{Introduction and the main results}
It is a common phenomenon in the construction of Markov processes in infinite dimensions, say on a separable $\mathbb{R}$-Hilbert space $H$ (be it through solving a martingale problem or an SDE on $H$, e.g. an SPDE of evolution type) that one has to restrict the set of "allowed" starting points.
This in part is due to the fact of non-existence of fundamental solutions (heat kernels) for the corresponding generating Kolmogorov operators or the singular behavior of the heat kernel when $t\to 0$, if it exists.
Such a situation one encounters, in particular, when constructing Markov processes starting from (generalized) Dirichlet forms (see Subsection 1.3 below), but also when one tries to construct solutions to SDEs, e.g. stochastic reaction diffusion equations.
In fact, even knowing in advance that the corresponding resolvent is (Lipschitz) strong Feller or moreover with corresponding transition semigroup being (Lipschitz) strong Feller, such a situation can occur.
A surprising fact is that in the latter case there is a simple counter example on $\mathbb{R}\setminus \{0\}$ where the strong Feller property of the transition semigroup does not guarantee that one can solve the corresponding martingale problem for any starting point (see Corollary \ref{rem 3.16} from Appendix).

The general aim of this work is to develop a natural way of extending the state space of a Markov process which at a first stage is constructed on a smaller space (see Theorem \ref{thm 2.2}) of "good"points, such that when the extended process starts from the added "bad" points, it will immediately enter the space of "good" points, where it stays for the rest of the time, i.e., in potential theoretical terms, the set of bad starting points is polar.
We call this procedure "natural extension", and the main tools to develop it are potential theoretic, based on a Ray type completion of the state space of a right Markov process as developed in \cite{BeRo11a} and \cite{BeBo04a}.
We want to stress  that our natural extension is completely different from the usual one, called trivial extension, where the Markov process is made stuck for bad starting points.
In the latter case, the process has no relation to the corresponding SDE or martingale problem when started at such a bad point, while in our case the process immediately enters the set of good points (see the beginning of Subsection 1.3 for more details).

Examples in infinite dimensions where bad starting points occur are discussed in  \cite{DaRo02a}, \cite{DaRo02b}, \cite{DaRoWa09}, \cite{DaFlPriRo13}, \cite{DaFlPriRo13}, \cite{DaFlRoVe16}.
More precisely, the solutions to the SDEs therein considered were constructed for all starting points except the ones of the abstract (though negligible) set of 'bad' starting points.
To avoid any confusion, we would like to point out that although the main results from \cite{DaRo02a} claim the existence of the associated diffusion process from all starting points, there is a small gap which was treated afterward in \cite{DaRo02b}.
In this regard, the question of existence (and uniqueness) of solutions which are allowed to start from such bad points remained open, and one concrete goal of this work is to give a positive answer to this question, based on the general technique of extending Markov processes developed in Subsection 1.2. 
More precisely, in \thnameref{thm 3.4} and \thnameref{thm 3.16} we show that the solutions obtained in the aforementioned papers can be extended to Lipschitz strong Feller diffusions on the entire space of starting points, in a unique natural way, so that the associated martingale problems can be solved for all starting points. 
We thus show that all statements made in \cite{DaRo02a} concerning the solutions of the martingale problem for any starting point are indeed correct and the completion of the proof is contained in this paper.
Moreover, we show that when the extended diffusions start from a bad point, they become solutions in the classical sense for the corresponding SDEs, after any strictly positive moment of time.
It seems that in most general situations this is the best result one could possibly expect. 

Concerning other previously known extension techniques for SDEs, let us mention the method of "generalized solutions" from \cite{DaZa14}, Subsection 7.2.4. (see also \cite{Ge14}), where a pathwise extension is constructed for the solution of an SDE with continuous and dissipative drift, which is constructed at a first stage only on a smaller space. 
Even in this situation, when starting from a bad point, the so extended process is not always associated to a corresponding SDE.
In this paper, when we apply our natural extension to an SDE, the drift consists not only of a singular dissipative part, but also a merely bounded part, so that the above extension can not be applied; though, we make use of such an extension when the merely bounded part is zero, see Subsection 1.1. 
Moreover, our natural extension enjoys a smoothing-type effect, in the sense that when it starts from a bad point, it will immediately enter the set of good points, where it remains for the rest of time. 
Due to this behaviour, we are able to show that the so extended Markov process solves the corresponding SDE after any infinitesimally small time $t>0$.

Concerning the structure of this paper, we would like to mention that instead of starting with the general result on natural extensions of Markov processes and then look at the applications to S(P)DEs, we preferred to do it the other way around, with the hope that the reader would naturally be led from concrete difficulties arising from singular SDEs, to the importance of considering the general problem of extending the state space of a Markov process.
Concretely, the remainder of this section is structured in three subsections: 
In Subsection 1.1 we recall the SDEs under consideration together with several known results which are needed in this paper; then we present our new results (\thnameref{thm 3.4} and \thnameref{thm 3.16}), which in particular solve the left over problems of the 'bad' starting points in \cite{DaRo02a, DaRo02b}.
Subsection 1.2 is devoted to the general result (Theorem \ref{thm 2.2}) concerning the extension of the state space of a Markov process, where most of the potential theoretic techniques occur. 
We emphasize that the main result of this subsection is the key instrument used to prove the results of the first subsection. 
In Subsection 1.3, as another application of the general results from Subsection 1.2, we apply our technique of natural extension to construct right processes from (generalized) Dirichlet forms, so that they can start in a natural way from all points of the state space, thus avoiding the trivial modification which is usually implemented (see Corollary \ref{coro 3.1}).

Section 2 contains the proofs of the results stated in Section 1, and it is again organized in three subsections, corresponding to those from Section 1.

Finally, in the Appendix we give an overview of right processes and their potential theory, where one of the aims is to explain carefully the role of different topologies like the fine topology and its natural topologies, which are frequently encountered within the main body of the paper. 
Therefore, our recommendation to the reader who is particularly interested in Subsection 1.2 and the details of the proofs in Section 2, is to start with the Appendix.

\subsection{Stochastic equations in Hilbert spaces with nonregular drifts} 

We place ourselves into the framework of the papers \cite{DaRo02a}, \cite{DaRoWa09}, and \cite{DaFlRoVe16}.
More precisely, let $(H,\langle\cdot,\cdot\rangle)$ be a real separable Hilbert space (with norm $|\cdot|$) and consider the stochastic differential equation 

\begin{equation} \label{eq 3.1}
  \begin{cases}
     dX(t)=(AX(t)+F_0(X(t))+B(X(t)))dt+\sigma dW(t)\\
    X(0)=x\in H,
  \end{cases}
\end{equation}
where $W$ is an $H$-valued cylindrical Wiener process on some probability space.

Concerning the coefficients appearing in (\ref{eq 3.1}), the following two hypotheses will be in force for the rest of the paper.

\vspace{0.2cm}
\noindent
{\bf Hypothesis 1.} 
\begin{enumerate}[(i)]
\item $A:D(A)\subset H \rightarrow H$ is a self-adjoint linear operator which generates a $C_0$-semigroup $T_t=e^{tA}$ on $H$, and there exists $\omega \in \mathbb{R}$ such that
$$
\langle Ax, x \rangle \leq \omega |x|^{2} \quad \mbox{   for all } x\in D(A).
$$
\item $\sigma$ is symmetric and positive definite such that $\sigma^{-1} \in L(H)$ (for simplicity one may assume that $\sigma=Id$) and for some $\alpha >0$
$$
\int_0^{\infty}(1+t^{-\alpha})|T_t  |_{\rm HS}^{2} \; dt < \infty,
$$
where $|\cdot|_{\rm HS}$ denotes the Hilbert-Schmidt norm.
\item $F_0$ is a (possibly) nonlinear mapping given by 
$$
F_0(x):=\mathop{arg \, min}\limits_{y\in F(x)}|y|, \quad x\in D(F),
$$
where $F:D(F)\subset H \rightarrow 2^{H}$ is an $m$-dissipative mapping, i.e. 
$$
\langle u-v,\; x-y\rangle \leq 0 \; \mbox{ for all } x,y\in D(F), \; u\in F(x),\; v\in F(y),
$$
and ${\rm Range}\;({\sf I}-F):= \mathop{\bigcup}\limits_{x\in D(F)}(x-F(x))=H$.
\end{enumerate}

The {\it Kolmogorov operator} associated to \eqref{eq 3.1} with $B=0$ is
$$
L_0 \varphi(x) =\frac{1}{2} {\rm Tr} [\sigma^{2}D^{2}\varphi(x)]+ \langle x , A D\varphi(x)\rangle + \langle F_0(x) , D\varphi(x) \rangle, \quad x\in D(F), \varphi \in \mathcal{E}_A(H),
$$
where $\mathcal{E}_A(H)$ is the linear space generated by the (real parts of) functions of type $\varphi(x)=\exp\{i\langle x, h \rangle\}$ with $h \in D(A)$.

\vspace{0.2cm}
\noindent
{\bf Hypothesis 2.} There exists a Borel probability measure $\nu$ on $H$ such that
\begin{enumerate}[(i)]
\item $ \displaystyle\int_{D(F)}(1+|x|^{4})(1+|F_0(x)|^{2}) \; \nu(dx) <\infty$.
\item $\displaystyle\int_H L_0 \varphi \; d\nu =0 \quad \mbox{ for all } \varphi \in \mathcal{E}_A(H).$
\item $\nu(D(F))=1$.
\end{enumerate}

For an exposition of concrete examples when the previous two hypotheses are fulfilled, we refer to \cite{DaRo02a}, \cite{DaRoWa09}, and \cite{DaFlRoVe16}.

Let $H_0:= supp(\nu)$ and $Lip_b(H_0)$ denote the space of all bounded Lipschitz functions on $H_0$.
By $b\mathcal{B}(H_0)$ we denote the space of all bounded and measurable functions from $H_0$ to $\mathbb{R}$; this notation will be later used for other spaces instead of $H_0$, with the same meaning.

We summarize now some of the main results from \cite{DaRo02a} and \cite{DaRoWa09} which we particularly rely on, more precisely (parts of) Theorem 2.3, Proposition 5.2, corollaries 5.3 and 5.4, and respectively Theorem 1.6 and Corollary 1.7.

\begin{thm0} \thlabel{thm 3.2} The following assertions hold.
\begin{enumerate}[(i)]
\item (cf. [Da Prato\slash R: PTRF 2002]) $(L_0, \mathcal{E}_A(H))$ is closable on $L^{2}(H,\nu)$, its closure denoted by $(L, D(L))$ is $m$-dissipative and:

\quad (i.1) there exists a Lipschitz strong Feller Markovian semigroup of kernels on $H_0$ denoted by $(P_t)_{t \geq 0}$ such that $\mathop{\lim}\limits_{t\to 0}P_tf=f$ pointwise on $H_0$ for all $f\in Lip_b(H_0)$; by (Lipschitz) strong Feller we mean $P_t(b\mathcal{B}(H_0))\subset C_b(H_0) (\mbox{resp. } Lip_b(H_0))$.

\quad (i.2) $\nu$ is invariant for $(P_t)_{t \geq 0}$ and the extension of $(P_t)_{t\geq 0}$ to $L^{2}(\nu)$ is the strongly continuous semigroup generated by $L$.
\item (cf. [Da Prato\slash R \slash Wang: JFA 2009]) $\nu$ satisfying Hypothesis 2 is unique, $P_t(L^{q}(H,\nu))\subset C(H_0)$, and the following Harnack inequality holds
\begin{equation*} 
(P_tf(x))^{q}\leq P_tf^{q}(y)e^{|\sigma^{-1}|^2\frac{p\omega |x-y|^{2}}{(q-1)(1-e^{-2\omega t})}}
\end{equation*}
for all $f\geq 0, t>0, q\in (1,\infty), x,y\in H_0$.
In particular, $P_t(dx)<<\nu,\; t>0.$
\end{enumerate}
\end{thm0}

\begin{rem} \label{rem 3.3}
\begin{enumerate}[(i)]
\item Hypothesis 1, (ii) implies that $\rm{tr}(A^{-1})<\infty$, and because Hypothesis 2 is in force, we can apply \cite{BoDaRo96}, Theorem 1.1 to deduce that $\nu << N(0, \frac{1}{2}A^{-1})$. This will be useful later to prove It\^o's formula for Lipschitz functions; see Proposition \ref{prop 3.8}.
\item Let $\mathcal{F}\mathcal{C}_b^{2}$ denote the space of functions of type $\phi(\varphi_1,\dots,\varphi_k)$ for all $k\geq 1$ and bounded functions $\phi:\mathbb{R}^{k}\rightarrow \mathbb{R}$ with continuous and bounded first and second derivatives, where $\varphi_k$ are defined in the proof of \thnameref{thm 3.4}.
Then by Proposition 3.3 from \cite{DaFlRoVe16}, we have that $\mathcal{F}\mathcal{C}_b^{2}$ is a core for $(L, D(L))$.
\end{enumerate}
\end{rem}

Let us take a moment to explain briefly the strategy used in the above mentioned papers in order to construct solutions for equation (\ref{eq 3.1}), so that we will have a clear context which leads us to the main goal of the present work.
Consider first the case $B\equiv 0$, as in \cite{DaRo02a} and \cite{DaRoWa09}.
The idea is to show that the operator $(L,D(L))$ fits in the framework of \cite{St99b} (or more particularly \cite{St99a}), i.e. it is the generator of a quasi-regular local generalized Dirichlet form, so that there exist a set $M\in \mathcal{B}(H_0)$ s.t. $\nu(H_0\setminus M)=0$ and a conservative normal strong Markov process with continuous paths on $M$ whose transition function is precisely the restriction to $M$ of $(P_t)_{t\geq 0}$.
This process is then shown to satisfy the martingale problem for the canonical projections on the directions given by an orthonormal basis which diagonalize $A-\omega{\rm Id}$ for some $\omega >0$. 
In fact, it is shown that the corresponding martingales are standard real valued Brownian motions, which means that the constructed Markov diffusion is a weak solution for (\ref{eq 3.1}), case $B\equiv 0$. Then, based on the Yamada-Watanabe type results from \cite{On04}, pathwise uniqueness and hence the existence of strong solutions are obtained. The strong Feller properties and Wang's Harnack inequalities are obtained by an approximation technique which regularize $F_0$ by convolution with infinite dimensional Gaussian semigroups.
The case when $B$ is bounded and measurable is treated in \cite{DaFlRoVe16}, but $F_0$ is restricted to be the sub-gradient of a convex function.
In this case, the existence of a solution is ensured by a Girsanov transformation performed on the solution of (\ref{eq 3.1}) for $B\equiv 0$, but again, we stress that this is possible only on the smaller set $M$.
Pathwise uniqueness is then obtained by an infinite dimensional Zvonkin-type transformation.

\vspace{0.2 cm}
\noindent
{\bf Main goal.} Our central concern in this paper is to deal with the 'bad' starting points from $H_0\setminus M$. 
More precisely, the aim is the following: first, show that there exists a Lipschitz strong Feller diffusion Markov process on the entire space $H_0$ with transition function $(P_t)_{t\geq 0}$, which solves the asociated martingale problem for all starting points $x\in H_0$; second, investigate if the extended Markov process starting from a bad 'point' remains a classical solution for the SDE (\ref{eq 3.1}).

We will split the study in two, the case when $B\equiv 0$ and the case when $B$ is bounded and measurable, but before that, let us point out that the first part of the main goal can be extracted and treated as a particular case of a general extension problem of the state space of a Markov process, which is in fact of main interest:

\vspace{0.2 cm}
\noindent
{\bf A general extension problem.}
Let $\mathcal{U}:=(U_\alpha)_{\alpha>0}$ be the Markovian resolvent of kernels associated with $(P_t)_{t\geq 0}$, 
$$
U_\alpha f:=\int_0^{\infty}e^{-\alpha t} P_tfdt \mbox{ for all } f\in b\mathcal{B}(H_0).
$$
Forgetting that we deal with SDEs on Hilbert spaces but keeping in mind that $M$ plays the role of the set of 'good' starting points, we place ourselves in the following abstract situation: we are given a Markovian resolvent of kernels $\mathcal{U}$ on a topological space $E$ (replacing $H_0$ above), and a subset $M\subset \mathcal{B}(E)$ for which $U_\alpha (1_{E\setminus M})\equiv 0, \alpha>0$, so that the restriction of $\mathcal{U}$ from $E$ to $M$ is the resolvent of a normal strong Markov process with right continuous (or continuous) paths on $M$. 
Is it possible to extend this process to the entire space $E$ so that it has resolvent $\mathcal{U}$, and hopefully having the property that if it starts from a point $x\in E\setminus M$, it will immediately enter $M$? 
We treat this abstract problem separately in Subsection 1.2, with the emphasis that the general result obtained there are used to prove the main results concerning equation (\ref{eq 3.1}), which we state in the sequel.
For the forthcoming potential theoretical notions (like right process or polar set) we refer to the Appendix.

\begin{thmI} \thlabel{thm 3.4}
Assume that $B\equiv 0$ and keep all the notations from \thnameref{thm 3.2}. Then the following assertions hold.

\vspace{0.2cm}
\noindent
(i) There exists a set $M \subset H_0$ such that $H_0\setminus M$ is polar and for each $x\in M$ there exists a pathwise unique continuous strong solution $(X(t,x))_{t\geq 0}$ (in the mild sense) to \eqref{eq 3.1} starting from $x$.
Moreover, if $x\in H_0\setminus M$ then there exists a generalized solution $(X(t,x))_{t\geq 0}$ starting from $x$, in the sense of [Da Prato\slash Zabczyk 2014]. 

\vspace{0.2cm}
\noindent
(ii) There exists a conservative right (strong) Markov process 
$X=(\Omega, \mathcal{F}, (\mathcal{F}_t)_{t\geq 0}, (X(t))_{t\geq 0}, \newline(\theta(t))_{t\geq 0}, (\mathbb{P}^x)_{x\in H_0})$ on $H_0$ (see Definition \ref{defi 4.4} below)
with $|\cdot|$-continuous paths and transition semigroup $(P_t)_{t \geq 0}$.
In particular, 
$$\mathbb{P}^{x}\circ X(\cdot)^{-1}=\mathbb{P}\circ X(\cdot,x)^{-1}\quad \mbox{ for all } x\in H_0.
$$
In addition, the following assertions hold:

\vspace{0.2 cm}
(ii.1) For all $x\in H_0$ we have $\mathbb{P}^{x}(X(t) \in M \mbox{ for all } t>0)=1$, where $M$ is the set from (i).

\vspace{0.2 cm}
(ii.2)  For every $x\in H_0,$ $\mathbb{P}_x$ solves the martingale problem 
for $L$ with test function space 
%
%
\begin{displaymath}
D_0:=\left\{ \varphi \in D(L)\cap C_{b}(H)|\; L\varphi \in 
L^{\infty}(H,\nu) \right\}
\end{displaymath}
and initial condition $x,$ i.e. $\mathbb{P}_x$-a.s. $X(0)=x$ and 
\begin{equation*}
\varphi (X(t))-\varphi (X(0))-\int_{0}^{t}L\varphi (X(s))ds,\;\; t\ge 0,
\end{equation*}
is a continuous $({\cal F}_t)$-martingale for all $\varphi \in D_{0}$.

\vspace{0.2 cm}
(ii.3) If $x\in H_0\setminus M$ and $\varepsilon >0$ is fixed, then under $\mathbb{P}^{x}$ it holds that $(X(t+\varepsilon))_{t\geq 0}$ is a probabilistically weak solution to \eqref{eq 3.1} (in the mild sense) starting from $X(\varepsilon)$.

\vspace{0.2 cm}
\noindent
(iii)  If $x\in H_0\setminus M$ and $\varepsilon >0$ is fixed, then equation \eqref{eq 3.1} has a pathwise unique continuous strong solution with initial distribution $\mathbb{P}^{x}\circ X(\varepsilon)^{-1}$.
\end{thmI}

\begin{rem}
\begin{enumerate}[(i)]
\item Obviously, since $X$ is a Markov process with transition semigroup $(P_t)_{t\geq 0}$, the laws $\mathbb{P}^{x}\circ X^{-1},\; x\in H_0$, are uniquely determined by these two properties.
\item In \thnameref{thm 3.4}, (i), the existence of a normal Markov process on the entire space $H_0$, with $|\cdot|$-continuous paths and transition function $(P_t)_{t\geq 0}$, follows from the existence of a generalized solution in the sense of \cite{DaZa14}, without making use of the general extension results from Section 1.2 (Theorem \ref{thm 2.2} or Corollary \ref{coro 3.1}). 
On the other hand, the fact that the set $E\setminus M$ is never hit by the so obtained process does not directly follow knowing that the latter is merely a generalized solution, and we stress that this property is crucial to prove \thnameref{thm 3.4}, (iii). 
Instead, the fact that $E\setminus M$ is indeed polar is a genuine product of Theorem \ref{thm 2.2}.

The forthcoming case $B\not\equiv 0$ is completely different because generalized solutions are no longer available, and the extension results obtained in Subsection 1.2 are employed in a crucial way even for the construction of the Markov process so that it can start from all points in $H_0$.
\end{enumerate}
\end{rem}

\noindent
{\bf The case when $B$ is bounded.} We keep the same notations as before. 
In order to study equation (\ref{eq 3.1}) when $B$ is bounded, the strategy is to use the Girsanov transformation for all starting points $x\in H_0$, which is not straightforward at all for $x\in H_0\setminus M$ (see Remarks \ref{rem 3.6} and \ref{rem 3.7} below). 
It turns out that in order to handle the 'bad' starting points, it is more suitable to perform the Girsanov transformation on the generalized solution of (\ref{eq 3.1}), with $B\equiv 0$, instead of the right process $X$ given by \thnameref{thm 3.4}, (ii).
So let us fix a cylindrical Wiener process $\widetilde{W}$ on a stochastic basis $(\widetilde{\Omega}, \widetilde{\mathcal{F}}, (\widetilde{\mathcal{F}})_t, \widetilde{\mathbb{P}})$, and take $(X(t,x))_{t\geq 0}$ to be the {\it generalized solution} given by \thnameref{thm 3.4}.
For each $t>0$, we define the Markov kernels
\begin{equation} \label{eq 3.7}
Q_tf(x):=\mathbb{E}^{\widetilde{\mathbb{P}}}\{f(X(t,x)) \rho_t^{x}\}
\end{equation}
for all $f \in b\mathcal{B}(H_0)$ and $x\in H_0$, where
\begin{equation} \label{eq 3.8}
\rho_t^{x}:=e^{\int_0^{t}\langle B(X(s,x))d\widetilde{W}(s) \rangle-\frac{1}{2}\int_0^{t}|B|^{2}(X(s,x))ds}
\end{equation}
are continuous $\widetilde{\mathcal{F}}_t$-martingales by Novikov's condition.
As expected, it turns out that $(Q_t)_{t\geq 0}$ has the semigroup property, but we draw the attention that because $B$ is not continuous, the proof is more delicate; see Proposition \ref{prop 2.4} below.

\begin{rem} \label{rem 3.6}
If $x\in M$, then by an infinite dimensional Girsanov transform (see \cite{LiRo15}, Appendix I) we get that $(X(t,x))_{t\in [0,T]}$ is a solution for equation (\ref{eq 3.1}) starting at $x$ under $d\mathbb{Q}_T^{x}:=\rho_T^{x}\;d\mathbb{P}$, which is unique in law.
However, this transformation can not be applied in the standard way if $x\in H_0 \setminus M$, because we don't know if $(X(t,x))_{t\in [0,T]}$ is a classical solution for (\ref{eq 3.1}), with $B\equiv 0$; however, it becomes a solution after each time $\varepsilon>0$, as in \thnameref{thm 3.4}, (ii.3), so we could apply Girsanov transform to obtain solutions for (\ref{eq 3.1}) after each time $\varepsilon>0$ under some new probabilities $\mathbb{Q}_T^{x,\varepsilon}$; but it is not clear how $(\mathbb{Q}_T^{x,\varepsilon})_{T,\varepsilon}$ can be superposed to obtain a global probability $\mathbb{Q}^{x}$ under which $(X(t,x))_{t\geq 0}$ becomes a continuous normal Markov solution for (\ref{eq 3.1}).
Instead, the kernels $Q_t(\cdot,x)$ given by (\ref{eq 3.7}) make sense for all $x\in H_0$, and our aim is to show that there exists a continuous normal strong Markov process $Y$ on $H_0$, with transition function $(Q_t)_{t\geq 0}$ and Lipschitz strong Feller resolvent.
The strategy adopted in Subsection 2.3 is to make use of Theorem \ref{thm 2.2} (or it's corollaries) and a resolvent formula (see \thnameref{thm 3.4} below), in order to obtain concurrently both the existence and the Lipschitz strong Feller property. 
\end{rem}

Recall that by [DaPFlRoVe 16], Lemma 3.7, if $\alpha\geq 4\pi|B|_\infty^{2}$, then both
\begin{equation}\label{eq 3.17}
\langle B, \nabla U_\alpha \rangle, (I-\langle B, \nabla U_\alpha \rangle)^{-1}:L^{\infty}(H,\nu)\rightarrow L^{\infty}(H,\nu)
\end{equation}
are well defined bounded operators with norms less then $2$, and 
\begin{equation} \label{estimate}
|U_\alpha(I-\langle B, \nabla U_\alpha \rangle)^{-1}f|_{\rm Lip}\leq 2\sqrt{\frac{\pi}{\alpha}}|f|_\infty.
\end{equation}

Further, we denote by $\mathcal{V}:=(V_\alpha)_{\alpha>0}$ the resolvent of kernels associated to $(Q_t)_{t\geq 0}$, i.e. for $\alpha>0$ and $f\in b\mathcal{B}(H_0)$
\begin{equation*}
V_\alpha f(x):=\int_0^{\infty}e^{-\alpha t} Q_tf(x) dt, \quad x\in H_0.
\end{equation*}

Our last main result heavily relies on the following relation between the resolvents $\mathcal{U}$ and $\mathcal{V}$, which could be itself of interest.
We emphasize that (\ref{eq 3.18}) is not hard to prove $\nu$-a.s. by an operatorial approach, as it was done in \cite{DaFlRoVe16}, Proposition 3.8; however, we need it pointwise on the entire $H_0$, and to do this we had to come up with a completely different proof, based on It\^o's formula for Lipschitz functions obtained in Proposition \ref{prop 3.8}.

\begin{thm} \label{thm 3.13}
If $\alpha\geq 4\pi|B|_\infty^{2}$ and $f\in b\mathcal{B}(H_0)$, then
\begin{equation}\label{eq 3.18}
V_\alpha f=U_\alpha(I-\langle B, \nabla U_\alpha \rangle)^{-1}f.
\end{equation}
In particular, $|V_\alpha f|_{\rm Lip}\leq 2\sqrt{\frac{\pi}{\alpha}}|f|_\infty$ and $\mathcal{V}$ is Lipschitz strong Feller.
\end{thm}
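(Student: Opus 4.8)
\medskip
\noindent {\bf Plan of proof.} I would first observe that it suffices to prove the pointwise identity
\[
V_\alpha\bigl(I-\langle B,\nabla U_\alpha\rangle\bigr)f=U_\alpha f\qquad\text{for every }f\in b\mathcal B(H_0).
\]
Granting this, for $\alpha\ge 4\pi|B|_\infty^2$ the operator $I-\langle B,\nabla U_\alpha\rangle$ is a bounded bijection of $L^\infty(H,\nu)$ with bounded inverse (this is the content of \eqref{eq 3.17}, the inverse being the Neumann series $\sum_{n\ge0}(\langle B,\nabla U_\alpha\rangle)^n$), so replacing $f$ by $(I-\langle B,\nabla U_\alpha\rangle)^{-1}f$ in the displayed identity yields precisely \eqref{eq 3.18}. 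Then $|V_\alpha f|_{\rm Lip}\le 2\sqrt{\pi/\alpha}\,|f|_\infty$ is exactly \eqref{estimate} read through \eqref{eq 3.18}, so $V_\alpha$ maps $b\mathcal B(H_0)$ into $Lip_b(H_0)$, i.e.\ $\mathcal V$ is Lipschitz strong Feller. (A minor point is that $(I-\langle B,\nabla U_\alpha\rangle)^{-1}f$ actually lies in $b\mathcal B(H_0)$, not merely in $L^\infty(H,\nu)$: boundedness is \eqref{eq 3.17}, and Borel measurability follows from the Neumann series together with the fact that $\nabla U_\alpha g$ is Borel whenever $U_\alpha g$ is Lipschitz.)

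\medskip
\noindent {\bf Proof of the pointwise identity.} Fix $f\in b\mathcal B(H_0)$ and put $\varphi:=U_\alpha f$. By \eqref{estimate} taken with $B\equiv0$, $\varphi$ is bounded and Lipschitz on $H_0$; since $(L,D(L))$ generates $(P_t)_{t\ge0}$ on $L^2(H,\nu)$ (Theorem \ref{thm 3.2}(i.2)) we have $\varphi\in D(L)$ with $L\varphi=\alpha\varphi-f\in L^\infty(H,\nu)$, so (after a Lipschitz extension to $H$) $\varphi$ belongs to the class $D_0$ of Theorem \ref{thm 3.4}. The key step is to apply the It\^o formula for Lipschitz functions, Proposition \ref{prop 3.8}, to $\varphi$ along the generalized solution $(X^x(t))_{t\ge0}$ of \eqref{eq 3.1} with $B\equiv0$: for every $x\in H_0$,
\[
\varphi(X^x(t))=\varphi(x)+\int_0^tL\varphi(X^x(s))\,ds+\int_0^t\langle\sigma\nabla U_\alpha f(X^x(s)),d\widetilde W(s)\rangle,\qquad t\ge0 .
\]
For $x\in M$ this is the genuine It\^o formula for the strong solution of \eqref{eq 3.1} driven by $\widetilde W$; for $x\in H_0\setminus M$ it follows from Theorem \ref{thm 3.4}(ii.3), applied on $[\varepsilon,\infty)$ for $\varepsilon>0$, upon letting $\varepsilon\downarrow0$ and using that $\nabla U_\alpha f$ and $L\varphi$ are bounded and that $t\mapsto X^x(t)$ is continuous.

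\medskip
Next I would combine this with $d\rho_t^x=\rho_t^x\langle B(X^x(t)),d\widetilde W(t)\rangle$ through It\^o's product rule applied to $t\mapsto e^{-\alpha t}\varphi(X^x(t))\rho_t^x$. Using $L\varphi-\alpha\varphi=-f$ and the cross-variation contribution $\rho_t^x\langle B(X^x(t)),\sigma\nabla U_\alpha f(X^x(t))\rangle\,dt=\rho_t^x\,(\langle B,\nabla U_\alpha\rangle f)(X^x(t))\,dt$ (written for $\sigma={\rm Id}$; for general symmetric $\sigma$ it is incorporated into the pairing as in \eqref{eq 3.17}), one obtains
\[
d\bigl(e^{-\alpha t}\varphi(X^x(t))\rho_t^x\bigr)=e^{-\alpha t}\bigl(-f+\langle B,\nabla U_\alpha\rangle f\bigr)(X^x(t))\,\rho_t^x\,dt+d(\text{local martingale}).
\]
Since $\varphi$ and $\nabla\varphi$ are bounded while $\rho^x$ is a true martingale (Novikov), the local martingale part is a genuine martingale, using the moment bounds on $X^x$ coming from Hypothesis 2. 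Taking $\mathbb E^{\widetilde{\mathbb P}}$, integrating over $[0,T]$ (Fubini), and letting $T\to\infty$ — the boundary term $e^{-\alpha T}\mathbb E^{\widetilde{\mathbb P}}[\varphi(X^x(T))\rho_T^x]=e^{-\alpha T}Q_T\varphi(x)$ tends to $0$ because $|Q_T\varphi|\le|\varphi|_\infty$ — one gets, recalling the definitions \eqref{eq 3.7} and \eqref{eq 3.15},
\[
-U_\alpha f(x)=V_\alpha\bigl(-f+\langle B,\nabla U_\alpha\rangle f\bigr)(x),\qquad x\in H_0 ,
\]
that is, $V_\alpha(I-\langle B,\nabla U_\alpha\rangle)f=U_\alpha f$, as required.

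\medskip
\noindent {\bf The main obstacle.} The crux is the rigorous It\^o formula for the merely Lipschitz (not $C^2$) function $\varphi=U_\alpha f$ on the infinite-dimensional space $H$, i.e.\ Proposition \ref{prop 3.8}: this relies on $\nu\ll N(0,\tfrac12A^{-1})$ (Remark \ref{rem 3.3}(i)), on approximating $\varphi$ by cylindrical functions from the core $\mathcal{F}\mathcal{C}_b^{2}$ and controlling the error terms, and — decisively for the present statement — on transporting the identity from the $\nu$-full set $M$ of good starting points to \emph{every} point of $H_0$. Since the $\nu$-a.e.\ version of \eqref{eq 3.18} is already available operatorially from \cite{DaFlRoVe16}, it is exactly this genuinely pointwise upgrade, carried out via Theorem \ref{thm 3.4}(ii.3) and the stability of the It\^o decomposition as $\varepsilon\downarrow0$, that forces the new argument; everything else (the Neumann-series bookkeeping, the Fubini and dominated-convergence passages, the moment estimates) is routine.
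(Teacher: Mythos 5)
Your argument on $M$ is essentially the paper's: It\^o's formula for $\varphi=U_\alpha f$ (Proposition \ref{prop 3.8}, transported to $(X^x,\widetilde{\mathbb P})$ via uniqueness in law), the product rule with $\rho^x$, Laplace transform, and the Neumann-series inversion give $V_\alpha(I-\langle B,\nabla U_\alpha\rangle)f=U_\alpha f$ on $M$, and the Lipschitz bound then follows from \eqref{estimate}. The gap is in your treatment of $x\in H_0\setminus M$, which is precisely the point of the theorem. You invoke It\^o's formula for $\varphi(X^x(\cdot))$ with stochastic integral against the \emph{fixed} noise $\widetilde W$ for bad $x$, justified by Theorem \ref{thm 3.4}(ii.3) on $[\varepsilon,\infty)$ and a limit $\varepsilon\downarrow0$. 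But (ii.3) is a statement about the canonical process under $\mathbb P^x$ driven by the \emph{reconstructed} noise $W^\varepsilon=\sum_k e_k\beta_k^\varepsilon$, not about the generalized solution $X^x$ under $\widetilde{\mathbb P}$ driven by $\widetilde W$; for $x\in H_0\setminus M$ it is not known that $X^x$ solves the SDE with respect to $\widetilde W$ (this is exactly the obstruction stressed in Remarks \ref{rem 3.6} and \ref{rem 3.7}), and the family $(W^\varepsilon)_{\varepsilon>0}$ does not patch into a single Wiener process as $\varepsilon\downarrow0$ because of the non-integrability of $F_0(X(s))$ near $s=0$. Without identifying the martingale part of $\varphi(X^x(\cdot))$ as an integral against $\widetilde W$, the cross-variation with $\rho^x_t=\exp(\int_0^t\langle B(X^x(s)),d\widetilde W(s)\rangle-\cdots)$ — the term producing $\langle B,\nabla U_\alpha\rangle f$ — cannot be computed, so the product-rule step collapses at bad points. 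One could try to repair this by proving a flow/cocycle property of the generalized solution (so that $X^x(\cdot+\varepsilon)$ is the strong solution from $X^x(\varepsilon)\in M$ driven by the shifted $\widetilde W$), but that is an additional argument you neither state nor prove, and it is not what (ii.3) says.

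The paper avoids this issue entirely: it proves the identity only on $M$ and then extends it to $H_0$ by density, which requires first knowing that both sides are continuous. Since $B$ is merely measurable, $x\mapsto\rho_t^x$ need not be continuous, so the paper approximates $B$ by continuous $B_n$ (Lemma \ref{lem 3.6}), shows the corresponding $Q_t^n$ and $V_\alpha^n$ are Feller, obtains $V_\alpha^n f=U_\alpha(I-\langle B^n,\nabla U_\alpha\rangle)^{-1}f$ on all of $H_0$ by continuity of both sides on the dense set $M$, and then lets $n\to\infty$ (Lemma \ref{lem 3.9}) to transfer the Lipschitz estimate, and hence the identity, to $V_\alpha$. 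This approximation machinery is absent from your proposal, and without it (or a rigorous substitute for the It\^o formula at bad starting points) the passage from $M$ to $H_0$ is not justified.
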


Since $B\not\equiv 0$, the Kolmogorov operator associated to (\ref{eq 3.1}) is now
$$
L_0^{B} \varphi =L_0 \varphi + \langle B , D\varphi \rangle, \quad \varphi \in \mathcal{E}_A(H).$$

We can conclude now:

\begin{thmII} \thlabel{thm 3.16}
There exists a conservative right Markov process $Y=(\Omega, \mathcal{G}, (\mathcal{G}_t)_{t\geq 0}, (Y(t))_{t\geq 0}, \newline(\theta(t))_{t\geq 0}, (\mathbb{Q}^x)_{x\in H_0})$ on $H_0$ with a.s. $|\cdot|$-continuous paths, transition function $(Q_t)_{t \geq 0}$, and Lipschitz strong Feller resolvent $\mathcal{V}$.
In addition, the following assertions hold:
\begin{enumerate}[(i)]
\item $(Q_t)_{t\geq 0}$ extends to a strongly continuous semigroup on $L^{2}(\nu)$, whose infinitesimal generator $(L^{B},D(L^{B}))$ is the closure of $(L_0^{B}, \mathcal{E}_A(H))$; in particular, $D(L^{B})=D(L)$.
\item For every $x\in H_0,$ $\mathbb{Q}^x$ solves the martingale problem 
for $L^{B}$ with the same test function space as in \thnameref{thm 3.4}
and initial condition $x$, i.e. $Y(0)=x$ $\mathbb{Q}^x$-a.s. and under $\mathbb{Q}^x$ 
\begin{equation} \label{eq 3.22}
\varphi (Y(t))-\varphi (Y(0))-\int_{0}^{t}L^{B}\varphi (Y(s))ds,\;\; t\ge 0,
\end{equation}
is a continuous $({\cal F}_t)$-martingale for all $\varphi \in D_{0}$.
\item If $x\in M$, then under $\mathbb{Q}^{x}$, the Markov process $Y$ is a (unique in law) probabilistically weak solution for equation (\ref{eq 3.1}) (in the mild sense), which remains in $M$.
\item If $x\in H_0\setminus M$ and $\varepsilon >0$ is arbitrarily fixed, then under $\mathbb{Q}^{x}$ we have that $(Y(t+\varepsilon))_{t\geq 0}$ is a solution to equation (\ref{eq 3.1}) (in the mild sense) starting from $Y(\varepsilon)$ and remaining in $M$.
\end{enumerate}
\end{thmII}

\begin{rem}
Since on $M$ the process $Y$ is a weak solution for (\ref{eq 3.1}), case $B\not\equiv 0$, one can easily see that It\^o's formula from Proposition \ref{prop 3.8} remains valid for $L$ replaced by $L^{B}$.
\end{rem}

\vspace{0.4 cm}
\noindent
{\bf A concrete example: reaction-diffusion equation.}
First of all, let us mention that since \thnameref{thm 3.4} and II hold whenever Hypotheses 1 and 2 are fulfilled, they apply to all exemples considered in \cite{DaRo02a}, \cite{DaRoWa09}, and \cite{DaFlRoVe16}.
In this paragraph we look at such a concrete example, and in addition, we provide explicit descriptions of $H_0$ and the set of "good" starting points $M$, so that we can fully profit from the polarity of $H_0\setminus M$. 
 
Let $H=L^{2}(0,1)$ and define the operator $A$ by
\begin{equation*}
A=\Delta, \quad D(A)=H^{2}(0,1) \cap H_0^{1}(0,1).
\end{equation*}
Also, for fixed $m\geq 1$, consider the convex functional $V:H\rightarrow (\infty, +\infty]$ given by

$$
 V(x):=\left\{
\begin{aligned}
  &|x|_{L^{m+1}(0,1) }^{m+1} &\mbox{ if } x \in L^{m+1}(0,1)\\
  & \infty  &\mbox{otherwise}
\end{aligned}
\right. 
.
$$
Then $F: D(F)\subset H \rightarrow H$ given by
\begin{equation*}
F(x)=-\nabla V(x)=-(m+1)x|x|^{m-1} \mbox{ for } x\in D(F):= L^{2m}(0,1)
\end{equation*}
 is $m$-dissipative.

Assume that $\sigma=Id$, let $\mu$ be the invariant distribution of the associated Ornstein-Uhlenbeck process, i.e. $\mu:= N(0, \frac{1}{2}(-A)^{-1})$, and set $\nu :=Z e^{-V}\cdot \mu$ , where $Z$ is the normalizing constant $(\int_H e^{-V} \;d\mu)^{-1}$ so that $\nu$ is a probability.

By \cite{DaFlRoVe16}, Section 7, it follows that the so chosen quadruplet $(A ,F, \sigma, \nu)$ satisfies {\bf Hypothesis 1} and {\bf Hypothesis 2}.

Also, note that because $(-A)^{-1}$ is non-degenerate, $supp(\mu)=supp(\nu)=H$, i.e. using the notation from Subsection 1.1, we have that $H_0=H$.

The main result of this paragraph is the following.

\begin{coro}
\begin{enumerate}[(i)]
\item \thnameref{thm 3.4} and \thnameref{thm 3.16} apply for $(A ,F, \sigma, \nu)$ and $H_0=L^{2}(0,1)$.
\item Let $M$ be either $L^{2m}(0,1)$ or $C([0,1])$. 
If $x\in M$ then there exists a unique probabilistically weak solution (in the mild sense) to equation \eqref{eq 3.1}  which starts from $x$ and remains in $M$.
Consequently, if $Y$ is the process given by \thnameref{thm 3.16}, 
then the set $H \setminus M$ is polar, i.e for any $x\in L^{2}(0,1)$
\begin{equation*}
\mathbb{Q}^{x}(\{ Y(t) \in M \mbox{ for all } t>0\})=1.
\end{equation*}
\end{enumerate}
\end{coro}
\begin{proof}
Since the first assertion is clear by the previous discussion, let us prove the second one.
By \cite{Da04}, Theorem 4.8, we have that if $x\in M$ then equation \eqref{eq 3.1} with $B=0$ has a unique strong solution (in the mild sense) which starts from $x$ and which remains in $M$. 
Clearly, the solution is Markov and it's semigroup is precisely $(P_t)_{t\geq 0}$ provided by \thnameref{thm 3.2}.

Further, by Girsanov transformation we have that if $x\in M$, then equation \eqref{eq 3.1} (general $B$) has a (unique) pobabilistically weak solution (in the mild sense) which starts from $x$ and remains in $M$.

Finally, we can apply Theorem \ref{thm 2.2} and Corollary \ref{coro 1.12} (for the resolvent $\mathcal{V}$ in \thnameref{thm 3.16}) to conclude that $H\setminus M$ is polar for $Y$.

\end{proof}

\subsection{A natural extension of Markov processes}
Throughout this subsection we place ourselves in the following general situation: $(E,\mathcal{B})$ is a Lusin measurable space (i.e., it is measurable isomorphic to a Borel subset of a metrizable compact space endowed with the Borel $\sigma$-algebra), and $M \in \mathcal{B}$ is a subset of $E$.
Further, we assume that there exists a right Markov process $X=(\Omega, \mathcal{F}, \mathcal{F}_t, X(t), \theta(t), \mathbb{P}^x)$ with state space $M$ and resolvent family $\mathcal{U}=(U_\alpha)_{\alpha>0}$.
In particular, the process $X$ starts and remains in $M$.
We remark that no topology is a priori given on $E$.
This is because the resolvent $\mathcal{U}$ comes with its own topology, the so called fine topology, and the continuity properties of the paths of $X$ are regarded w.r.t. this topology; the reason is that if a right process has right continuous paths with respect to some given Lusin topology $\tau$ (see Definition \ref{defi 3.3}) whose Borel $\sigma$-algebra is $\mathcal{B}$, then $\tau$ is automatically coarser than the fine topology (see Appendix for details).
For simplicity and in spite of the applications considered in the previous subsection, we assume that the lifetime of the process is infinite; nevertheless, the results of this subsection remain true when $X$ has finite lifetime.
Our aim here is to extend $X$ to a right Markov process $\overline{X}$ on the entire space $E$ in such a way that when $\overline{X}$ starts from $M$ it evolves like $X$, and when it starts from $E\setminus M$ it will immediately enter $M$, from where it continues to evolve like $X$. 
Formally, we have the following definition, which is central for most of the work done in this paper.

\begin{defi} \label{defi 2.1}
We say that a Markov process $\overline{X}=(\overline{\Omega}, \overline{\mathcal{F}}, \overline{\mathcal{F}}_t, \overline{X}(t), \overline{\theta}(t), \overline{\mathbb{P}}^x)$, with state space $E$, 
is a {\it natural extension} of $X$ if the following conditions are fulfilled.
\begin{enumerate}[(i)]
\item $\overline{X}$ is a right process.
\item The processes $(({X}(t))_{t \geq 0}, \mathbb{P}^{x})$ and $((\overline{X}(t))_{t\geq 0}, \overline{\mathbb{P}^{x}})$ are equal in distribution for all $x\in M$;
\item For every $x\in E$ one has $\overline{\mathbb{P}^{x}}$-a.s. $\overline{X}(t) \in M$ for all $t>0$, i.e. $E\setminus M$ is polar w.r.t. $\overline{\mathcal{U}}$.
\end{enumerate}
\end{defi}

As it will be seen in the proofs of the main results from the previous subsection, a direct pathwise extension is not always possible; instead, it is much more at hand to extend the one dimensional distributions of the process. In fact, from a potential theoretical point of view (see Appendix), what we need is an extension of the resolvent, in the following sense.

\begin{defi} \label{defi 2.2}
A sub-Markovian resolvent of kernels $\overline{\mathcal{U}}:=(\overline{U}_\alpha)_{\alpha>0}$ on $E$ is called an extension of $\mathcal{U}$ if:
 \begin{enumerate}[(i)]
 \item $\overline{U}_\alpha (1_{E \setminus M})=0$.
 \item $(\overline{U}_\alpha f)|_M=U_\alpha (f|_M) $ (on $M$) for all $\alpha>0$ and $f \in b\mathcal{B}$.
 \end{enumerate}
\end{defi}

It is easy to see that Definition \ref{defi 2.2} is consistent with Definition \ref{defi 2.1}. More precisely, we have:

\begin{prop} \label{prop 2.3}
If $\overline{X}$ is a natural extension of $X$, then its resolvent denoted by $\overline{\mathcal{U}}$ is an extension of $\mathcal{U}$.
\end{prop}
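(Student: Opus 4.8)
The plan is to unwind the two defining properties in Definition~\ref{defi 2.2} directly from the three conditions in Definition~\ref{defi 2.1}, the only external input being the standard fact that the resolvent of a right process is the Laplace transform of its transition semigroup. So the first step is to record, for the right process $\overline{X}$ with transition function $(\overline{P}_t)_{t\geq 0}$, the representation
\[
\overline{U}_\alpha f(x)=\int_0^\infty e^{-\alpha t}\,\overline{P}_t f(x)\,dt=\mathbb{E}^{\overline{\mathbb{P}}^x}\!\left[\int_0^\infty e^{-\alpha t} f(\overline{X}(t))\,dt\right],\qquad \alpha>0,\ f\in b\mathcal{B},\ x\in E,
\]
where the inner integral makes sense because $t\mapsto \overline{X}(t)$ is (finely) right continuous, hence $t\mapsto f(\overline{X}(t))$ is Borel measurable, and $f$ is bounded; note that $\overline{\mathcal{U}}$ is sub-Markovian simply because $\overline{X}$ is a (right) Markov process. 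Both conditions of Definition~\ref{defi 2.2} are then read off from this formula.

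For condition~(i): fix $x\in E$. By Definition~\ref{defi 2.1}(iii), $\overline{\mathbb{P}}^x$-a.s.\ one has $\overline{X}(t)\in M$ for every $t>0$, so $1_{E\setminus M}(\overline{X}(t))=0$ for Lebesgue-a.e.\ $t\geq 0$ (the only possible exception being $t=0$, a Lebesgue-null set), $\overline{\mathbb{P}}^x$-a.s. Hence $\int_0^\infty e^{-\alpha t} 1_{E\setminus M}(\overline{X}(t))\,dt=0$ $\overline{\mathbb{P}}^x$-a.s., and taking expectations gives $\overline{U}_\alpha(1_{E\setminus M})(x)=0$ for every $x\in E$.

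For condition~(ii): fix $x\in M$ and $f\in b\mathcal{B}$. By Definition~\ref{defi 2.1}(ii) the processes $((X(t))_{t\geq 0},\mathbb{P}^x)$ and $((\overline{X}(t))_{t\geq 0},\overline{\mathbb{P}}^x)$ have the same law, in particular the same one-dimensional marginals, so $\overline{P}_t f(x)=\mathbb{E}^{\mathbb{P}^x}[f(X(t))]$ for every $t\geq 0$; since $X$ has state space $M$, $f(X(t))=(f|_M)(X(t))$, whence $\overline{P}_t f(x)=P_t(f|_M)(x)$. Multiplying by $e^{-\alpha t}$ and integrating over $(0,\infty)$ — the interchange of expectation and time integral being justified by Fubini's theorem and the boundedness of $f$ — yields $(\overline{U}_\alpha f)(x)=U_\alpha(f|_M)(x)$, which is exactly~(ii) on $M$.

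The whole argument is essentially a direct translation of the definitions, so I do not expect a genuine obstacle; the one point worth a word of care is the passage from the distributional identity of the two processes to the identity of the Laplace-transformed additive functionals, which is why I invoke Fubini's theorem to reduce everything to the one-dimensional distributions (so that equality of finite-dimensional distributions suffices and no statement about laws on path space is needed).
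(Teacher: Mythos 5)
Your proof is correct and is exactly the definition-unwinding argument the paper has in mind: the paper states Proposition \ref{prop 2.3} as an immediate consistency check (``It is easy to see\dots'') and gives no separate proof, and your two steps --- killing $\overline{U}_\alpha(1_{E\setminus M})$ via the polarity condition (iii) plus the Lebesgue-negligibility of $\{t=0\}$, and identifying $\overline{U}_\alpha f$ with $U_\alpha(f|_M)$ on $M$ via equality of one-dimensional marginals and Fubini --- are precisely the intended details.
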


The aim of this subsection is to investigate the converse of Proposition \ref{prop 2.3}, namely: if $\overline{\mathcal{U}}$ is an extension of $\mathcal{U}$, under which conditions $\overline{\mathcal{U}}$ is the resolvent of a natural extension $\overline{X}$ of $X$?
To answer this question, we need to consider the following condition which is a version of the assumption (H1) -- (H3) from \cite{BeRo11a}, page 846.

\vspace{0.2 cm}
\noindent
{\bf (H)}  There exists a min-stable convex cone $\mathcal{C}\subset bp\mathcal{B}$ such that
\begin{enumerate}[(i)]
\item $1 \in \mathcal{C}$ and $\sigma(\mathcal{C})=\mathcal{B}$.
\item For some (hence all) $\beta >0$ we have $\overline{U}_\beta f \in \mathcal{C}$  for all $f \in \mathcal{C}$.
\item $\lim\limits_{\alpha \to \infty}\alpha \overline{U}_\alpha f=f$ point-wise on $E$ for all $f \in \mathcal{C}$.
\end{enumerate}

We are now in the position to present the main result of this subsection.
\begin{thm} \label{thm 2.2}
Let $\overline{\mathcal{U}}$ be an extension of $\mathcal{U}$.  Then there exists a natural extension $\overline{X}$ of $X$, with resolvent $\overline{\mathcal{U}}$, if and only if (H) is satisfied.
\end{thm}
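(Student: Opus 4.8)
The assertion is an equivalence. The implication ``a natural extension with resolvent $\overline{\mathcal U}$ exists $\Rightarrow$ (H)'' is the softer half: given such an $\overline X$, I would exhibit the cone $\mathcal C$ by the usual Ray-cone construction. Fix a countable family $(k_n)\subset bp\mathcal B$ which separates the points of $E$ and generates $\mathcal B$ (possible since $E$ is Lusin), put $h_n:=\overline U_1 k_n$, and let $\mathcal C$ be the smallest min-stable convex cone in $bp\mathcal B$ containing $\{1\}\cup\{h_n:n\ge1\}$ and stable under $\overline U_\gamma$ for every $\gamma>0$. All members of $\mathcal C$ are bounded and finely continuous (the generators are $1$-excessive, and $\min$, convex combinations and the kernels $\overline U_\gamma$ preserve bounded fine continuity), so $\alpha\overline U_\alpha f\to f$ pointwise on $E$ for $f\in\mathcal C$ by the right continuity of the paths of $\overline X$; this is (H)(iii), while (H)(ii) holds by construction and (H)(i) holds because $\{\overline U_\alpha g:\alpha>0,\ g\in bp\mathcal B\}$ separates the points of $E$ (two points with the same $\overline U_\alpha$-values have the same transition probabilities, hence coincide on letting $t\downarrow0$ and using normality).

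For the converse, assume (H). The point of (H) is precisely that, being a variant of (H1)--(H3) of \cite{BeRo11a}, it makes the Ray-type completion of that paper applicable to $\overline{\mathcal U}$ on $E$: it furnishes a Lusin topology on $E$ generating $\mathcal B$, with respect to which there is a right Markov process $\overline X=(\overline\Omega,\overline{\mathcal F},\overline{\mathcal F}_t,\overline X(t),\overline\theta(t),\overline{\mathbb P}^x)$ on $E$ having c\`adl\`ag paths and resolvent $\overline{\mathcal U}$. This is Definition \ref{defi 2.1}(i), so it remains to identify $\overline X$ on $M$ with $X$ and to prove that $E\setminus M$ is polar for $\overline X$.

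By Definition \ref{defi 2.2}(ii) we have $\overline{\mathcal U}|_M=\mathcal U$, hence, by uniqueness of the Laplace transform, the transition function of $\overline X$ satisfies $\overline P_t(x,\cdot)=P^X_t(x,\cdot)$ for all $x\in M$ and $t\ge0$, where $P^X_t$ is the transition function of $X$; since the right-hand side is carried by $M$, this gives $\overline P_t(x,E\setminus M)=0$ for every such $x$ and $t$. Upgrading this to ``$M$ is absorbing for $\overline X$'' is carried out by the fine-topological arguments recalled in the Appendix: the part of $\overline X$ on $M$ is a right process on $M$ with resolvent $\mathcal U$, hence, by the uniqueness of a right process determined by its resolvent, it is equivalent to $X$, and in particular it never leaves $M$. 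This yields Definition \ref{defi 2.1}(ii). Now Definition \ref{defi 2.2}(i) gives $\overline U_\alpha 1_{E\setminus M}\equiv 0$ on $E$; since $M$ is absorbing, for $x\in E\setminus M$ the path of $\overline X$ stays in $E\setminus M$ on an initial (possibly degenerate) interval $[0,\tau)$ and lies in $M$ afterwards, so $0=\overline U_\alpha 1_{E\setminus M}(x)\ge\mathbb E^{x}\big[\alpha^{-1}(1-e^{-\alpha\tau})\big]$, forcing $\tau=0$ $\overline{\mathbb P}^x$-a.s. Thus $\overline{\mathbb P}^x(\overline X(t)\in M\text{ for all }t>0)=1$ for every $x\in E$, i.e.\ $E\setminus M$ is polar, which is Definition \ref{defi 2.1}(iii). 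Hence $\overline X$ satisfies conditions (i)--(iii) of Definition \ref{defi 2.1}, and it is a natural extension of $X$ with resolvent $\overline{\mathcal U}$.

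The step I expect to be the main obstacle is the passage from the coincidence of the resolvents on $M$ to the statement that $M$ is absorbing for $\overline X$ (equivalently, that the abstractly given pair $(E,\overline{\mathcal U})$ embeds compatibly into the intrinsic Ray completion of $(M,\mathcal U)$ built from $\mathcal C|_M$, so that the adjoined points $E\setminus M$ are forced among its polar boundary points). This is exactly the juncture at which Definition \ref{defi 2.2} and hypothesis (H) must be combined with the completion machinery of \cite{BeRo11a} and \cite{BeBo04a}; the remaining verifications are routine. (Note also that (H)(iii) in the first half of the proof silently uses the right continuity of the paths of $\overline X$.)
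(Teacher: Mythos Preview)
Your argument contains a genuine gap in the converse direction. You write that (H), being a variant of (H1)--(H3) of \cite{BeRo11a}, ``furnishes a Lusin topology on $E$ \ldots\ with respect to which there is a right Markov process $\overline X$ on $E$ having c\`adl\`ag paths and resolvent $\overline{\mathcal U}$.'' This is precisely what is \emph{not} true: condition (H) by itself is only equivalent to (H') (Remark~\ref{rem 4.2}), and (H') is necessary but not sufficient for the existence of a right process on $E$. The paper's own Corollary~\ref{rem 3.16} exhibits a strong Feller resolvent on $\mathbb R\setminus\{0\}$ satisfying (H') for which no right process exists. So you cannot first produce $\overline X$ on $E$ and then verify absorption and polarity afterwards; the existence of $\overline X$ on $E$ is the whole difficulty.

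The paper resolves this by working not with the saturation of $E$ but with the saturation $M_1$ of $M$. Since $\mathcal U$ \emph{is} the resolvent of a right process on $M$, Theorem~\ref{thm 4.15} guarantees both a right process $X^1$ on $M_1$ and that $M_1\setminus M$ is polar. The substantive work (steps (1)--(4) in the paper's proof) is to show that the map $x\mapsto\delta_x\circ\overline U_\beta$ embeds $E$ measurably into $M_1$ with $\overline{\mathcal U}=\mathcal U^1|_E$; condition (H) is used here to verify that all points of $E$ are non-branch points and that a Ray cone for $\mathcal U_\beta$ extends to one separating the points of $E$. Once $E\subset M_1$ is established, $M_1\setminus E\subset M_1\setminus M$ is polar, so $E$ is absorbing in $M_1$ and the restriction of $X^1$ to $E$ is the desired $\overline X$; polarity of $E\setminus M$ then comes for free as a subset of $M_1\setminus M$. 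You correctly identify this embedding step as the crux in your final paragraph, but your proof is organised as if it could be postponed, whereas in fact nothing before it goes through.
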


A consequence of (the proof of) the previous theorem is that any natural extension of $X$, if exists, is unique in distribution:
\begin{coro} \label{coro 1.12}
Any extension $\overline{\mathcal{U}}$ of $\mathcal{U}$ which satisfies (H), is uniquely determined. 
In particular, any natural extension of $X$ is unique in distribution.
\end{coro}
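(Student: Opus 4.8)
\emph{Plan.} The plan is to reduce everything to the uniqueness of the extended resolvent, and then to identify that resolvent on $E\setminus M$ as an entrance law of $\mathcal U$ which is pinned down by condition (H)(iii). First I would record that, by Proposition \ref{prop 2.3} and Theorem \ref{thm 2.2}, a natural extension $\overline X$ exists precisely when $\mathcal U$ admits an extension $\overline{\mathcal U}$ satisfying (H), and then $\overline X$ is determined in law by $\overline{\mathcal U}$ together with the initial distribution (a right Markov process is determined by its transition resolvent and initial law, via the strong Markov property and consistency of the finite-dimensional distributions). Hence it is enough to show: if $\overline{\mathcal U}$ and $\overline{\mathcal U}'$ are two extensions of $\mathcal U$ satisfying (H) — say with witnessing cones $\mathcal C$ and $\mathcal C'$ — then $\overline U_\alpha=\overline U'_\alpha$ for all $\alpha>0$.

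The first step is elementary. Since $\overline U_\alpha(1_{E\setminus M})=0$ and $\overline U_\alpha$ is a (sub-Markovian) kernel, we get $\overline U_\alpha(x,E\setminus M)=0$ for every $x\in E$, and likewise for $\overline U'_\alpha$; by Definition \ref{defi 2.2}(ii) both $\overline U_\alpha$ and $\overline U'_\alpha$ coincide with $U_\alpha$ on $M$. Consequently $\overline U_\alpha f$ and $\overline U'_\alpha f$ agree on $M$ and are both of the form $x\mapsto\int_M f\,d(\text{subprobability on }M)$; the only thing left to identify is the subprobability $\mu^x_\alpha:=\overline U_\alpha(x,\cdot)$ on $M$, for $x\in E\setminus M$.

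Next I would fix $x\in E\setminus M$ and show that $(\mu^x_\alpha)_{\alpha>0}$ is an entrance law for $\mathcal U$: plugging $f\in b\mathcal B$ into the resolvent equation for $\overline{\mathcal U}$, evaluating at $x$, and using the previous step to rewrite $\overline U_\beta f$ as $U_\beta(f|_M)$ on $M$, one obtains
\[
\int_M f\,d\mu^x_\alpha-\int_M f\,d\mu^x_\beta=(\beta-\alpha)\int_M U_\beta(f|_M)\,d\mu^x_\alpha\qquad(\alpha,\beta>0),
\]
and the same for $(\mu'^x_\alpha)_{\alpha>0}:=(\overline U'_\alpha(x,\cdot))_{\alpha>0}$. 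Then I would invoke the Ray-type completion of $(M,\mathcal U)$ that underlies the proof of Theorem \ref{thm 2.2} (cf. \cite{BeRo11a}, \cite{BeBo04a}): entrance laws of $\mathcal U$ are classified by their boundary behaviour along the Ray topology, and condition (H)(iii) — which reads $\alpha\int_M f\,d\mu^x_\alpha\to f(x)$ for every $f\in\mathcal C$ — says exactly that $(\mu^x_\alpha)$ is the entrance law that ``enters at $x$''. Since $\mathcal C$ is a min-stable convex cone with $\sigma(\mathcal C)=\mathcal B$, it separates the points of the Lusin space $E$, so this property determines $(\mu^x_\alpha)$ uniquely among the entrance laws of $\mathcal U$; applying the same to $(\mu'^x_\alpha)$ (with $\mathcal C'$, which also separates points) forces $\mu^x_\alpha=\mu'^x_\alpha$ for all $\alpha$. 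Putting the three steps together gives $\overline{\mathcal U}=\overline{\mathcal U}'$, hence the asserted uniqueness in distribution.

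The main obstacle is exactly the last step: one must verify that (H)(iii) genuinely singles out a single entrance law, and that the identification of $x\in E\setminus M$ with a boundary point is independent of which cone witnesses (H). The clean way I would organize this is to pass to a common Ray cone $\mathcal R$ containing $\mathcal C\cup\mathcal C'\cup\{1\}$ and closed under $\overline U_\beta$, $\overline U'_\beta$, $\wedge$, $+$ and multiplication by positive rationals (countable after uniform closure), form the associated Ray compactification $\widehat E$ of $E$, and compare both extensions with the single Ray resolvent carried by $\widehat E$: the polarity of $E\setminus M$ (Definition \ref{defi 2.1}(iii)) forces each $x\in E\setminus M$ into the Ray closure of $M$, and its Ray coordinates $(g(x))_{g\in\mathcal R}$ determine the corresponding boundary point — hence the kernel $\overline U_\alpha(x,\cdot)$ — unambiguously. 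A technical point to be checked along the way is that (H)(iii) for $\overline{\mathcal U}$ propagates to the enlarged cone $\mathcal R$; this uses that every function occurring in $\mathcal R$ restricts on $M$ to a $\mathcal U$-excessive (more precisely, resolvent) function, for which $\alpha U_\alpha(\cdot)\to(\cdot)$ holds on $M$, combined with dominated convergence for the outer applications of $\overline U_\beta$.
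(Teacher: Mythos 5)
Your reduction to the uniqueness of the kernels $\overline U_\alpha(x,\cdot)$, $x\in E\setminus M$, viewed as entrance laws of $\mathcal U$ carried by $M$, is the right frame and is essentially the paper's route as well: the paper identifies each $x$ with the $\mathcal U_\beta$-excessive measure $\delta_x\circ\overline U_\beta$, i.e.\ with a point of the saturation $M_1$, and concludes via step (3) of the proof of Theorem \ref{thm 2.2} that both extensions are restrictions of the canonical resolvent $\mathcal U^1$ on $M_1$. Your first three steps (the kernels do not charge $E\setminus M$, they agree with $U_\alpha$ on $M$, and they satisfy the entrance-law identity) are correct.

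The gap is exactly where you locate it, and your proposed repair does not close it. The assertion that (H)(iii) ``determines $(\mu^x_\alpha)$ uniquely among the entrance laws of $\mathcal U$'' is unproved: (H)(iii) only gives $\alpha\mu^x_\alpha(f|_M)\to f(x)$ for $f$ in the witnessing cone, and the two extensions come with a priori different cones $\mathcal C$, $\mathcal C'$, so the two limit conditions concern disjoint families of test functions and cannot be played against each other. Your fix --- passing to a common cone $\mathcal R\supset\mathcal C\cup\mathcal C'$ --- would require (H)(iii) to hold for $\mathcal R$ with respect to \emph{both} resolvents \emph{at the points of $E\setminus M$}; the justification you sketch (elements of $\mathcal R$ restrict on $M$ to resolvent functions, for which $\alpha U_\alpha\to\mathrm{id}$) only yields the convergence on $M$, where the two resolvents already coincide and nothing is at stake. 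At $x\in E\setminus M$, the convergence $\alpha\overline U_\alpha g(x)\to g(x)$ for $g\in\mathcal C'$ is precisely the statement that would force $\overline U_\alpha(x,\cdot)=\overline U'_\alpha(x,\cdot)$, and it does not follow from anything you have established. That the gap is substantive can be seen on the example $M=(0,\infty)\times\{1,2\}$ with uniform translation to the right on each copy and $E=M\cup\{*\}$: letting $*$ enter at $0+$ in copy $i$ gives, for $i=1$ and $i=2$, two distinct Markovian extensions of $\mathcal U$, each satisfying (H) with the min-stable cone of nonnegative bounded functions that are right-continuous on each copy and whose limit at $0+$ along copy $i$ exists and equals their value at $*$. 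Hence uniqueness cannot be extracted from (H) alone without some compatibility between the witnessing cones (for instance a canonically fixed cone, as in Corollary \ref{prop 2.5} where $\mathcal C$ consists of the nonnegative bounded Lipschitz functions); any complete proof must supply that extra input --- a point on which, it should be said, the paper's own one-line argument is equally silent, since the embedding $x\mapsto\delta_x\circ\overline U_\beta$ of $E$ into $M_1$ itself depends on the extension.
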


\vspace{0.2 cm}
{\noindent \bf Further results concerning natural topologies.}
Until the next paragraph on general remarks about condition (H), we assume that the latter is fulfilled, so that $\overline{\mathcal{U}}$ is the resolvent of a right Markov process $\overline{X}$ on $E$, which is a natural extension of $X$, as in Theorem \ref{thm 2.2}.
In addition, we suppose that we are given a Lusin topology $\tau$ on $E$, whose Borel $\sigma$-algebra coincides with $\mathcal{B}$, such that $X$ has a.s. right continuous paths w.r.t. $\tau$.
By Definition \ref{defi 4.1}, Theorem \ref{thm 4.6}, and Corollary \ref{coro 4.10} from Appendix, it means that $\tau$ is a natural topology on $M$ w.r.t. $\mathcal{U}$.
 
Unfortunately, we can not say that $\tau$ remains a natural topology on $E$ w.r.t. $\overline{\mathcal{U}}$, without further assumptions.
Nevertheless, at least on $M$, we can show that $\overline{X}$ inherits the same path-continuity properties as $X$.
In fact, because $E\setminus M$ is polar, we can say a little bit more:

\begin{prop}  \label{prop 2.9}
The following assertions hold.

\begin{enumerate}[(i)]
\item The paths $(0,\infty)\ni t\mapsto\overline{X}(t)$ are $\overline{\mathbb{P}}^{x}$-a.s. right continuous w.r.t. $\tau$, for all $x\in E$; in addition, if $x\in M$, then the paths are continuous in $0$ w.r.t. $\tau$.
\item If $X$ has paths with left limits in $M$ (or $E$) w.r.t. $\tau$, then so does $\overline{X}$.
\item If $X$ has continuous trajectories on $M$, then the paths $ (0,\infty)\ni t\mapsto \overline{X}(t)$ are $\overline{\mathbb{P}}^{x}$-a.s. continuous w.r.t. $\tau$, for all $x\in E$.
\end{enumerate}
\end{prop}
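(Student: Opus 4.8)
The assertions for a starting point $x\in M$ are immediate from Definition~\ref{defi 2.1}(ii): there $\bigl((\overline{X}(t))_{t\geq 0},\overline{\mathbb{P}}^{x}\bigr)$ has the same distribution as $\bigl((X(t))_{t\geq 0},\mathbb{P}^{x}\bigr)$, so it inherits all the $\mathbb{P}^{x}$-a.s.\ path properties of $X$ with respect to $\tau$ --- right continuity on $[0,\infty)$ (which in particular gives continuity at $0$), existence of $\tau$-left limits, and $\tau$-continuity. Hence the only real point is to establish the three statements for $x\in E\setminus M$, and my plan is to reduce them, after an arbitrarily small lapse of time, to the already-known behaviour of $X$ inside $M$, exploiting that $E\setminus M$ is polar for $\overline{\mathcal{U}}$ (Definition~\ref{defi 2.1}(iii)).

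First I would fix $x\in E\setminus M$ and $\varepsilon>0$ and consider the shifted process $\bigl(\overline{X}(t+\varepsilon)\bigr)_{t\geq 0}$. Since $E\setminus M$ is polar, $\overline{X}(\varepsilon)\in M$ $\overline{\mathbb{P}}^{x}$-a.s.; applying the (simple) Markov property of the right process $\overline{X}$ at time $\varepsilon$ and then Definition~\ref{defi 2.1}(ii), the conditional law of $\bigl(\overline{X}(t+\varepsilon)\bigr)_{t\geq 0}$ given $\overline{\mathcal{F}}_{\varepsilon}$ is $\overline{\mathbb{P}}^{\overline{X}(\varepsilon)}$, which, $\overline{X}(\varepsilon)$ lying in $M$, coincides with the law of $X$ started at $\overline{X}(\varepsilon)$. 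By hypothesis $X$ has, for \emph{every} $y\in M$ and $\mathbb{P}^{y}$-a.s., $\tau$-right continuous paths on $[0,\infty)$ (respectively $\tau$-left limits in $M$, or in $E$, and $\tau$-continuous paths, under the hypotheses of (ii), resp.\ (iii)); integrating this a.s.\ statement against the law of $\overline{X}(\varepsilon)$, a probability measure carried by $M$, yields that $\bigl(\overline{X}(t+\varepsilon)\bigr)_{t\geq 0}$ enjoys $\overline{\mathbb{P}}^{x}$-a.s.\ the same $\tau$-path regularity on $[0,\infty)$.

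Next I would let $\varepsilon\downarrow 0$ along $\varepsilon=1/n$ and intersect the countably many $\overline{\mathbb{P}}^{x}$-null exceptional sets. Since $\bigcup_{n\geq 1}[1/n,\infty)=(0,\infty)$, and since a $\tau$-left limit at a fixed $t_{0}>0$ is already detected for every $n$ with $1/n<t_{0}$, one obtains that $(0,\infty)\ni t\mapsto\overline{X}(t)$ is $\overline{\mathbb{P}}^{x}$-a.s.\ $\tau$-right continuous and, under the respective hypotheses, has $\tau$-left limits in $M$ (resp.\ in $E$) and is $\tau$-continuous; this settles (i)--(iii) for $x\in E\setminus M$, (iii) also being a formal consequence of (i) and (ii). For (i) a shorter route is available too: $\overline{X}$ is a right process with resolvent $\overline{\mathcal{U}}$, so its paths are right continuous on $[0,\infty)$ for the fine topology of $\overline{\mathcal{U}}$; on $(0,\infty)$ they lie in $M$ $\overline{\mathbb{P}}^{x}$-a.s.; and, because $\overline{U}_{\alpha}f|_{M}=U_{\alpha}(f|_{M})$ by Definition~\ref{defi 2.2}(ii), the trace on $M$ of the fine topology of $\overline{\mathcal{U}}$ equals the fine topology of $\mathcal{U}$ on $M$, which is finer than the natural topology $\tau$ --- so $\tau$-right continuity follows on $(0,\infty)$, and the same comparison near $t=0$ (the path then lying in $M$, together with $x\in M$) recovers $\tau$-continuity at $0$ when $x\in M$.

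The argument carries essentially no conceptual difficulty; the care needed is of a measure-theoretic bookkeeping nature, and that is where I expect the only mild obstacle to lie. On the one hand, the events ``$t\mapsto\overline{X}(t+\varepsilon)$ is $\tau$-right continuous / admits $\tau$-left limits / is $\tau$-continuous on $[0,\infty)$'' must be recognised as $\overline{\mathcal{F}}$-measurable, which holds because $\tau$, being Lusin, is second countable, so they are expressible through countably many rational times and a countable base of $\tau$; this also legitimates transferring a.s.\ path properties between processes equal in distribution. On the other hand --- the point I would be most careful about --- the $\tau$-path regularity of $X$ has to be available for \emph{all} starting points of $M$, not merely quasi-every one: this is precisely the meaning of $\tau$ being a natural topology on $M$ with respect to $\mathcal{U}$ (Definition~\ref{defi 4.1}, Theorem~\ref{thm 4.6}, Corollary~\ref{coro 4.10}), and it is exactly what permits the a.s.\ statement for $X$ to be integrated against the law of the random point $\overline{X}(\varepsilon)$ without creating an exceptional set, thus keeping the quantifier ``for all $x\in E$'' in the conclusions.
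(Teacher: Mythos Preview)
Your proposal is correct and follows essentially the same approach as the paper: for $x\in M$ the result comes from equality in distribution with $X$, and for $x\in E\setminus M$ one applies the Markov property at a positive time $\varepsilon$ (the paper writes $s$), uses that $\overline{X}(\varepsilon)\in M$ a.s.\ by polarity, and then lets $\varepsilon\downarrow 0$. The only minor difference is that the paper handles the measurability of the path events in (ii)--(iii) by passing to the product space $E^{D}$ over a countable dense $D\subset\mathbb{R}_{+}$ and invoking \cite{DeMe78} for the measurability of the c\`adl\`ag and continuous path sets, whereas you sketch this via second countability of the Lusin topology $\tau$; also, your aside that ``(iii) is a formal consequence of (i) and (ii)'' is not quite right as stated (c\`adl\`ag does not imply continuous), but it is harmless since your Markov-property argument already yields (iii) directly.
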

 
In view of Proposition \ref{prop 2.9},  if $x\in E\setminus M$, the $\overline{\mathbb{P}}^{x}$-a.s. $\tau$-continuity in $0$ of the paths of $\overline{X}$ is a more delicate issue, which requires more information about how the fine topology is related to the given topology $\tau$.  Our next aim is to discuss some general conditions which allow us to tackle this issue. However, because of their generality, in certain concrete applications like those studied in Subsection 1.1 (where $\tau$ is the $|\cdot|$-topology), one has to use specific tools in order to show that $\tau$ is a natural topology.
Throughout, $C_b(E)$ denotes the space of real valued, bounded and $\tau$-continuous functions on $E$.

First of all, under a minimal extra condition, we can say a bit more on how $\tau$ can be related, in general, to the fine topology associated to $\overline{\mathcal{U}}$.

\begin{prop} \label{prop 2.10}
In addition to (H), assume that there exists a vector lattice $\mathcal{C}' \subset b\mathcal{B}$ possessing a countable subset which separates the points of $E$, such that $\overline{U}_\alpha \mathcal{C}' \subset C_b(E)$ for all $\alpha >0$. Then one can choose a natural (in fact, Ray) topology $\tau_0$ on $E$ (w.r.t. $\overline{\mathcal{U}})$ which is smaller than the given topology $\tau $.
\end{prop}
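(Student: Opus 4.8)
The plan is to obtain $\tau_{0}$ as the Ray topology associated with a Ray cone for $\overline{\mathcal{U}}$ whose members are all $\tau$-continuous; the extra hypothesis on $\mathcal{C}'$ is tailored exactly so that such a cone is available. Recall from the Appendix and \cite{BeBo04a}, \cite{BeRo11a} that, since (H) holds, $\overline{\mathcal{U}}$ is already the resolvent of a right process $\overline{X}$ on $E$ by Theorem \ref{thm 2.2}, and that any countable min-stable convex cone $\mathcal{R}\subset b\mathcal{B}$ which contains the positive constants, is stable under $\overline{U}_{q}$ for $q\in\mathbb{Q}_{>0}$ and under the excessive regularization $u\mapsto\widehat{u}=\sup_{n}n\overline{U}_{q+n}u$, separates the points of $E$, and generates $\mathcal{B}$, is a Ray cone; the coarsest topology $\tau_{0}$ rendering every member of $\mathcal{R}$ continuous is then a Ray --- hence natural --- topology for $\overline{\mathcal{U}}$, with $\mathcal{B}(\tau_{0})=\sigma(\mathcal{R})=\mathcal{B}$. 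So it suffices to produce such an $\mathcal{R}$ with $\mathcal{R}\subset C_{b}(E)$, for then $\tau_{0}\subseteq\tau$ automatically.

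To build $\mathcal{R}$, I would first fix a countable point-separating family $\{f_{n}\}_{n\ge1}\subset\mathcal{C}'$; since an injective Borel map out of the Lusin space $E$ is a Borel isomorphism onto its image, already $\sigma(\{f_{n}\})=\mathcal{B}$. Using the vector-lattice operations and scalar multiples inside $\mathcal{C}'$ we may assume $\sup_{n}\|f_{n}\|_{\infty}\le1$. Let $\mathcal{R}$ be the smallest subset of $b\mathcal{B}$ that contains the constants and all $\overline{U}_{q}f_{n}$ ($q\in\mathbb{Q}_{>0}$, $n\ge1$) and is stable under finite sums, multiplication by nonnegative rationals, finite minima, the maps $u\mapsto\overline{U}_{q}u$, and $u\mapsto\widehat{u}$; then $\mathcal{R}$ is a countable min-stable convex cone with all the closure properties above. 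That $\mathcal{R}$ separates the points of $E$ (whence, by the same Borel-isomorphism argument, $\sigma(\mathcal{R})=\mathcal{B}$) is soft: distinct $x,y$ cannot satisfy $\overline{U}_{q}(x,\cdot)=\overline{U}_{q}(y,\cdot)$ for all $q>0$, since this would force $P_{t}(x,\cdot)=P_{t}(y,\cdot)$ for a.e.\ $t>0$, hence for all $t>0$ by right-continuity of the paths of $\overline{X}$, hence $f(x)=f(y)$ for every bounded finely continuous $f$ on letting $t\downarrow0$, contradicting that the fine topology of a right process separates points; and since the $f_{n}$ generate $\mathcal{B}$, standard arguments then show that the functions $\overline{U}_{q}f_{n}$ already separate $x$ from $y$.

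The crux is to check that $\mathcal{R}\subset C_{b}(E)$. The generators $\overline{U}_{q}f_{n}$ lie in $C_{b}(E)$ precisely because $\overline{U}_{\alpha}\mathcal{C}'\subset C_{b}(E)$; the constants are continuous in any topology (and their resolvent images $\overline{U}_{q}1$ are handled routinely); and finite sums, nonnegative-rational multiples and finite minima preserve $C_{b}(E)$. For the maps $u\mapsto\overline{U}_{q}u$ one invokes the resolvent equation: the linear span of $\{1\}\cup\{\overline{U}_{p}f_{n}:p\in\mathbb{Q}_{>0},\,n\ge1\}$ is invariant under every $\overline{U}_{p}$ (as $\overline{U}_{p}\overline{U}_{p'}=(p'-p)^{-1}(\overline{U}_{p}-\overline{U}_{p'})$ for $p\ne p'$), so iterated resolvents of the generators, and hence the $\overline{U}_{q}$-images of all min-stable convex combinations formed from them, remain in $C_{b}(E)$. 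Finally $\widehat{u}=\sup_{n}n\overline{U}_{q+n}u$ is an increasing limit of $C_{b}(E)$-functions, hence $\tau$-lower semicontinuous, so the sets $\{\widehat{u}>c\}$ are $\tau$-open --- which is all that is needed for the topology generated by $\mathcal{R}$ to stay inside $\tau$. Combining everything, $\tau_{0}\subseteq\tau$ and, by the first paragraph, $\tau_{0}$ is a natural (in fact Ray) topology on $E$ with respect to $\overline{\mathcal{U}}$, as claimed.

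The step I expect to be the real obstacle is this last one: one must be careful that none of the operations entering the definition of a Ray cone --- most delicately the excessive regularization $u\mapsto\widehat{u}$ together with $\overline{U}_{q}$-stability --- ever leaves the class of $\tau$-continuous (for $\widehat{u}$: merely $\tau$-lower semicontinuous) functions. This is exactly the point at which one uses the precise form of the hypothesis, $\overline{U}_{\alpha}\mathcal{C}'\subset C_{b}(E)$ for \emph{all} $\alpha>0$ rather than only $\overline{U}_{\alpha}\mathcal{C}'\subset b\mathcal{B}$: it lets the resolvent equation carry $\tau$-continuity through iterated resolvents. By comparison, the measure-theoretic ingredients --- separation of points and generation of $\mathcal{B}$ --- are routine once one knows, via Theorem \ref{thm 2.2}, that $\overline{X}$ is already a right process and that $\sigma(\mathcal{C}')=\mathcal{B}$.
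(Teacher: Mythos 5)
Your overall strategy is the paper's: start from a countable point-separating subset $\mathcal{A}_0\subset\mathcal{C}'$ and run the standard Ray-cone construction (Remark \ref{rem 4.6}, (ii)) so that every generator $\overline{U}_q f$, $f\in\mathcal{A}_0$, lies in $C_b(E)$, then take $\tau_0$ to be the Ray topology of the resulting cone. However, the step you yourself single out as the crux is handled incorrectly, and as written it breaks the conclusion. A Ray topology is the \emph{initial} topology of the cone, i.e.\ the coarsest topology making every member of $\overline{\mathcal{R}}$ continuous; for $\tau_0\subseteq\tau$ you therefore need every member of $\overline{\mathcal{R}}$ to be $\tau$-continuous, so that both $\{u>c\}$ \emph{and} $\{u<c\}$ are $\tau$-open. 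Knowing only that $\widehat{u}=\sup_n n\overline{U}_{q+n}u$ is $\tau$-lower semicontinuous gives you the sets $\{\widehat{u}>c\}$ but not $\{\widehat{u}<c\}$, so the claim that lower semicontinuity ``is all that is needed for the topology generated by $\mathcal{R}$ to stay inside $\tau$'' is false: if some $\widehat{u}$ were genuinely discontinuous for $\tau$, then $\tau_0\not\subseteq\tau$ (and, by Definition \ref{defi 4.5}, a Ray cone must consist of excessive functions which the Ray topology renders continuous, so you cannot simply discard the offending sets either).

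The repair is to observe that the regularization operation you added is superfluous, which is why it does not appear in the construction of Remark \ref{rem 4.6}, (ii). Under (H) all points of $E$ are non-branch points for $\overline{\mathcal{U}}_\beta$ (this is exactly assertion (2) in the proof of Theorem \ref{thm 2.2}, via Lemma 1.2.10 of \cite{BeBo04a}), hence $\inf(u,v)=\widehat{\inf(u,v)}$ for $u,v\in\mathcal{E}(\overline{\mathcal{U}}_\beta)$: the infimum of two excessive functions is already excessive, with no regularization needed. Consequently the cone generated from $\overline{U}_\beta(\mathcal{A}_0)\cup\mathbb{Q}_+$ by finite sums, positive rational multiples, finite infima, the operators $\overline{U}_\alpha$ ($\alpha\in\mathbb{Q}_+$) and $\overline{U}_\beta((\cdot-\cdot)_+)$, followed by closure in the supremum norm, already consists of $\overline{\mathcal{U}}_\beta$-excessive functions, and each of these operations (including the uniform closure) manifestly preserves $\tau$-continuity once the generators are in $C_b(E)$ --- which is where the hypothesis $\overline{U}_\alpha\mathcal{C}'\subset C_b(E)$ for all $\alpha>0$ enters, exactly as you use it via the resolvent equation. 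With the regularization step deleted your argument closes and coincides with the paper's one-line proof; as written, the justification of the decisive step is wrong.
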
 
 
Concerning the case when $\tau$ itself is a natural topology, we make first the following observation.

\begin{rem} \label{rem 2.6}
Assume that $\mathop{\lim}\limits_{\alpha \to \infty} \|\alpha \overline{U}_\alpha f-f\|_\infty=0$ for all $f \in \widetilde{\mathcal{C}}$, where $\widetilde{\mathcal{C}} \subset C_b(E)$ is such that it generates the topology $\tau$ on $E$.
Then $\tau$ is a natural topology on $E$. 
\end{rem}

Unfortunately, the uniform convergence assumption from Remark \ref{rem 2.6} is difficult to check in many situations, even in finite dimensions.
We therefore turn our attention to a more practical situation:

\vspace{0.2 cm}
{\noindent \bf Assumption.}  There exists a positive measure $\nu$ on $E$, with full support, and $\overline{\mathcal{U}}$ regarded as a family of operators on $L^{p}(\nu)$ for some $1\leq p < \infty$, is the resolvent of an $m$-dissipative operator $({\sf L}, D({\sf L}))$ on $L^{p}(\nu)$. 
In addition, suppose that $\overline{\mathcal{U}}$ is $L^{p}$-strong Feller, i.e. $\overline{U}_\alpha (L^{p}(\nu)) \subset C(E)$ for one (hence all) $\alpha > 0$.

\begin{rem}
Under the above assumption, since $D(L)=\overline{U}_\alpha (L^{p}(E))$, $\alpha>0$, it is clear that any element $f\in D(L)$ has a continuous version on $E$.
\end{rem}

\begin{prop} \label{prop 2.7}
Consider that the above assumption is fulfilled. Then each function $f\in C(E)\cap D(L)$ is finely continuous. 
In particular, if $\mathcal{A}$ is a countable subset of functions from $C(E)\cap D({\sf L})$ which separates the points of $E$, then the (initial) topology generated by $\mathcal{A}$ is a natural topology.
\end{prop}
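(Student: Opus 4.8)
The plan is to reduce the first assertion to the classical fact that every $\alpha$-excessive function for $\overline{\mathcal{U}}$ is finely continuous with respect to the fine topology of the right process $\overline{X}$ supplied by (H), and then to upgrade the identity $f=\overline{U}_\alpha\big((\alpha-\mathsf{L})f\big)$ from "$\nu$-a.e." to "everywhere on $E$" by exploiting the $L^{p}$-strong Feller property together with the fact that $\operatorname{supp}\nu=E$. The "in particular" statement will then follow from the fact that a countable point‑separating family of finely continuous Borel functions on the Lusin space $(E,\mathcal B)$ generates a natural topology.

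First I would fix $f\in C(E)\cap D(\mathsf{L})$ and $\alpha>0$. Since $(\mathsf{L},D(\mathsf{L}))$ is $m$-dissipative on $L^{p}(\nu)$ with resolvent $\overline{\mathcal U}$, the map $\alpha-\mathsf{L}$ is a bijection from $D(\mathsf{L})$ onto $L^{p}(\nu)$ with inverse $\overline{U}_\alpha$, so $f=\overline{U}_\alpha g$ in $L^{p}(\nu)$ with $g:=(\alpha-\mathsf{L})f\in L^{p}(\nu)$; I split $g=g^{+}-g^{-}$ with $g^{\pm}\in L^{p}(\nu)$, $g^{\pm}\ge 0$. By the $L^{p}$-strong Feller property, $\overline{U}_\alpha g^{\pm}\in C(E)$, in particular they are everywhere finite. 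Next, for bounded $h\ge 0$ the resolvent equation gives, for every $\beta>0$,
\begin{equation*}
\beta\,\overline{U}_{\alpha+\beta}\big(\overline{U}_\alpha h\big)=\overline{U}_\alpha h-\overline{U}_{\alpha+\beta}h\;\le\;\overline{U}_\alpha h ,
\end{equation*}
and the left-hand side increases to $\overline{U}_\alpha h$ as $\beta\to\infty$, so $\overline{U}_\alpha h$ is $\alpha$-excessive; applying this to $h=g^{\pm}\wedge n$ and letting $n\to\infty$ (monotone convergence for the kernels $\overline{U}_\alpha$, $\overline{U}_{\alpha+\beta}$) exhibits $\overline{U}_\alpha g^{\pm}$ as an increasing limit of $\alpha$-excessive functions, hence as $\alpha$-excessive. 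Since every $\alpha$-excessive function for the resolvent of a right process is finely continuous (e.g.\ $e^{-\alpha t}\overline{U}_\alpha g^{\pm}(\overline X(t))$ is a right-continuous supermartingale), each $\overline{U}_\alpha g^{\pm}$ is finely continuous on $E$.

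Then I would identify: both $f$ and $\overline{U}_\alpha g^{+}-\overline{U}_\alpha g^{-}$ lie in $C(E)$ and coincide $\nu$-a.e., hence they coincide on all of $E$ because $\operatorname{supp}\nu=E$; thus $f$ is a difference of two everywhere finite finely continuous functions, so $f$ is finely continuous, which is the first claim. For the "in particular" part, let $\mathcal A\subset C(E)\cap D(\mathsf{L})$ be countable and point‑separating. By what was just proved each $h\in\mathcal A$ is finely continuous, so the initial topology $\tau_{\mathcal A}$ generated by $\mathcal A$ is coarser than the fine topology; moreover, being a countable point-separating family of Borel functions on the Lusin space $(E,\mathcal B)$, $\mathcal A$ generates a Lusin topology with $\sigma(\mathcal A)=\mathcal B$. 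Finally, since $\mathcal A$ is countable with finely continuous elements, for every $x\in E$ one has $\overline{\mathbb P}^{x}$-a.s.\ that $t\mapsto h(\overline X(t))$ is right continuous simultaneously for all $h\in\mathcal A$, whence $t\mapsto\overline X(t)$ is $\overline{\mathbb P}^{x}$-a.s.\ right continuous with respect to $\tau_{\mathcal A}$; by Definition~\ref{defi 4.1}, Theorem~\ref{thm 4.6} and Corollary~\ref{coro 4.10} from the Appendix this means exactly that $\tau_{\mathcal A}$ is a natural topology.

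I expect the main obstacle to be the identification step: passing from the $L^{p}(\nu)$-identity $f=\overline{U}_\alpha g$ to an everywhere identity whose right-hand side is manifestly finely continuous. This is precisely where one must use both the $L^{p}$-strong Feller property (to know that $\overline{U}_\alpha g^{\pm}$ are honest, everywhere finite, continuous functions rather than mere a.e.-classes possibly taking the value $+\infty$) and the full support of $\nu$. One should also keep in mind throughout that "finely continuous" refers to the fine topology of the \emph{extended} process $\overline X$ on $E$ granted by (H), and that the last step merely repackages right continuity of paths as the defining property of a natural topology via the cited results of the Appendix.
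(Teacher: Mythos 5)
Your proof is correct and follows essentially the same route as the paper's: write $f=\overline{U}_\alpha g$ with $g=(\alpha-\mathsf{L})f\in L^p(\nu)$, split $g=g^{+}-g^{-}$, use the $L^p$-strong Feller property to get that $\overline{U}_\alpha g^{\pm}$ are everywhere finite, continuous, $\alpha$-excessive (hence finely continuous) functions, and then use full support of $\nu$ together with continuity to upgrade $f=\overline{U}_\alpha g^{+}-\overline{U}_\alpha g^{-}$ from $\nu$-a.e.\ to everywhere. The only difference is that you supply details the paper leaves implicit (the truncation/monotone-convergence argument for excessiveness and the path-regularity repackaging in the last step), all of which are sound.
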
 
 
\noindent
{\bf General remarks on condition (H).}

\begin{rem} \label{rem 2.3}
Let $\overline{\mathcal{U}}$ be an extension of $\mathcal{U}$.
\begin{enumerate}[(i)]
\item By Lusin theorem, if $\mathcal{C} \subset b\mathcal{B}$ contains a countable subset which separates the points of $E$, then $\sigma(C)=\mathcal{B}$; in this case, condition (H), (i) reduces to $1\in \mathcal{C}$.
\item If $\mathcal{C} \subset C_b(E)$ is a min-stable convex cone and $\overline{\mathcal{U}}$ is Feller, then condition (H), (ii) becomes:
for some $\beta >0$ one has $U_\beta(\mathcal{C}|_M)\subset \mathcal{C}|_M$.
\item If (H) holds, $\overline{U}_\alpha(\mathcal{C})\subset
C_b(E)$ for all $\alpha>0$, and $\alpha U_\alpha(f|_M)\rightarrow f$ uniformly on $M$ when $\alpha \to \infty$ for all $f\in \mathcal{C}$, then $\mathcal{C} \subset C_b(E)$.
Indeed, observe first that if a sequence $(x_n)_n \subset M$ is converging to $x\in E$, then $\mathop{\lim}\limits_n f(x_n)=f(x)$ if $f\in \mathcal{C}$:
$$
|f(x)-f(x_n)|\leq |f(x)-f_k(x)|+|f_k(x)-f_k(x_n)|+ |f_k(x_n)-f(x_n)|,
$$
where $f_k:=k\overline{U}_k(f|_M)$, and by the uniformly convergence assumption, there exists $k_0\in\mathbb{N}$ such that if $k\geq k_0$ then $|f_k(x_n)-f(x_n)|<\varepsilon$ for all $n$.
To conclude that $f$ is a continuous function on $E=\overline{M}$, we can argue now as in the proof of Remark 1.1 from \cite{Be11}.
\item Let $\mathcal{A}$ be the closure in the supremum norm of the linear space spanned by $b\mathcal{E}(\overline{\mathcal{U}}_\beta)$. 
If $\mathcal{C} \subset \mathcal{A}$, then condition (H), (iii) holds.
Note that $\mathcal{C} \subset \mathcal{A}$ provided that 
\newline $\mathop{\lim}\limits_{\alpha \to \infty}\|\alpha \overline{U_\alpha (f|_M)}-f\|_{\infty} =0$ for all $f\in \mathcal{C}$.
\end{enumerate}
\end{rem}

\begin{lem} \label{lem 2.4}
Assume that $\overline{\mathcal{U}}$ is an extension of $\mathcal{U}$, and that (H),(ii) is satisfied. If $\mathcal{C} \subset C_b(E)$ and there exist  $\beta \geq 0$ such that for each $f\in \mathcal{C}$ the family $(\alpha \overline{U}_{\alpha+\beta}f)_{\alpha>0}$ is equicontinuous on $E$, then (H),(iii) holds.
\end{lem}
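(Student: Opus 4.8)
The plan is to reduce \textbf{(H)(iii)} to one pointwise convergence statement and then propagate it from the ``good'' set $M$ to all of $E$ by means of the equicontinuity hypothesis. Fix $f\in\mathcal{C}$ and put $g_\alpha:=\alpha\overline{U}_{\alpha+\beta}f$ for $\alpha>0$; since $\overline{\mathcal{U}}$ is sub-Markovian, $\|g_\alpha\|_\infty\le\tfrac{\alpha}{\alpha+\beta}\|f\|_\infty\le\|f\|_\infty$, so $(g_\alpha)_{\alpha>0}$ is a uniformly bounded and (by assumption) equicontinuous family on $E$. The first step is to dispose of the gap between $\overline{U}_\alpha$ and $\overline{U}_{\alpha+\beta}$: the resolvent equation $\overline{U}_\alpha-\overline{U}_{\alpha+\beta}=\beta\,\overline{U}_\alpha\overline{U}_{\alpha+\beta}$ gives $\alpha\overline{U}_\alpha f=g_\alpha+\alpha\beta\,\overline{U}_\alpha\overline{U}_{\alpha+\beta}f$, and the remainder satisfies $\|\alpha\beta\,\overline{U}_\alpha\overline{U}_{\alpha+\beta}f\|_\infty\le\tfrac{\beta}{\alpha+\beta}\|f\|_\infty\to0$; hence it suffices to prove $g_\alpha\to f$ pointwise on $E$ (for $\beta=0$ there is nothing to reduce).

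Next I would establish convergence on $M$. By Definition \ref{defi 2.2}(ii) one has $g_\alpha|_M=\alpha U_{\alpha+\beta}(f|_M)$, and a change of variables yields, for $x\in M$,
\begin{equation*}
g_\alpha(x)=\frac{\alpha}{\alpha+\beta}\int_0^\infty e^{-s}\,P_{s/(\alpha+\beta)}(f|_M)(x)\,ds ,
\end{equation*}
where $(P_t)$ is the transition function of $X$. Since $X$ is a right process, its paths are right continuous with respect to the fine topology of $\mathcal{U}$, and since the topology underlying $C_b(E)$ is coarser than the fine topology (cf.\ the Appendix), $f|_M$ is finely continuous; therefore $P_t(f|_M)(x)=\mathbb{E}^x[(f|_M)(X(t))]\to f(x)$ as $t\downarrow0$ by bounded convergence, and a second application of dominated convergence in the integral above gives $g_\alpha(x)\to f(x)$ for every $x\in M$.

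Finally I would propagate to all of $E$. Fix $x_0\in E$ and $\varepsilon>0$; by equicontinuity of $(g_\alpha)_\alpha$ at $x_0$ and continuity of $f$ at $x_0$ there is a neighbourhood $V\ni x_0$ with $|g_\alpha(y)-g_\alpha(x_0)|<\varepsilon$ for all $y\in V$, all $\alpha>0$, and $|f(y)-f(x_0)|<\varepsilon$ for all $y\in V$. Since $M$ is dense in $E$ (in the situations of interest $E$ is the closure of $M$, e.g.\ $E=H_0$ in Subsection 1.1), one may choose $y\in V\cap M$ and then
\begin{equation*}
|g_\alpha(x_0)-f(x_0)|\le|g_\alpha(x_0)-g_\alpha(y)|+|g_\alpha(y)-f(y)|+|f(y)-f(x_0)|<2\varepsilon+|g_\alpha(y)-f(y)| ;
\end{equation*}
letting $\alpha\to\infty$ and using the previous step, $\limsup_\alpha|g_\alpha(x_0)-f(x_0)|\le2\varepsilon$, so $g_\alpha(x_0)\to f(x_0)$. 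Combined with the first step this is exactly \textbf{(H)(iii)}.

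The routine parts are the resolvent-equation estimate and the two passages to the limit under the integral. The step that requires genuine care, and the main obstacle, is the convergence on $M$ together with the density used in the last step: one must make sure that $C_b(E)$-continuity of $f$ really upgrades to \emph{fine} continuity of $f|_M$, so that $P_t(f|_M)(x)\to f|_M(x)$ holds for \emph{every} $x\in M$ (not merely $\nu$-a.e.), and that each point of $E\setminus M$ is accessible from $M$ in the topology at hand. Both hold in the present framework, since a Lusin topology making $X$ right continuous is automatically coarser than the fine topology, and in the applications of Subsection 1.1 the space $E$ is the closure of $M$.
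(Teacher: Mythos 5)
Your proof is correct and takes essentially the same route as the paper's: pointwise convergence of $\alpha \overline{U}_{\alpha+\beta}f$ to $f$ on $M$ from the right-process property (which you justify in more detail via fine continuity of $f|_M$), the three-epsilon propagation to $E\setminus M$ using equicontinuity together with the density of $M$ in $E$, and the resolvent equation to handle the shift by $\beta$. The only difference is cosmetic: you perform the resolvent-equation reduction at the outset, whereas the paper invokes it at the end.
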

 
A typical situation when (H) is automatically fulfilled is as follows; see Subsection 1.1, \thnameref{thm 3.2}.

\begin{coro} \label{prop 2.5}
Assume that $(E,d)$ is a Polish metric space and let $Lip_b(E)$ denote the space of all bounded Lipschitz functions on $E$. 
Suppose that $\overline{\mathcal{U}}_\alpha (Lip_b(E))\subset Lip_b(E)$ for some (hence all) $\alpha >0$, and that there exist  $\beta \geq 0$ such that for each $f\in Lip_b(E)$, the family $(\alpha \overline{U}_{\alpha+\beta}f)_{\alpha>0}$ is equicontinuous on $E$.

Then (H) holds with $\mathcal{C}:=\{f\in Lip_b(E): f\geq 0\}$. 
\end{coro}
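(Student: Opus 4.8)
The plan is to check the three items of condition (H) directly for the cone $\mathcal{C}:=\{f\in Lip_b(E):f\geq 0\}$, reserving Lemma \ref{lem 2.4} for the one substantial point. First note that $\mathcal{C}\subset bp\mathcal{B}$ and that $\mathcal{C}$ is a min-stable convex cone: if $f,g\in\mathcal{C}$ and $\lambda\geq 0$, then $f+g$, $\lambda f$ and $f\wedge g$ are again bounded, nonnegative and Lipschitz, using for the last one that $|(f\wedge g)(x)-(f\wedge g)(y)|\leq\max\{|f(x)-f(y)|,|g(x)-g(y)|\}$, so that $|f\wedge g|_{\rm Lip}\leq\max\{|f|_{\rm Lip},|g|_{\rm Lip}\}$.

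For (H)(i): clearly $1\in\mathcal{C}$, and since $(E,d)$ is Polish, hence separable, we may pick a countable dense set $(x_n)_n\subset E$ and set $g_n:=1\wedge d(\cdot,x_n)\in\mathcal{C}$; the family $(g_n)_n$ separates the points of $E$, so by Remark \ref{rem 2.3}(i) we get $\sigma(\mathcal{C})=\mathcal{B}$. For (H)(ii): fix $\beta>0$ and $f\in\mathcal{C}$; by hypothesis $\overline{U}_\beta f\in Lip_b(E)$, while sub-Markovianity and $f\geq 0$ give $0\leq\overline{U}_\beta f\leq\|f\|_\infty\,\overline{U}_\beta 1\leq\beta^{-1}\|f\|_\infty$, whence $\overline{U}_\beta f\in\mathcal{C}$; that this holds for one, equivalently all, $\beta>0$ follows from the resolvent equation together with $\overline{U}_\alpha(Lip_b(E))\subset Lip_b(E)$ for every $\alpha>0$.

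It remains to verify (H)(iii), i.e. $\alpha\overline{U}_\alpha f\to f$ pointwise on all of $E$ for every $f\in\mathcal{C}$, and this is the only step that truly uses the equicontinuity hypothesis. I would obtain it from Lemma \ref{lem 2.4}: $\overline{\mathcal{U}}$ is, as throughout this subsection, an extension of $\mathcal{U}$ in the sense of Definition \ref{defi 2.2}; (H)(ii) has just been checked; $\mathcal{C}\subset C_b(E)$ because Lipschitz functions are continuous; and, with the $\beta\geq 0$ furnished by the hypothesis, the family $(\alpha\overline{U}_{\alpha+\beta}f)_{\alpha>0}$ is equicontinuous on $E$ for each $f\in\mathcal{C}$. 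Lemma \ref{lem 2.4} then gives (H)(iii), and together with (H)(i)--(ii) this proves that (H) holds with the stated $\mathcal{C}$. I expect (H)(iii) to be the real crux: the delicate feature is that the resolvent convergence $\alpha\overline{U}_\alpha f(x)\to f(x)$ has to be produced also at the added points $x\in E\setminus M$, where no path regularity of $X$ is a priori available — passing to the equicontinuous family $\overline{U}_{\alpha+\beta}$ is harmless since $\|\alpha\overline{U}_\alpha f-\alpha\overline{U}_{\alpha+\beta}f\|_\infty\leq\frac{\beta}{\alpha+\beta}\|f\|_\infty\to 0$, and it is precisely the equicontinuity that propagates the convergence from $M$ to all of $E$; absent Lemma \ref{lem 2.4} this propagation would be the main obstacle.
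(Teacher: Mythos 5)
Your proposal is correct and follows essentially the same route as the paper: verify directly that $\mathcal{C}$ is a min-stable convex cone in $bp\mathcal{B}$ satisfying (H)(i)--(ii), then obtain (H)(iii) from Lemma \ref{lem 2.4} via the equicontinuity hypothesis. The only difference is that you spell out the details (separating points by $1\wedge d(\cdot,x_n)$, positivity and Lipschitz preservation under $\overline{U}_\beta$) that the paper dismisses as "clear."
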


\subsection{A typical application of Theorem \ref{thm 2.2}}
By \cite{MaRo92} (see \cite{FuOsTa11} for the symmetric case, but also \cite{St99b} for a generalized theory), if $(\mathcal{E}, D(\mathcal{E}))$ is a (quasi-regular) Dirichlet form on $L^{2}(E, \nu)$, then one can always associate a standard (in particular, right) process $X$, whose transition function, regarded on $L^{2}(E, \nu)$, coincides with the $C_0$-semigroup generated by $\mathcal{E}$. 
A specific issue of the powerful Dirichlet forms approach of constructing Markov processes is that, in general, there is a $\nu$-exceptional set which has to be removed from the space $E$ in order to construct a right process which solves the martingale problem. In order to obtain a process on the entire space, a typical artificial extension is performed: if the process starts from the exceptional set, it is forced to remain stuck. 
This is usually called the {\it trivial} extension, and it is clearly in contrast with our natural extension considered in the previous subsection, where if the process starts from a "bad" point, it will immediately return to the set of "good" points, from where it follows the dynamic governed by the infinitesimal generator. 
So a natural question arises: given a preferred sub-Markovian resolvent of kernels $\mathcal{U}$ on $E$, which is associated (up to $\nu$-classes) to a quasi-regular Dirichlet form $\mathcal{E}$, can we construct a right process with resolvent $\mathcal{U}$, without further modifications?

Motivated by (and forgetting of) the aforementioned context of Dirichlet forms, we turn now to the following general situation: let $\mathcal{U}$ be a sub-Markovian resolvent of kernels on a Lusin topological space $(E, \tau)$, fulfilling the following assumptions:
\begin{enumerate}[(i)]
\item Condition (H) holds with $\mathcal{U}$ instead of $\overline{ \mathcal{U}}$.
\item There exists a reference measure $\nu$ on $\mathcal{B}$ for $\mathcal{U}$, i.e. if $\nu (A)=0$ then $U(1_A)\equiv 0$ for all $A\in \mathcal{B}$.
\item There exists a countable measure separating subset $\mathcal{A}_0 \subset b\mathcal{B}$ such that  ${U}_\alpha (\mathcal{A}_0) \subset C_b(E)$ for all $\alpha >0$.
\end{enumerate}

\begin{defi}
A  sub-Markovian resolvent of kernels $\widetilde{\mathcal{U}}:=(\widetilde{U}_\alpha)_{\alpha>0}$ on $E$ is called a {\it $\nu$-version} of $\mathcal{U}$ if ${U}_\alpha f=\widetilde{U}_\alpha f$ $\nu$-a.e. for all $f\in b\mathcal{B}$, $\alpha >0$.
\end{defi}

Our next result shows that once we know that a $\nu$-version of $\mathcal{U}$ (e.g. one obtained by a trivial extension) has associated a right process, then so does $\mathcal{U}$.

\begin{coro} \label{coro 3.1}
Assume that $\mathcal{U}$ has a $\nu$-version $\widetilde{\mathcal{U}}$ which has associated a right Markov process $\widetilde{X}$ on $E$ with a.s. $\tau$-right continuous paths.
Then there exists a right Markov process $X$ on $E$, with resolvent $\mathcal{U}$.
In fact, $X$ is the natural extension of the restriction of $\widetilde{X}$ from $E$ to a smaller set $M\subset E$.
In particular, Propositions \ref{prop 2.9} and \ref{prop 2.10} apply.
\end{coro}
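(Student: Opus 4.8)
The plan is to apply Theorem~\ref{thm 2.2} to the resolvent $\mathcal U$ itself, regarded as an extension of its restriction to a suitable subset $M\subset E$ with $E\setminus M$ $\nu$-negligible, the process on $M$ being obtained by restricting the given process $\widetilde X$. Note that once $\nu(E\setminus M)=0$, hypothesis (ii) forces $U_\alpha(1_{E\setminus M})\equiv 0$ for all $\alpha>0$, so every kernel $U_\alpha(x,\cdot)$, $x\in E$, is carried by $M$; hence $\mathcal U$ restricts to a resolvent of kernels $\mathcal U|_M$ on $M$, and $\mathcal U$ is automatically an extension of $\mathcal U|_M$ in the sense of Definition~\ref{defi 2.2}.

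The construction of $M$ goes as follows. Using that $\mathcal A_0$ is countable and measure separating (so that $\sigma(\mathcal A_0)=\mathcal B$, cf.\ Remark~\ref{rem 2.3}(i)), and that $\widetilde{\mathcal U}$ is a $\nu$-version of $\mathcal U$, one produces a $\nu$-negligible, universally measurable set $N_0$ such that $U_\beta(x,\cdot)=\widetilde U_\beta(x,\cdot)$ as (sub-probability) measures on $E$ for every $x\in E\setminus N_0$ and every $\beta>0$: first obtain the equality tested against a countable point-separating algebra, for rational $\beta$, off a $\nu$-null set; extend to all $\beta$ via the resolvent equation and the monotonicity of $\beta\mapsto U_\beta f$; and conclude by a functional monotone class argument. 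Since $E\setminus N_0$ need not be invariant for $\widetilde X$, I would pass to the $\widetilde X$-saturation $N:=\{x\in E:\ \mathbb P^x(\widetilde X(t)\in N_0\ \text{for some}\ t\ge 0)>0\}\supseteq N_0$ and set $M:=E\setminus N$, which is $\widetilde X$-absorbing. Assuming (see below) that $\nu(N)=0$, we get as above that $\mathcal U|_M$ is a resolvent of kernels on $M$ and that $\mathcal U$ extends it; moreover, $M$ being $\widetilde X$-absorbing and $\widetilde U_\alpha$ coinciding with $U_\alpha$ on $M$, the restriction $X_M$ of $\widetilde X$ to $M$ is a right Markov process on $M$ whose resolvent is $\mathcal U|_M$ and whose paths are a.s.\ $\tau$-right continuous.

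It remains to invoke Theorem~\ref{thm 2.2}: hypothesis (i) is exactly condition (H) for $\overline{\mathcal U}:=\mathcal U$, so $X_M$ admits a natural extension $X$ to $E$ with resolvent $\mathcal U$, which is the right Markov process asserted; Corollary~\ref{coro 1.12} gives its uniqueness in distribution. Finally, because $\tau$ makes the paths of $X_M$ a.s.\ right continuous, $\tau$ is a natural topology on $M$ with respect to $\mathcal U|_M$, so Proposition~\ref{prop 2.9} applies; and hypothesis (iii) provides the countable point-separating family whose $U_\alpha$-images lie in $C_b(E)$, which is what Proposition~\ref{prop 2.10} requires.

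The step I expect to be the main obstacle is the claim $\nu(N)=0$, equivalently that $N_0$ --- the set on which $\mathcal U$ and $\widetilde{\mathcal U}$ genuinely disagree --- is $\widetilde X$-nearly polar. The elementary estimate only yields $\widetilde U_\beta 1_{N_0}=U_\beta 1_{N_0}=0$ $\nu$-a.e., which tells us that $\widetilde X$ spends no positive amount of time in $N_0$ from $\nu$-a.e.\ starting point, but not that it never enters $N_0$ at all; closing this gap is where the regularity hypotheses are essential (without them one can, for instance, modify the resolvent of a one-dimensional diffusion at a single non-polar point while preserving a reference measure, and then the conclusion fails). The route I would take is to exploit the potential theory carried by $\mathcal U$ through the reference measure $\nu$: the set where $\mathcal U$ and $\widetilde{\mathcal U}$ differ is $\mathcal U$-nearly polar --- two finely continuous functions that agree $\nu$-a.e.\ agree off a nearly polar set --- and, the réduites of $\mathcal U$ and of $\widetilde{\mathcal U}$ agreeing $\nu$-a.e., it is $\widetilde X$-nearly polar as well; hence so is its $\widetilde X$-saturation $N$, and therefore $\nu(N)=0$.
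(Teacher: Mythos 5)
Your architecture coincides with the paper's: locate the disagreement set by testing against the countable family $\mathcal{A}_0$ and rational $\alpha$, enlarge it to an $\widetilde{X}$-absorbing, $\nu$-inessential set $M^c$, restrict $\widetilde{X}$ to $M$ to obtain a right process with resolvent $\mathcal{U}|_M$ (extended to all $\alpha$ by the resolvent equation), and feed this into Theorem \ref{thm 2.2} via hypothesis (i). You also correctly isolate the crux: proving that the disagreement set is nearly polar for $\widetilde{X}$, for which $\widetilde{U}_\beta 1_{N_0}=0$ $\nu$-a.e.\ (zero occupation time) is indeed not enough.

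However, your resolution of that crux does not work as stated. You first declare the disagreement set ``$\mathcal{U}$-nearly polar'' and then transfer this to $\widetilde{X}$ on the grounds that ``the r\'eduites of $\mathcal{U}$ and of $\widetilde{\mathcal{U}}$ agree $\nu$-a.e.'' Two objections. First, the principle that two finely continuous functions agreeing $\nu$-a.e.\ agree off a nearly polar set requires both functions to be finely continuous \emph{for the same resolvent}, and its proof (a finely open set, once entered, is occupied for a strictly positive amount of time) is probabilistic, hence needs a right process for the resolvent in question; $\mathcal{U}$ has none yet --- that is precisely what the corollary is constructing --- so ``$\mathcal{U}$-nearly polar'' carries no usable content here, and in any case $\widetilde{U}_\alpha f$ has no reason to be $\mathcal{U}$-finely continuous. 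Second, the r\'eduites of the two resolvents are pointwise infima over two \emph{different} cones of excessive functions; their $\nu$-a.e.\ coincidence is not something you can simply assert. The gap closes by working entirely inside the potential theory of $\widetilde{\mathcal{U}}$, where the process exists: by hypothesis (iii), $U_\alpha f\in C_b(E)$ for $f\in\mathcal{A}_0$, and since $\widetilde{X}$ has a.s.\ $\tau$-right continuous paths, $\tau$ is a natural topology for $\widetilde{\mathcal{U}}$, so $U_\alpha f$ is $\widetilde{\mathcal{U}}$-finely continuous; $\widetilde{U}_\alpha f$ is so automatically; hence $V:=[U_\alpha f\neq\widetilde{U}_\alpha f]$ is $\widetilde{\mathcal{U}}$-finely open and $\nu$-negligible, therefore $\nu$-polar w.r.t.\ $\widetilde{X}$, and $M:=[\widetilde{R}_1^{V}1=0]$ (with $V$ the union over the countable family) is the required $\nu$-inessential-complement set. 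This is exactly where hypothesis (iii) and the $\tau$-right continuity of $\widetilde{X}$ enter the proof; your write-up invokes neither at the decisive point.
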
 
 
\section{Proofs of the results from Section 1}

\subsection{Proofs of the results from Subsection 1.2}
\begin{proof}[Proof of Theorem \ref{thm 2.2}]
For the direct implication, we use only that $\overline{X}$ is a right process with resolvent $\overline{\mathcal{U}}$, because by Definition \ref{defi 4.5} and Remark \ref{rem 4.6}, (ii) from Appendix, we can take $\mathcal{C}$  to be a Ray cone w.r.t. $\overline{\mathcal{U}}$.

Assume now that condition (H) is satisfied. One can see that the restriction to $M$ of any $\overline{\mathcal{U}}_\beta$-excessive function is $\mathcal{U}_\beta$-excessive and the converse also holds:

\vspace{0.2 cm}
\noindent
(1)  \hspace{0.2 cm} Any $\mathcal{U}_\beta$-excessive funtion $w$ has a unique extention $\overline{w}$ to $E$ which is $\overline{\mathcal{U}}_\beta$-excessive, hence
 $\mathcal{E}(\mathcal{U}_\beta)=\mathcal{E}(\overline{\mathcal{U}}_\beta)|_M$.

\vspace{0.2 cm}
\noindent
Indeed, let $w\in \mathcal{E}(\mathcal{U}_\beta)$. We may assume that $w \leq 1$. 
Then the function $w_1$ extending $w$ with the value $1$ on $E \setminus M$ is $\overline{\mathcal{U}}_\beta$-supermedian and the $\overline{\mathcal{U}}_\beta$
-excessive regularization of $w_1$ is the $\overline{\mathcal{U}}_\beta$-excessive function extending $w$ from $M$ to $E$; see Appendix, right after Definition \ref{defi 4.1}.

We prove now that $\overline{\mathcal{U}}$ satisfies the assumption (H') from the beginning of Appendix, i.e.:

\vspace{0.2 cm}
\noindent
(2) \hspace{0.2 cm} $\sigma(\mathcal{E}(\overline{\mathcal{U}}_\beta))=\mathcal{B}$ and all the points of $E$ are non-branch points with respect to $\overline{\mathcal{U}}_\beta$,

\noindent
that is $1 \in \mathcal{E}(\overline{\mathcal{U}}_\beta)$ and if $u,v \in \mathcal{E}(\overline{\mathcal{U}}_\beta)$ then $\inf(u,v)=\widehat{\inf(u,v)}$.

The proof of (2) follows essentially the proof of Proposition 2.1 from \cite{BeRo11a}.
Indeed, first of all note that $\sigma (\overline{U}_\beta(b\mathcal{B}))\subset \sigma (\mathcal{E}(\overline{\mathcal{U}}_\beta)) \subset \mathcal{B}$.
On the other hand, by (H)-(iii) and the resolvent equation we get that any element from $\mathcal{C}$ is a point-wise limit of functions from $\overline{U}_\beta(b\mathcal{B})$, hence ($\mathcal{B}=$) $\sigma(\mathcal{C})\subset \sigma (\overline{U}_\beta(b\mathcal{B}))$, so the first assertion follows.
Next, if $f,g \in \mathcal{C}$ then the function $w:=inf(\overline{U}_\beta f,\overline{U}_\beta g)$ is $\overline{\mathcal{U}}_\beta$-supermedian and belongs to $\mathcal{C}$ by (H). 
On the other hand, by (H)-(iii), we get that $w=\lim\limits_{\alpha \to \infty} \alpha \overline{U}_\alpha w$, hence $w$ is $\overline{\mathcal{U}}_\beta$-excessive. 
Now, using (H)-(i) and Lemma 1.2.10 from \cite{BeBo04a}, we get that the set of all non-branch points w.r.t. $\overline{\mathcal{U}}_\beta$ is $E$.

Let $(M_1,\mathcal{B}_1)$ be the saturation of $M$, and $\mathcal{U}_1$ be the extension of $\mathcal{U}$ to $M_1$, given by relation (\ref{eq 4.1}) from Appendix. 
The next step is to show that:

\vspace{0.2 cm}
\noindent
(3) \hspace{0.2 cm} The map $E \ni x \overset{j}{\mapsto}\delta_x \circ \overline{U}_\beta \in Exc(\mathcal{U}_\beta)$ is a measurable embedding of $E$ into $M_1$, $j(\mathcal{B})=\mathcal{B}_1|_{j(E)}$, and $\overline{\mathcal{U}}$ is the restriction of $\mathcal{U}^{1}$ from $M_1$ to $E$, $\mathcal{E}(\overline{U}_\beta)=\mathcal{E}(\mathcal{U}^{1}_\beta)|_E$.

\noindent
Indeed, we already observed that $\delta_x \circ \overline{U}_\beta$ is a measure on $M$ and one can see that it belongs to $Exc(\mathcal{U}_\beta)$. 
Recall that by \cite{St89}, (2) implies that the {\it specific solidity of potentials} holds in $Exc(\overline{\mathcal{U}}_\beta)$: if $\xi_1, \xi_2, \mu \circ \overline{U}_\beta \in Exc(\overline{\mathcal{U}}_\beta),\xi_1+\xi_2=\mu\circ\overline{U}_\beta$, then there exist two measures $\mu_1, \mu_2$ on $E$ such that $\xi_i=\mu_i\circ\overline{U}_\beta$, $i=1,2$.
Consequently, $\delta_x \circ \overline{U}_\beta \in M_1$.
The injectivity of $j$ follows from (H).
To see that $j$ is $\mathcal{B}/\mathcal{B}_1$-measurable, it is sufficient to prove that for every $v \in \mathcal{E}(\mathcal{U}_\beta)$ the function $\widetilde{v}\circ j$ is $\mathcal{B}$-measurable.
If $x \in E$ then $\widetilde{v}\circ j (x)= L^{\beta}(\delta_x\circ \overline{U}_\beta, v)= \mathop{\sup}\limits_\alpha L^{\beta}(\delta_x\circ \overline{U}_\beta, \alpha U_{\beta + \alpha}v)= \mathop{\sup}\limits_\alpha L^{\beta}(\delta_x\circ \alpha \overline{U}_{\beta+\alpha} \circ U_\beta, v) = \mathop{\sup}\limits_\alpha \alpha \overline{U}_{\beta+\alpha} v(x)=\overline{v}(x)$, hence $\widetilde{v}\circ j=\overline{v}$ and by $(1)$ we get $\widetilde{v}\circ j \in p\mathcal{B}$.
From Lusin's Theorem we conclude  that $j(\mathcal{B})=\mathcal{B}_1|_{j(E)}$.
If $\xi=\delta_x\circ \overline{U}_\beta$, $x\in E$, $f \in bp\mathcal{B}_1$, and $\alpha > 0$, then $\overline{U}_\alpha (f|_E)(x)=L^{\beta} (\delta_x \circ\overline{U}_\beta, U_\alpha (f|_M))=U^{1}_\alpha f(\xi)$, so, $\overline{\mathcal{U}}=\mathcal{U}^{1}|_E$.

From now on we identify $E$ with $j(E)$ and $\mathcal{B}$ with $j(\mathcal{B})$, so, $E \in \mathcal{B}_1$ and $\mathcal{B}=\mathcal{B}_1|_E$.

\vspace{0.2 cm}
\noindent
(4) \hspace{0.2 cm} There exists a Ray cone $\mathcal{R}$ associated with $\mathcal{U}_\beta$ s.t. $\overline{\mathcal{R}}:=\{\overline{v}: \; v \in \mathcal{R}\}$ separates the points of $E$. 
If $\mathcal{R}$ is a Ray cone associated with $\mathcal{U}_\beta$ and $\overline{\mathcal{R}}$ separates the points of $E$ then $\overline{\mathcal{R}}$ is a Ray cone associated with $\overline{U}_\beta$. 

\vspace{0.2 cm}
\noindent 
Indeed, let $\mathcal{F}_0 \subset b\mathcal{B}$ be countable and measure separating on $E$, which exists since $E$ is a Lusin space. Then taking into account the standard construction of a Ray cone (see Appendix, Remark \ref{rem 4.6}, (ii)), one can suppose that it includes the countable set $U_\beta(\mathcal{F}_0)$, and therefore $\overline{U}_\beta (\mathcal{F}_0)$ separates the points of $E$.
Let now $\mathcal{R}$ be a Ray cone with $\overline{R}$ separating the points of $E$.
We know by (2) that $\mathcal{E}(\overline{U}_\beta)$ is min-stable and by (1) we get that $\overline{U}_\beta((\overline{\mathcal{R}}-\overline{\mathcal{R}})_+)= \overline{U}_\beta(({\mathcal{R}}-{\mathcal{R}})_+) \subset \overline{\mathcal{R}}$ and that $\overline{\mathcal{R}}$ is min-stable.
Since $\overline{\mathcal{R}}$ is separable, separates the points of $E$ and $\sigma(\overline{R})\subset \mathcal{B}$ we deduce by Lusin's Theorem that $\sigma(\overline{\mathcal{R}})=\mathcal{B}$.

Note that by Corollary \ref{coro 4.10} from Appendix, $X$ remains a right process if we endow $M$ with any Ray topology.

We return now to the saturation $M_1$ of $M$. 
Let $\mathcal{R}$ be a Ray cone associated with $\mathcal{U}_\beta$ such that $\overline{\mathcal{R}}$ separates the points of $E$, which exists by (4).

One can equip $M_1$ with the Ray topology $\tau_1$ generated by $\widetilde{R}:= \{\widetilde{v}: \; v\in \mathcal{R}\}$. 
Then $M_1$ becomes a Lusin topological space and Theorem \ref{thm 4.15} from Appendix, there exists a right Markov process $X^{1}$ with state space $M_1$ having $\mathcal{U}^{1}$ as associated resolvent.
In addition, the set $M_1 \setminus M$ is a polar subset of $M_1$. 
Therefore the set $M_1 \setminus E$ is also a polar set and thus, $\overline{\mathcal{U}}$ is the resolvent of the restriction $\overline{X}$ of $X^{1}$ to the absorbing set $E$.

In conclusion:

\vspace{0.2 cm}
\noindent
(5) \hspace{0.2 cm} $\overline{X}$ is a right Markov process with state space $E$ endowed with the Ray topology $\tau_0:=\tau(\overline{\mathcal{R}})$ generated by $\overline{\mathcal{R}}$, and by Corollary \ref{coro 4.10} from Appendix, it remains a right process w.r.t. any natural topology.
Because $\mathcal{U}$ is the restriction to $M$ of $\overline{\mathcal{U}}$, and since the set $E\setminus M$ is polar with respect to $\overline{\mathcal{U}}$ (see Theorem \ref{thm 4.13} from Appendix), we get that $\overline{X}$ is a natural extension of $X$.

\end{proof}

\begin{proof}[Proof pf Corollary \ref{coro 1.12}]
By (3) from the proof of Theorem \ref{thm 2.2}, if $\overline{\mathcal{U}}$ and $\overline{\mathcal{U}}'$ are two extensions of $\mathcal{U}$ satisfying (H), then on the saturation $M_1$ of $M$ they are both the restriction of $\mathcal{U}^{1}$ to $E$, so they must coincide.

\end{proof}

\begin{proof}[Proof of Proposition \ref{prop 2.9}]
(i) First of all, note that $(\overline{X}(t), \overline{\mathbb{P}^{x}})$ is a right process on $M$ with resolvent $\mathcal{U}$.
By Appendix, if $x\in M$, it follows that $\overline{X}$ has $\mathbb{P}^{x}$-a.s. right continuous paths w.r.t. any natural topology (w.r.t. $\mathcal{U}$) on $M$, in particular w.r.t. $\tau$.

Let now $x\in E\setminus M$ and $s>0$. Then, by the Markov property
\begin{align*}
\overline{\mathbb{P}}^{x}\{t\mapsto\overline{X}(t) & \mbox{ is right continuous on } (s, \infty) \mbox{ w.r.t. } \tau\}=\\
&=\overline{\mathbb{P}}^{x}\{t\mapsto\overline{X}(t+s) \mbox{ is right continuous on } (0, \infty) \mbox{ w.r.t. } \tau\}\\
&=\overline{\mathbb{E}}^{x}\{1_{\{t\mapsto\overline{X}(t) \mbox{ is right continuous on } (0, \infty) \mbox{ w.r.t. } \tau\}}\circ \theta(s)\}\\
&=\overline{\mathbb{E}}^{x}\{\mathbb{P}^{\overline{X}(s)}\{t\mapsto\overline{X}(t) \mbox{ is right continuous on } (0, \infty) \mbox{ w.r.t. } \tau\}\}\\
&=1,
\end{align*}
where the last equality holds because $\overline{\mathbb{P}}^{x}\{\overline{X}(s)\in M\}=1$, by Definition \ref{defi 2.1}.
Since $s>0$ was arbitrarily chosen, the proof of (i) is complete.

(ii)-(iii) Let $D$ be a countable dense subset of $\mathbb{R}_+$.
Endowed with the $\sigma$-algebra generated by the canonical projections, the power sets $E^{D}$ and $M^{D}$ are a Lusin measurable spaces; in fact, $M^{D}$ is a measurable subset of $E^{D}$. 
Let $W^{M}$ (resp. $W^{E}$)  denote the restrictions to $D$ of the c\`adl\`ag paths from $\mathbb{R}_+$ to $M$ (resp. $E$).
Now, because $\tau$ is a Lusin topology, by \cite{DeMe78}, Chapter IV, pages 91-92, we have that $W^{M}$ and $W^{E}$ are measurable subsets of $M^{D}$ and $E^{D}$, respectively.
The fact that the set $W^{0}$ of the restrictions to $D$ of all $\tau$-continuous paths in $M$ is measurable, is well known and straightforward.

Suppose now that $X$ has paths with left limits in $M$.  
Let us first take $x\in M$, and let $\overline{\mathbb{P}}^{x}\circ \overline{X}^{-1}$ and $\mathbb{P}^{x}\circ X^{-1}$ be the laws on $M^{D}$ of the processes $\overline{X}$ and $X$, respectively. 
By Definition \ref{defi 2.1}, we have $\overline{\mathbb{P}}^{x}\circ \overline{X}^{-1}=\mathbb{P}^{x}\circ X^{-1}$. 
It follows that $\overline{\mathbb{P}}^{x}\circ \overline{X}^{-1}$ is supported on $W^{M}$, and
since we know from (i) that the paths of $\overline{X}$ are $\overline{\mathbb{P}}^{x}$-a.s. right continuous w.r.t. $\tau$, we conclude that they have also $\overline{\mathbb{P}}^{x}$-a.s. left limits in $M$ w.r.t. $\tau$. 

The other two cases follow similarly, by replacing $W^{M}$ with $W^{E}$ and $W^{0}$, respectively.
\end{proof}

\begin{proof}[Proof of Proposition \ref{prop 2.10}]
If we start with a countable subset $\mathcal{A}_0 \subset \mathcal{C}'$ which separates the points of $E$, then as in Remark \ref{rem 4.6}, (ii) from Appendix, one can easily construct a Ray cone $\overline{\mathcal{R}}$ (w.r.t. $\overline{\mathcal{U}}$) such that $\overline{\mathcal{R}}\subset C_b(E)$, and we can take $\tau_0$ to be it's Ray topology.

\end{proof}

\begin{proof}[Proof of Proposition \ref{prop 2.7}]
If $f \in C(E)\cap D(L)$, there exists a measurable function $g$ on $E$ which is from $L^{p}(\nu)$ such that $f=\overline{U}_1g$ $\nu$-a.e. 
Set $v_1:=\overline{U}_1g^{+}$ and $v_2:=\overline{U}_1g^{-}$.
Then $v_1$ and $v_2$ are $1$-excessive functions which are continuous (hence finite) on $E$. Since $\nu$ has full support and $f\in C(E)$, it follows that $f=v_1-v_2$ on $E$, hence $f$ is finely continuous.

Next, if $\mathcal{A}\subset C(E)\cap D(L)$ is countable and separates the points of $E$, it follows that the topology generated by $\mathcal{A}$ is a Lusin topology. The second part of the statement follows now by the first one.

\end{proof}

\begin{proof}[Proof of Lemma \ref{lem 2.4}]
Since $\mathcal{U}$ is the resolvent of a right process on $M$, we have that $\alpha U_{\alpha + \beta}(f|_M) \underset{\alpha \to \infty}{\longrightarrow} f|_M$, hence $\alpha \overline{U}_{\alpha + \beta}f|_M \underset{\alpha \to \infty}{\longrightarrow} f|_M$ point-wise on $M$ for all $f\in \mathcal{C}$.

Let now $x \in E\setminus M$. 
Since $f\in \mathcal{C}$ is continuous and the family $(\alpha \overline{U}_{\alpha+\beta} f)_{\alpha > 0} $ is equicontinuous on $E$, for arbitrarily fixed $\varepsilon > 0$ we have that there exist a neighbourhood $V(x)$ s.t. $|\alpha \overline{U}_{\alpha+\beta}f(x)-\alpha \overline{U}_{\alpha+\beta}f(y)|+|f(x)-f(y)|\leq \varepsilon$ for all $\alpha > 0$ and $y \in V(x)$.
Therefore,
\begin{align*}
\alpha \overline{U}_{\alpha+\beta}f(x)-f(x)| &\leq |\alpha \overline{U}_{\alpha+\beta}f(x)-\alpha \overline{U}_{\alpha+\beta}f(y)| +|\alpha \overline{U}_{\alpha+\beta}f(y)-f(y)|+ |f(x)-f(y)|\\
&\leq \varepsilon + |\alpha \overline{U}_{\alpha+\beta}f(y)-f(y)|, \;\; y \in V(x), \alpha>0.
\end{align*}
Because $M$ is dense in $E$, there exists $y \in V(x) \subset M$, so by taking limits in the above expression we get that $\mathop{\limsup}\limits_{\alpha\to \infty}|\alpha \overline{U}_{\alpha+\beta}f(x)-f(x)|\leq \varepsilon$.
But $\varepsilon$ was arbitrarily chosen, so $\alpha \overline{U}_{\alpha+\beta}f(x)\underset{\alpha \to \infty}{\longrightarrow}f(x)$. 
By the resolvent equation we obtain the same convergence for all $\beta >0$.

\end{proof}

\begin{proof}[Proof of Proposition \ref{prop 2.5}]
Clearly $\mathcal{C}$ is a min-stable convex cone, $\mathcal{C} \subset bp\mathcal{B}$, and satisfies (H),(i)-(ii). 
The third condition of (H) follows by Lemma \ref{lem 2.4}.

\end{proof}

\subsection{Proof of Corollary \ref{coro 3.1}}

Let $f \in \mathcal{A}_0$ and $\alpha >0$. 
Since ${U}_\alpha f$ is $\widetilde{\mathcal{U}}$-finely continuous (since it is $\tau$-continuous and $\tau$ is natural w.r.t. $\widetilde{\mathcal{U}}$) 
it follows that the set $V:=[{U}_\alpha f \neq \widetilde{U}_\alpha f]$ is $\widetilde{\mathcal{U}}$-finely open and $\nu$-negligible, hence $\nu$-polar w.r.t. $\widetilde{\mathcal{U}}$, i.e. $\nu(\widetilde{R}_\alpha^V 1)=0$; see Appendix for the definition of the reduced function.
If we set $M:=[\widetilde{R}_1^{V}1=0]$, then $M^c$ is $\nu$-inessential 
(i.e. $\widetilde{R}_1^{M^{c}}1=0$ on $M$ and $M^c$ is $\nu$-negligible) and $V\subset M^c$.
Since $\mathcal{A}_0$ is countable we can take a  $\nu$-inessential set $M^c$ as above such that ${U}_\alpha f = \widetilde{U}_\alpha f$ on $M$ for all $f\in \mathcal{A}_0$ and $\alpha \in \mathbb{Q}^{+}$.
Moreover, since $M^c$ is $\nu$-negligible and $\nu$ is a reference measure for ${\mathcal{U}}$, it follows that ${U}_\alpha 1_{E\setminus M} = 0$ on $E$. 

Let $\mathcal{U}^{'}$ denote the restriction of $\widetilde{\mathcal{U}}$ from $E$ to $M$.
Since $M^{c}$ is $\nu$-inessential, it is well known that $\mathcal{U}^{'}$ is the resolvent of the right Markov process on $M$ obtained by taking the restriction of $\widetilde{X}$ to $M$.
Moreover, for each $\alpha\in \mathbb{Q}^{+}$ we have that $({U}_\alpha f)|_M=U^{'}_\alpha (f|_M)$ for all $f\in \mathcal{A}_0$, and since $\mathcal{A}_0$ is measure separating, it follows that $U_\alpha|_M=U_\alpha^{'}$.
By the resolvent equation it follows that ${\mathcal{U}}|_M=\mathcal{U}^{'}$.

To conclude, we obtained that $\mathcal{U}$ is an extension of $\mathcal{U}^{'}$ from $M$ to $E$, for which condition (H) is fulfilled. 
Therefore, the statement follows by Theorem \ref{thm 2.2}.

\subsection{Proofs of the results from Subsection 1.1.} 
For simplicity, throughout this subsection we assume that $\sigma$ is the identity operator on $H$.
Also, note that by \thnameref{thm 3.2}, (i.1), and Remark \ref{rem 4.2} from Appendix, it follows that both conditions (H) and (H') from Section 2 and Appendix hold true for $\mathcal{U}$.

\begin{proof}[Proof of \thnameref{thm 3.4}]
First of all, as in the proof of Corollary 1.10 from \cite{DaRoWa09}, by \cite{BoDaRo96}, Theorem 1.1 and \cite{St99a}, Chapter II, Theorem 1.9 and Proposition 1.10, 
there exists a normal Markov process 
$X'=(\Omega', \mathcal{F}', \mathcal{F}'_t , X'(t) , \theta'(t),  \mathbb{P'}^x )$ with continuous trajectories in $H_0$, whose resolvent is (merely) a $\nu$-version of $\mathcal{U}$. 
Consequently, we are in the situation of Corrolary \ref{coro 3.1} so that there exists $\widetilde{M}\subset H_0$ such that $H_0\setminus \widetilde{M}$ is polar, 
and there exists a right Markov process 
$X=(\Omega, \mathcal{F}, \mathcal{F}_t , X(t), \theta(t), \mathbb{P}^x)$ on $H_0$ such that if the process starts from $\widetilde{M}$ then it has continuous trajectories in $\widetilde{M}$ w.r.t. the $|\cdot|$-topology.

What we are going to show later on in (ii) is that, in fact, the process $X$ has $|\cdot|$-continuous trajectories on $[0, \infty)$ for all starting points in $H_0$.
Before we proceed to the proofs of (i-iii), set 
$$
\widetilde{A}=A-\omega I \mbox{ and }\widetilde{F}_0=F_0+\omega I,
$$
and $(e_k)_{k\geq 1} \subset H$ an orthonormal basis consisting of eigenvectors of $\widetilde{A}$, with corresponding eigenvalues $(-\lambda_k)_{k\geq 1}\subset (-\infty, 0)$.
For $k\geq 1$ let $$\varphi_k(x)=\langle x, e_k \rangle, x\in H,$$
and note that since $\mathcal{F}\mathcal{C}_b^{2} \subset D(L)$ (see Remark \ref{rem 3.3}), by localization we get for all $k\geq 1$
\begin{align*}
\varphi_k, \varphi_k^{2} \in D(L) \mbox{ and } & L\varphi_k=-\lambda_k \varphi_k + \langle \widetilde{F}_0, e_k \rangle\\
& L\varphi_k^{2}=-2\lambda_k \varphi_k^{2} + 2\varphi_k \langle \widetilde{F}_0, e_k \rangle + 2.
\end{align*}
\vspace{0.2 cm}
\noindent
(i). Let $g(x):=(1+|x|^{2})(1+|F_0(x)|^{2}), x\in H_0.$
Then $g\in L^1(\nu)$ by assumption, hence $U_\alpha g \in L^1(\nu)$ and $U_\alpha g < \infty$ $\nu$-a.e.
Since $\nu$ is a reference measure, it is well known that $[U_\alpha g=\infty]$ is a polar set for all $\alpha >0$, hence the set
$M_0:=E\setminus (\mathop{\bigcup}\limits_{\alpha>0}[U_\alpha g=\infty])=E\setminus (\mathop{\bigcup}\limits_{\alpha \in \mathbb{Q}_{+}^{\ast}}[U_\alpha g=\infty])$ has polar complement.

Now, we are going to solve the martingale problem for $\varphi_k$, so for $t\geq 0$ set
\begin{equation} \label{eq 3.4}
\beta_k(t):=\varphi_k(X(t))-\varphi_k(X(0))-\int_0^{t}L\varphi _k(X(s))ds
\end{equation}
and
\begin{equation} \label{eq 3.5}
M_k(t):=\varphi_k^{2}(X(t))-\varphi_k^{2}(X(0))-\int_0^{t}L\varphi_k^{2}(X(s))ds.
\end{equation}
Under $\mathbb{P}^{x}$, $x\in M_0\cap \widetilde{M}$, the integrals in (\ref{eq 3.4}) and (\ref{eq 3.5}) are well defined so that $\beta_k$ and $M_k$ are real-valued processes with continuous trajectories.
Indeed, the continuity of the integrals is due to the integrability ensured when $x\in M_0$, so it only remains to make sure that the integrals do not depend on the versions of $L\varphi_k$ resp. $L\varphi_k^{2}$. 
But this is a consequence of a more general statement: if $f:H_0\rightarrow \mathbb{R}$ is a measurable function s.t. $f=0 $ $\nu$-a.e., then $\int_0^{t}f(X(s))ds=0$ for all $t\geq 0$ $\mathbb{P}^{x}$-a.s. 
This is true because $f=0$ $\nu$-a.e. implies that $U_1 |f|=0$ $\nu$-a.e., hence $U_1|f|=0$ everywhere on $E$ by the strong Feller property (in fact, by a potential theoretical argument, knowing that $\nu$ is a reference measure is enough).

Let now $M:=M_0 \cap\widetilde{M}$, whose complement is clearly polar.
We claim that $\beta_k$ and $M_k$ are martingales w.r.t. $\mathbb{P}^{x}$ for all $x\in M$; in fact, the final aim is to show that $\beta_k, k\geq 1$ are independent standard Brownian motions. 
First, notice that on $M$ we have for all $\alpha >0, i \in {1,2}, k\geq 1$
$$
U_\alpha L\varphi_k^{i}=\alpha U_\alpha \varphi_k^{i} -\varphi_k^{i},
$$
since the sets $[|U_\alpha L\varphi_k^{i}-\alpha U_\alpha \varphi_k^{i} +\varphi_k^{i}|>0]\cap M$ are negligible (and hence of null potential) and finely open (w.r.t. $\mathcal{U}$), so they must be empty.

Let us show that $\beta_k$ is a martingale, the case of $M_k$ being similar.
For $0\leq s < t$, by the Markov property we get
$$\mathbb{E}^{x}\big [ \beta_k(t+s)-\beta_k(s) | \mathcal{F}_s\big ]= P_t\varphi_k(X(s))-\varphi_k(X(s))-\int_0^{t}P_rL\varphi_k (X(s))dr,$$
and because $E\setminus M$ is polar, it is enough to show that on $M$ we have
$$
P_t\varphi_k-\varphi_k-\int_0^{t}P_rL\varphi_k dr= 0.
$$
This is indeed true because on $M$ 
$$
\int_0^{\infty}e^{-\alpha t} (P_t\varphi_k-\varphi_k) dt=\frac{1}{\alpha}U_\alpha\varphi_k -\varphi_k=\frac{1}{\alpha}U_\alpha L\varphi_k=\int_0^{\infty}e^{{-\alpha t}}\int_0^{t}P_sL\varphi_k ds 
$$
for all $\alpha >0$. Since $t\mapsto \int_0^{t}P_sL\varphi_k ds $ is continuous, the claim follows by the uniqueness of the Laplace transform.

The proof now continues as in \cite{DaRoWa09}, and since we shall repeat most of the arguments therein used (but with certain adjustments) a little bit later, let us resume to recall briefly that the idea is to show that if $x\in M$ then $[\beta_k,\beta_{k'}]=t\delta_{k,k'}$ $\mathbb{P}^{x}$-a.s. so that
\begin{equation} \label{eq 3.6}
W:=\mathop{\sum}\limits_{k\geq 1}e_k\beta_k
\end{equation}
is a cylindrical Wiener process on $H$ under 
$\mathbb{P}^{x}$ and 
$(\Omega, \mathcal{F}, \mathcal{F}_t, , W, X, \mathbb{P}^{x})$ becomes a (weak) solution for equation (\ref{eq 3.1}) starting from $x$, in the sense of \cite{On04}, page 8; then, the monotonicity of $L$ entails $X$-pathwise uniqueness and the existence of a pathwise unique continuous strong solution follows by the Yamada-Watanabe type results from \cite{On04};  for more details see \cite{DaRoWa09}, page 20.

Let us rigourosly show that the solution on $M$ can be extended to a generalized solution on $E\setminus H_0$, in the sense of \cite{DaZa14}, Subsection 7.2.4. 
Let $(\widetilde{\Omega}, \widetilde{\mathcal{F}}, \widetilde{\mathcal{F}}_t ,  \widetilde{W}, \widetilde{\mathbb{P}})$ be a cylindrical Wiener process on $H$ and denote by $(X(t,x))_{t\geq 0}$ the pathwise unique continuous strong solution to equation (\ref{eq 3.1}) starting from $x\in M$, which is also Markov.
If $x\neq y \in M$, then as in \cite{DaRoWa09}, page 20,
$$
\langle e_k,X(t,x)-X(t,y)\rangle=\langle e_k,x-y \rangle - \lambda_k \int_0^{t}\langle e_k,X(s,x)-X(s,y)\rangle ds + \int_0^{t}\langle e_k,\widetilde{F}_0(X(s,x))-\widetilde{F}_0(X(s,y))\rangle ds .
$$
Then by the chain rule
\begin{align*}
\langle e_k,X(t,x)-X(t,y)\rangle^{2}=&\langle e_k,x-y \rangle^{2} - 2\lambda_k \int_0^{t}\langle e_k,X(s,x)-X(s,y)\rangle^{2} ds\\
 &+ 2\int_0^{t}\langle X(s,x)-X(s,y), e_k\rangle \langle e_k,\widetilde{F}_0(X(s,x))-\widetilde{F}_0(X(s,y))\rangle ds
\end{align*}
and summing up after $k$
\begin{align*}
|X(t,x)-X(t,y)|^{2}&\leq |x-y|^{2}+2\omega\int_0^{t}|X(s,x)-X(s,y)|^{2}ds\\
&\leq |x-y|^{2}e^{\omega t} \quad \mbox{ for all } t\geq 0
\end{align*}
where the second inequality follows by Gronwall inequality.
This means that the mapping
$$
H_0\supset M\ni x\mapsto X(\cdot,x) \in L^{\infty}(\Omega, C([0,T];H_0))
$$
is Lipschitz, and because $M$ is dense in $H_0$, it can be uniquely extended by continuity to the entire $H_0$, and such an extension is precisely a generalized solution in the sense of \cite{DaZa14}.

Let now $X(\cdot,x),\; x\in H_0\setminus M$ denote the extended process obtained in (i), and let us show it is a Markov process with transition function $(P_t)_{t\geq 0}$: if $f_1,...,f_n\in b\mathcal{B}$ continuous, $M\ni x_k \mathop{\to}\limits_k x \in H_0\setminus M$, and $0\leq t_1\leq \dots t_n<\infty$, then
\begin{align*}
\mathbb{E} \big [f_1(X(t_1,x))\cdots f_n(X(t_n,x)) \big ]&=\mathop{\lim}\limits_k \mathbb{E} \big [f_1(X(t_1,x_k))\cdots f_n(X(t_n,x_k))\big ]\\
&=\mathop{\lim}_k P_{t_1}f_1P_{t_2-t_1}f_2\cdots P_{t_n-t_{n-1}}f_n(x_k)\\
&=P_{t_1}f_1P_{t_2-t_1}f_2\cdots P_{t_n-t_{n-1}}f_n(x).
\end{align*}
By a monotone class argument, it follows that the Chapman-Kolmogorov identities are satisfied for all $f_1,...,f_n\in b\mathcal{B}$.

\vspace{0.2 cm}
\noindent
(ii). Similarly to the proof of Proposition \ref{prop 2.9}, we obtain that on the path space, the law $\mathbb{P}^{x}\circ X(\cdot)^{-1}$ is the same as $\overline{\mathbb{P}}\circ X(\cdot,x)^{-1}$, hence it is supported on the space $C([0, \infty); H_0)$ of all $|\cdot|$-continuous paths from $[0, \infty)$ to $H_0$.

\vspace{0.2 cm}
(ii.1). The assertion follows by Theorem \ref{thm 4.13} from Appendix.

\vspace{0.2 cm}
(ii.2). It follows precisely like \cite{DaRo02a}, Theorem 7.4, (ii), or as in  the proof of \thnameref{thm 3.16}, (ii)  below.

\vspace{0.2 cm}
(ii.3). In principle, the strategy is similar to the one for assertion (i), but this time we have to solve (\ref{eq 3.1}), with $B\equiv 0$, for initial distribution $\mathbb{P}^{x}\circ X(\varepsilon)^{-1}$, which is no longer a Dirac distribution. This comes with additional integrability issues, so let us give a rigorous proof:
let $x\in H_0\setminus M, \varepsilon > 0$ and set for all $t\geq 0$
$$\beta_k^{\varepsilon}(t):=\varphi_k(X(t+\varepsilon))-\varphi_k(X(\varepsilon))-\int_0^{t}L\varphi _k(X(s+\varepsilon))ds$$
and
$$M_k^{\varepsilon}(t):=\varphi_k^{2}(X(t+\varepsilon))-\varphi_k^{2}(X(\varepsilon))-\int_0^{t}L\varphi_k^{2}(X(s+\varepsilon))ds.$$

First of all, let us notice that the integrals appearing in the definitions of $\beta_k^{\varepsilon}$ and $M_k^{\varepsilon}$ are well defined. 
Indeed, by assumption $\widetilde{F}_0 \in L^{2}(\nu)$, hence $U_1|\widetilde{F}_0|^{2} \in L^{1}(\nu)$ and there exists $y\in H_0$ s.t. $U_1|\widetilde{F}_0|^{2}(y)<\infty$.
On the other hand, by Harnack inequality we have
$$
P_{s+\varepsilon}|\widetilde{F}_0|(x) \leq \Big ( P_{s+\varepsilon}|\widetilde{F}_0|^{2}(y)e^{\frac{2\omega |x-y|^{2}}{1-e^{-2\omega (s+\varepsilon)}}}\Big )^{\frac{1}{2}},
$$
which leads to
\begin{align*}
\int_0^{t} P_{s+\varepsilon}|\widetilde{F}_0|(x) ds & \leq \int_0^{t} \Big [P_{s+\varepsilon}|\widetilde{F}_0|^{2}(y)e^{\frac{2\omega |x-y|^{2}}{1-e^{-2\omega (s+\varepsilon)}}}\Big ]^{\frac{1}{2}} ds\\
&\leq \sqrt{t} e^{\frac{1}{2}(T+\varepsilon+\frac{2\omega |x-y|^{2}}{1-e^{-2\omega \varepsilon}})} \Big (\int_0^{t} e^{-(s+\varepsilon)} P_{s+\varepsilon}|\widetilde{F}_0|^{2}(y) ds\Big )^{\frac{1}{2}}\\
&\leq \sqrt{t} e^{\frac{1}{2}(T+\varepsilon+\frac{2\omega |x-y|^{2}}{1-e^{-2\omega \varepsilon}})} \Big( U_1|\widetilde{F}_0|^{2}(y)\Big )^{\frac{1}{2}}<\infty.
\end{align*}
In fact, since $L\varphi_k \in L^{2}(\nu)$, by a similar computation we get that $\beta_k^{x}(t) \in L^{1}(\mathbb{P}^{x})$ for all $t>0$. 
The fact that $M_k^{\varepsilon}$ are well defined can be done by localization, see below.

The fact that $\beta_k^{\varepsilon}$ is a martingale w.r.t. $\mathbb{P}^{x}$ follows as in (i) except that this time the polarity of $H_0\setminus M$ is crucial, namely
\begin{align*}
\mathbb{E}^{x}\Big [ \beta_k^{\varepsilon}(t+s)-\beta_k^{\varepsilon}(s) | \mathcal{F}_{s+\varepsilon} \Big ]
&=\mathbb{E}^{x}\Big [ \varphi_k(X(t+s+\varepsilon))-\varphi_k(X(s+\varepsilon)) - \int_s^{t+s}L\varphi_k(X(r+\varepsilon)) dr | \mathcal{F}_{s+\varepsilon} \Big ]\\
&= P_t\varphi_k(X(s+\varepsilon))-\varphi_k(X(s+\varepsilon))-\int_0^{t}P_rL\varphi_k (X(s+\varepsilon))dr\\
&=0,
\end{align*}
because $X(s+\varepsilon)\in M$ for all $s\geq 0$, and on $M$ we know from the proof of (i) that
$$
P_t\varphi_k-\varphi_k-\int_0^{t}P_rL\varphi_k dr=0.
$$

Now we show that $M_k^{\varepsilon}$ is a (continuous) local martingale w.r.t. $\mathbb{P}^{x}$, $x\in H_0\setminus M$.
Let $\theta_N\in C_0^{\infty}(\mathbb{R})$ such that $\theta_n(x)=x^{2}$ on $[-N,N]$.
Then $\theta_N\circ \varphi_k \in D(L)$ and
\begin{align*}
L\theta_N\circ\varphi_k(x)&=(\langle F_0(x),e_k \rangle+\langle x,Ae_k \rangle) \theta_N^{'}\circ\varphi_k(x)+\frac{1}{2} \theta_N^{''}\circ\varphi_k(x)\\
&=-\lambda_k\varphi_k(x)\theta_N^{'}\circ\varphi_k(x)+\langle \widetilde{F_0}(x),e_k \rangle\theta_N^{'}\circ\varphi_k(x)+\frac{1}{2} \theta_N^{''}\circ\varphi_k(x),
\end{align*}
in particular, $\theta_N\circ \varphi_k=\varphi_k^{2}$ and $L\theta_N\circ \varphi_k=L\varphi_k^{2}$ if $ |x|_H\leq N$ $\nu$-a.e.
Now, just as we did for $\beta_k^{\varepsilon}$, one can show that
$$
M_k^{\varepsilon, N}(t):=\theta_N\circ \varphi_k (X(t+\varepsilon))-\theta_N\circ \varphi_k (X(\varepsilon))-\int_0^{t}L \theta_N\circ \varphi_k (X(s+\varepsilon)) ds
$$
is an $(\mathcal{F}_{t+\varepsilon})_{t\geq 0}$-martingale.
Consequently, if we consider the $(\mathcal{F}_{t+\varepsilon})_{t\geq 0}$-stopping times $T_N:=\varepsilon + \inf \{ t>0 : |X(t+\varepsilon)|_H \geq N\} $, then $T_N\mathop{\nearrow}\limits_N \infty$ and by Doob's stopping theorem it follows that $M_k^{\varepsilon}(t\wedge T_N)=M_k^{\varepsilon, N}(t\wedge T_N), t\geq 0$ is a martingale.

Next, we show that $\beta_k^{\varepsilon}, k\geq 1$ are independent standard Brownian motions, and by Levy's characterization and polarization, it is sufficient to show that
$$
[\beta_k^{\varepsilon}, \beta_k^{\varepsilon}](t)=\int_0^{t}|D\varphi_k(X(s+\varepsilon))|^{2} ds\;=t.
$$
Since the second equality is trivial, let us show the first one.
\begin{align*}
[\beta_k^{\varepsilon}, \beta_k^{\varepsilon}](t)&=[\varphi_k(X(\cdot+\varepsilon)),\varphi_k(X(\cdot+\varepsilon))](t)-\varphi_k^{2}(X(\varepsilon))\\
&=\varphi_k^{2}(X(t+\varepsilon))-2\int_0^{t}\varphi_k(X(s+\varepsilon))d\varphi_k(X(s+\varepsilon))-\varphi_k^{2}(X(\varepsilon))\\
&=\int_0^{t}(L\varphi_k^{2}-2\varphi_k L \varphi_k)(X(s+\varepsilon))ds + M_k^{\varepsilon}(t)-2\int_0^{t}\varphi_k(X(s+\varepsilon))d\beta_k^{\varepsilon}(s)\\
&=\int_0^{t}|D\varphi_k(X(s+\varepsilon))|^{2} ds+ \mbox{ continuous local martingale.}
\end{align*}
The claim now follows since a continuous local martingale with paths of finite variation must be constant.

To conclude, since $W^{\varepsilon}:=\mathop{\sum}\limits_{k\geq 1}e_k\beta_k^{\varepsilon}$ defines a cylindrical Wiener process on $H$, we have that for $k\geq 1$
$$
\langle e_k,X(t+\varepsilon)\rangle=\langle e_k,X(\varepsilon)\rangle+\int_0^{t}\langle Ae_k,X(s+\varepsilon)\rangle + \langle e_k,F_0(X(s+\varepsilon))\rangle ds
+\langle e_k,W^{\varepsilon}(t)\rangle$$ for all $t\geq0$ $\mathbb{P}^{x}$-a.s.

By \cite{On04}, Theorem 13, it follows that 
$(\Omega,\mathcal{F}, \mathcal{F}_{t+\varepsilon},\mathbb{P}^{x}, W^{\varepsilon}, X(t+\varepsilon))$ is a solution to equation (\ref{eq 3.1})  in the sense of \cite{On04}, page 8.

\vspace{0.2 cm}
(iii) Let $x\in H_0\setminus M$, $\varepsilon>0$, and $\nu_\varepsilon^{x}:=\mathbb{P}^{x}\circ X(\varepsilon)^{-1}$.
Then by (ii.3), there exists a solution $X^{\varepsilon}$ (in the sense of \cite{On04}) for equation (\ref{eq 3.1}), with $B\equiv 0$, with initial distribution $\nu_\varepsilon^{x}$.
Then, the proof continues as in \cite{DaRoWa09}, page 20.
\end{proof}

\noindent
{\bf The case when $B \not\equiv 0$.}
Recall from Subsection 1.1 that for this case we fix a cylindrical Wiener process $\widetilde{W}$ on a stochastic basis $(\widetilde{\Omega}, \widetilde{\mathcal{F}}, \widetilde{\mathcal{F}}_t, \widetilde{\mathbb{P}})$, and take $(X(t,x))_{t\geq 0}$ to be the generalized solution given by (the proof of) \thnameref{thm 3.4}.

Before we prove \thnameref{thm 3.16} we need to go through several steps.
First we show that there exists a Markov process on $M$ with transition function $(Q_t)_{t\geq 0}$.

\begin{prop}\label{prop 3.5} There exists a right Markov process on $M$ with a.s. (norm) continuous paths and
with transition function $(Q_t)_{t \geq 0}$.
\end{prop}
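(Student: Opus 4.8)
The plan is to construct the desired process on $M$ directly from the generalized solution $(X^x(t))_{t\ge 0}$ and the Girsanov densities $\rho_t^x$, exploiting that on the "good" set $M$ the process $X^x$ genuinely solves equation (\ref{eq 3.1}) with $B\equiv 0$. First I would check that $(Q_t)_{t\ge 0}$, defined by (\ref{eq 3.7}), is a Markovian semigroup of kernels on $M$: the semigroup (Chapman--Kolmogorov) property is the standard consequence of the cocycle identity $\rho_{t+s}^x=\rho_t^x\cdot(\rho_s^{X^x(t)}\circ\theta_t)$ for the exponential martingale together with the Markov property of $X^x$ on $M$, and $Q_t 1=1$ because $\rho_t^x$ is a genuine (Novikov) martingale, $B$ being bounded. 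Measurability of $x\mapsto Q_tf(x)$ for $f\in b\mathcal{B}(M)$ follows from the measurable (in fact, by Theorem \ref{thm 3.4}(i), continuous) dependence of $X^x$ on $x$.

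Next, for each fixed $x\in M$, the infinite-dimensional Girsanov theorem (as recalled in Remark \ref{rem 3.6}, via \cite{LiRo15}, Appendix I) applied to the weak solution $(X^x(t))_{t\in[0,T]}$ of (\ref{eq 3.1}) with $B\equiv 0$ produces a probability $\mathbb{Q}_T^x=\rho_T^x\,\widetilde{\mathbb{P}}$ under which $X^x$ solves the full equation (\ref{eq 3.1}) on $[0,T]$ and stays in $M$ (since $X^x$ does so $\widetilde{\mathbb{P}}$-a.s. and $\mathbb{Q}_T^x\ll\widetilde{\mathbb{P}}$). The family $(\mathbb{Q}_T^x)_{T>0}$ is consistent because $\rho_T^x$ is a martingale, so by Kolmogorov's extension theorem on the canonical path space $C([0,\infty);M)$ they glue to a single probability $\mathbb{Q}^x$; the finite-dimensional distributions of the coordinate process under $\mathbb{Q}^x$ are exactly $Q_{t_1}\cdots$ by construction of $Q_t$. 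This gives a family $(\mathbb{Q}^x)_{x\in M}$ of measures on a common path space with continuous trajectories in $M$ and the right transition function, and with $x\mapsto\mathbb{Q}^x$ measurable. The strong Markov property then follows from the (fine-topology) construction once we know the transition semigroup is sufficiently regular; here one can invoke the strong Feller type bounds available for $(Q_t)$ on $M$ coming from Theorem \ref{thm 3.2} and the estimates (\ref{eq 3.17})--(\ref{estimate}) for $\mathcal{U}$, together with the fact that $M$ carries the right process of Theorem \ref{thm 3.4}, so that standard perturbation/canonization results upgrade $(\Omega,\mathcal{F},\mathcal{F}_t,X,\theta,\mathbb{Q}^x)$ to a right Markov process on $M$.

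The main obstacle I expect is verifying the regularity needed to conclude that the constructed family is a \emph{right} Markov process (not merely a Markov family with continuous paths): one must confirm that $(Q_t)_{t\ge 0}$ maps bounded measurable functions to functions that are finely continuous / sufficiently regular along the paths, and that the resolvent $\mathcal{V}=(V_\alpha)$ behaves well enough to apply the right-process criteria from the Appendix. This is where boundedness of $B$ is essential: it keeps $\rho_t^x$ a true martingale with all moments controlled, so that the Girsanov change of measure preserves continuity of paths and the $L^p(\nu)$-smoothing of $\mathcal{U}$ transfers to $\mathcal{V}$. The remaining verifications — cocycle identity, consistency, Chapman--Kolmogorov — are routine computations with the exponential martingale and should be only sketched.
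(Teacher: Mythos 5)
Your construction of the measures $\mathbb{Q}^x=\rho^x_\cdot\,\widetilde{\mathbb{P}}$ and the verification that $(Q_t)_{t\ge 0}$ is a Markovian semigroup on $M$ with continuous sample paths are fine, and they reproduce the elementary part of the argument. The genuine gap is in the last step: you obtain a Markov family with continuous paths and the correct transition function, and then assert that ``strong Feller type bounds'' for $(Q_t)$ or $\mathcal{V}$ together with ``standard perturbation/canonization results'' upgrade it to a \emph{right} Markov process. This is exactly the step that cannot be dispatched this way. First, the Lipschitz strong Feller property of $\mathcal{V}$ is only established later (Theorem \ref{thm 3.13}), so invoking it here risks circularity. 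Second, and more seriously, regularity of the resolvent is simply not sufficient to produce a right process: Corollary \ref{rem 3.16} in the Appendix exhibits a $C^\infty_b$-strong Feller resolvent satisfying (H') that admits no associated right process. So the right-process property (strong Markov property, a.s.\ right continuity of $\alpha$-excessive functions along the paths) has to come from somewhere else.

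The paper closes this gap by a different mechanism: it starts from the right process $X=(X(t),\mathbb{P}^x)$ already constructed on $M$ in the proof of Theorem \ref{thm 3.4}, observes that the associated cylindrical Wiener process $W=\sum_k e_k\beta_k$ is an \emph{additive functional} of $X$, so that
$M(t)=\exp\bigl(\int_0^t\langle B(X(s)),dW(s)\rangle-\tfrac12\int_0^t|B|^2(X(s))\,ds\bigr)$
is a perfect martingale multiplicative functional of $X$; the general transformation theory for right processes by such functionals (Sharpe, Ch.\ VII, \S 62) then yields directly that $\widetilde{Q}_tf(x):=\mathbb{E}^x\{f(X(t))M(t)\}$ is the transition function of a right Markov process with continuous paths in $M$. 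The remaining point, which your proposal also glosses over, is that this $\widetilde{Q}_t$ is built from the canonical process $(X,\mathbb{P}^x)$ whereas $Q_t$ in (\ref{eq 3.7}) is built from the generalized solution $(X^x,\widetilde{\mathbb{P}})$ on a different stochastic basis; the identification $\widetilde{Q}_t=Q_t$ requires joint uniqueness in law for equation (\ref{eq 3.1}) with $B\equiv 0$ started at $x\in M$, which follows from pathwise uniqueness via the Yamada--Watanabe type results of \cite{On04}. If you want to salvage your route, you would have to supply an argument of comparable strength for the right-process property; the multiplicative-functional theorem is the missing idea.
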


\begin{proof}
Let $(\Omega,\mathcal{F}, \mathcal{F}_t,  X(t) ,\mathbb{P}^{x} )$ 
be the continuous Markov process on $M$ obtained in the first part of the proof of \thnameref{thm 3.4}, and let $W:=\mathop{\sum}\limits_{k\geq 1}e_k\beta_k$ be it's associated cylindrical Wiener process under $\mathbb{P}^{x}$ for each $x\in M$ (see (\ref{eq 3.4})), so that $(X, W, \mathbb{P}^{x})$ is a (continuous) solution to equation (\ref{eq 3.1}).
By construction, $W$ is an additive functional of $X$ (see (\ref{eq af}) below) in the sense of \cite{BlGe68} or \cite{Sh88}. 
Then
$$
M(t):=e^{\int_0^{t}\langle B(X(s))dW(s) \rangle-\frac{1}{2}\int_0^{t}|B|^{2}(X(s))ds}
$$
is a perfect martingale additive functional, hence
$$
\widetilde{Q}_tf(x):=\mathbb{E}^{x}\{f(X(t)) M(t)\}
$$
defines a Markovian semigroup which is the transition function of a right Markov process with continuous paths in $M$; for details, see \cite{Sh88}, Chapter VII, Section 62.

On the other hand, since on $M$ we have pathwise uniqueness for equation (\ref{eq 3.1}), with $B\equiv 0$, it follows  by \cite{On04} that joint uniqueness in law also holds for any initial distribution $\delta_x$, $x\in M$, so that $((X(t))_{t\geq0}, W, \mathbb{P}^{x})$ has the same law as $(((X(t,x))_{t\geq 0}, W, \widetilde{\mathbb{P}}))$.
Hence $\widetilde{Q}_t=Q_t$ for all $t\geq 0$ and the statement is proved.
\end{proof}

\begin{rem} \label{rem 3.7}
Concerning the existence of $Y$, we point out that the proof of the previous proposition doesn't work if $x\in H_0\setminus M$, because $W$ and hence $(M(t))_{t\geq 0}$ do not make sense under $\mathbb{P}^{x}$; in fact, this was the main technical difficulty which we had to deal with in the proof of \thnameref{thm 3.4}, because of the lack of integrability of $F_0(X(t))$ under $\mathbb{P}^{x}\otimes dt$ (see (\ref{eq 3.4}) and the subsequent remarks).
\end{rem}

\begin{lem} \label{lem 3.6} There exist a sequence of continuous functions $B_n:H_0\rightarrow H_0, n\geq 1$ such that $\lim_nB_n=B$ $\nu$-a.e., and $|B_n|_\infty \leq |B|_\infty$.

Further, if $Q_t^{n}$ and $\rho_t^{n,x}$ are given by (\ref{eq 3.7}) resp. (\ref{eq 3.8}) with $B$ replaced by $B_n$, then 

\begin{equation} \label{eq 3.9}
\mathbb{E}\{(\rho_t^{n,x})^{2}\}\leq e^{t|B|_\infty^{2}}, n\geq 1
\end{equation}
\begin{equation} \label{eq 3.10}
\lim\limits_n\rho_t^{n,x}=\rho_t^{x} \mbox{ in } L^{1}(\widetilde{\mathbb{P}}),
\end{equation}
\begin{equation} \label{eq 3.11}
\lim\limits_k\rho_t^{n,x_k}=\rho_t^{n,x} \mbox{ in } L^{1}(\widetilde{\mathbb{P}}), n\geq 1
\end{equation}
for all $H_0 \ni (x_k)_{k\geq 1} \mathop{\to}\limits_k x$ in $H_0$.
In particular, $Q_t^{n}$ is Feller and
\begin{equation} \label{eq 3.12}
\lim\limits_nQ_t^{n}f=Q_tf \mbox{ pointwise on } H_0.
\end{equation}
\end{lem}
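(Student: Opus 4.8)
The plan is to construct the $B_n$ by a standard mollification-plus-truncation argument and then to obtain the three displayed limits from a combination of uniform integrability (coming from the Novikov-type bound \eqref{eq 3.9}) and continuity in the starting point of the generalized solution $X^x$. First I would construct the approximating sequence: since $B\colon H_0\to H_0$ is bounded and measurable, one can find continuous functions $B_n\colon H_0\to H_0$ with $|B_n|_\infty\le|B|_\infty$ and $B_n\to B$ $\nu$-a.e.\ — for instance by convolving $B$ (extended suitably off $H_0$) with a finite-dimensional Gaussian or mollifier family along the eigenbasis $(e_k)$ of $\widetilde A$, keeping only finitely many coordinates, and truncating in norm; the $\nu$-a.e.\ convergence along a subsequence is guaranteed because $H_0=\mathrm{supp}(\nu)$ and $\nu\ll N(0,\tfrac12 A^{-1})$ (Remark \ref{rem 3.3}). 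Here one should note that $\nu$-a.e.\ convergence is all that is available and all that is needed, since the expectations in \eqref{eq 3.7}--\eqref{eq 3.8} are taken under the law of $X^x(s)$, which for $x\in M$ has marginals absolutely continuous w.r.t.\ $\nu$ (invariance of $\nu$), and for $x\in H_0\setminus M$ one passes to the limit through points of $M$.

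For \eqref{eq 3.9}: I would use that $\rho^{n,x}_t$ is an exponential martingale, so
$\mathbb{E}\{(\rho^{n,x}_t)^2\}=\mathbb{E}\{e^{2\int_0^t\langle B_n(X^x(s))\,d\widetilde W(s)\rangle-2\int_0^t|B_n|^2(X^x(s))ds}\}
=\mathbb{E}\{e^{\int_0^t|B_n|^2(X^x(s))ds}\,\mathcal{E}_t\}$
where $\mathcal{E}_t$ is the Girsanov exponential with integrand $2B_n(X^x(\cdot))$, which is again a genuine martingale by Novikov (the integrand is bounded); hence the expectation is bounded by $e^{t|B_n|_\infty^2}\le e^{t|B|_\infty^2}$. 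This bound is uniform in $n$ and in $x$, and it immediately yields $L^2$-boundedness, hence uniform integrability, of the families $(\rho^{n,x}_t)_n$ and $(\rho^{n,x_k}_t)_k$; the same argument applied to $B$ itself gives uniform integrability of $(\rho^{x_k}_t)_k$. For \eqref{eq 3.10}: since $\rho^{n,x}_t=\exp(\xi^n_t)$ with $\xi^n_t:=\int_0^t\langle B_n(X^x(s))\,d\widetilde W(s)\rangle-\tfrac12\int_0^t|B_n|^2(X^x(s))ds$, and since $\int_0^t|B_n(X^x(s))-B(X^x(s))|^2\,ds\to 0$ in $L^1(\widetilde{\mathbb P})$ by dominated convergence (using $B_n\to B$ $\nu$-a.e., the bound $|B_n|_\infty\le|B|_\infty$, and that the time-marginals of $X^x$ are $\nu$-absolutely continuous), the Itô isometry gives $\xi^n_t\to\xi_t$ in probability, hence $\rho^{n,x}_t\to\rho^x_t$ in probability, and uniform integrability upgrades this to convergence in $L^1(\widetilde{\mathbb P})$. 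For \eqref{eq 3.11}, with $n$ fixed: here I would exploit the continuity of $B_n$ together with the pathwise continuity of $x\mapsto X^x$ established in the proof of Theorem \ref{thm 3.4} (the map $M\ni x\mapsto X^x\in L^\infty(\Omega,C([0,T];H_0))$ is Lipschitz and extends continuously to $H_0$), so that $X^{x_k}\to X^x$ uniformly on $[0,t]$ in probability; then $B_n(X^{x_k}(\cdot))\to B_n(X^x(\cdot))$ and $\int_0^t|B_n(X^{x_k}(s))-B_n(X^x(s))|^2ds\to 0$ in probability, and as before the Itô isometry plus uniform integrability give $\rho^{n,x_k}_t\to\rho^{n,x}_t$ in $L^1(\widetilde{\mathbb P})$.

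Finally, the Feller property of $Q^n_t$ and \eqref{eq 3.12} follow by assembling these pieces. For Feller: if $f\in C_b(H_0)$ and $x_k\to x$, write $Q^n_tf(x_k)-Q^n_tf(x)=\mathbb{E}\{f(X^{x_k}(t))(\rho^{n,x_k}_t-\rho^{n,x}_t)\}+\mathbb{E}\{(f(X^{x_k}(t))-f(X^x(t)))\rho^{n,x}_t\}$; the first term is bounded by $|f|_\infty\,\mathbb{E}|\rho^{n,x_k}_t-\rho^{n,x}_t|\to0$ by \eqref{eq 3.11}, and the second tends to $0$ by continuity of $f$, the convergence $X^{x_k}(t)\to X^x(t)$ in probability, boundedness of $f$, and uniform integrability of $(\rho^{n,x}_t)$ (here just an $L^2$ bound on a single random variable). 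For \eqref{eq 3.12}: $Q^n_tf(x)-Q_tf(x)=\mathbb{E}\{f(X^x(t))(\rho^{n,x}_t-\rho^x_t)\}$, bounded by $|f|_\infty\,\mathbb{E}|\rho^{n,x}_t-\rho^x_t|\to0$ by \eqref{eq 3.10}. I expect the main obstacle to be the bookkeeping needed to justify the dominated-convergence step $\int_0^t|B_n(X^x(s))-B(X^x(s))|^2ds\to0$ rigorously for \emph{every} $x\in H_0$ and not merely $\nu$-a.e.\ $x$: for $x\in M$ one uses that the law of $X^x(s)$ is dominated by a constant times $\nu$ (by invariance and the Harnack/strong Feller estimates of Theorem \ref{thm 3.2}), while for $x\in H_0\setminus M$ one must route the argument through the continuous-in-$x$ extension and a limiting procedure, taking care that the $\nu$-a.e.\ convergence $B_n\to B$ is not disturbed by this passage to the limit.
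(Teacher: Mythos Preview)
Your overall architecture is correct and matches the paper's: uniform $L^2$ bound on the exponential martingales, convergence in probability of the exponents via It\^o isometry, then uniform integrability to upgrade to $L^1$; and the Feller/pointwise statements follow by the same splitting you wrote. Two points where the paper is sharper and which dissolve exactly the ``main obstacle'' you flag at the end:

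\textbf{Construction of $B_n$.} Instead of an ad hoc mollification in finitely many coordinates, the paper sets
\[
B_{k,\alpha}(x):=\sum_{i=1}^{k}\alpha U_\alpha\bigl(\langle B(\cdot),e_i\rangle\bigr)(x)\,e_i,
\]
and takes a diagonal subsequence $B_n:=B_{n,\alpha(n)}$ converging to $B$ $\nu$-a.e. Continuity of each $B_n$ is then automatic from the strong Feller property of $\mathcal U$, and the bound $|B_n|_\infty\le|B|_\infty$ follows from Jensen's inequality applied to $\alpha U_\alpha$. This is cleaner than mollification in infinite dimensions and uses only objects already present.

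\textbf{The convergence $\mathbb E\int_0^t|B_n-B|^2(X^x(s))\,ds\to0$ for every $x\in H_0$.} You try to argue via absolute continuity of the time-marginals $P_s(x,\cdot)$ with respect to $\nu$, invoking invariance and Harnack for $x\in M$ and then a limiting procedure for $x\in H_0\setminus M$. The paper bypasses this entirely: it simply bounds
\[
\mathbb E\int_0^t|B_n-B|^2(X^x(s))\,ds\;\le\;e^{t}\,U_1\bigl(|B_n-B|^2\bigr)(x),
\]
and uses that $\nu$ is a \emph{reference measure} for $\mathcal U$ on all of $H_0$, so $U_1(x,\cdot)\ll\nu$ for \emph{every} $x\in H_0$. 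Since $|B_n-B|^2\to0$ $\nu$-a.e.\ and is bounded, dominated convergence in the measure $U_1(x,\cdot)$ gives $U_1(|B_n-B|^2)(x)\to0$ for each $x$, with no case distinction between $M$ and $H_0\setminus M$ and no appeal to Harnack or invariance. This is the one genuine simplification you should take on board; the rest of your plan goes through as written.
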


\begin{proof}
To prove the first statement, let $(e_k)_{k\geq 1}$ be an orthonormal basis of $H$
and set $B_{k,\alpha}(x):=\sum\limits_{i=1}^{k}\alpha U_\alpha(\langle B(\cdot), e_i\rangle)(x) e_i$ for all $x\in H_0$, $k\geq 1$ and $\alpha >0$. 
Clearly, for each $k\geq 1$ we have that $\lim\limits_{k\to \infty}\lim\limits_{\alpha \to \infty}B_{k,\alpha} = B$ in $L^{2}(\nu)$, hence, by a diagonal argument, there exists a subsequence $(\alpha(k))_{k\geq 1}$ such that $B_k:=B_{k,\alpha(k)}$ converges to $B$ $\nu$-a.e. 
Moreover, each $B_k$ is continuous by the strong Feller property of $\mathcal{U}$, and
\begin{align*}
|B_k(x)|^{2} &= \sum\limits_{i=1}^{k}|\alpha(k) U_{\alpha(k)}(\langle B(\cdot), e_i\rangle)(x)|^{2} \leq\alpha(k) U_{\alpha(k)}\Big( \sum\limits_{i=1}^{k}|\langle B(\cdot), e_i\rangle|^{2}\Big)(x)\\
& \leq \alpha(k) U_{\alpha(k)}\Big(|B(\cdot)|^{2}\Big)(x) \leq |B|_\infty^{2}.
\end{align*}

Further, by $d\rho_t^{n,x}=\langle\rho_t^{n,x}B_{n}(X(t,x)), dW(t)\rangle$ and It\^o isometry we have
$$
\mathbb{E}\{(\rho_t^{n,x})^{2}\}=1+\int_0^{t}\mathbb{E}\{(\rho_s^{n,x})^{2} |B_{n}(X(s,x))|^{2} \} ds \leq 1+|B|_\infty^{2} \int_0^{t}\mathbb{E}\{(\rho_s^{n,x})^{2} \} ds,
$$
so the estimate in (\ref{eq 3.9}) follows by Gronwall's lemma.

We show now at the same time that both (\ref{eq 3.10}) and (\ref{eq 3.12}) hold, pointing out in advance that below we use $|e^{a}-e^{b}|\leq (e^{a}+e^{b})|a-b|$ for the second inequality, the triangle inequality and It\^o isometry for the third inequality, and the bound in (\ref{eq 3.9}) and Cauchy-Schwarz inequality for the fourth one:
\begin{align*}
|Q_t&^{n}f(x)-Q_tf(x)| \leq |f|_\infty \mathbb{E}\Big\{|\rho_t^{n,x}-\rho_t^{x}|\Big\}\\
&\leq |f|_\infty \mathbb{E}\Big\{(\rho_t^{n,x}+\rho_t^{x})\Big|\int_0^{t}\langle (B_{n}-B)(X(s,x)) dW_s \rangle-\frac{1}{2}\int_0^{t}(|B_n|^{2}-|B|^{2})(X(s,x))ds\Big|\Big\}\\
& \leq  |f|_\infty |\rho_t^{n,x}+\rho_t^{x}|_{L^{2}(\mathbb{P})}\Big(\mathbb{E}\Big\{\int_0^{t}|B_{n}-B|^{2}(X(s,x)) ds\Big\}^{\frac{1}{2}}+\frac{1}{2}\Big|\int_0^{t}(|B_n|^{2}-|B|^{2})(X(s,x))ds\Big|_{L^{2}(\widetilde{\mathbb{P}})}\Big)\\
& \leq 2 |f|_\infty e^{\frac{t}{2}|B|_\infty^{2}} \Big(\mathbb{E}\Big\{\int_0^{t}|B_{n}-B|^{2}(X(s,x)) ds\Big\}^{\frac{1}{2}}+\sqrt{t}|B|_\infty\mathbb{E}\Big\{\int_0^{t}|B_{n}-B|^{2}(X(s,x)) ds\Big\}^{\frac{1}{2}}\Big)\\
& = 2 |f|_\infty e^{\frac{t}{2}|B|_\infty^{2}} (1+\sqrt{t}) \mathbb{E}\Big\{\int_0^{t}|B_{n}-B|^{2}(X(s,x)) ds\Big\}^{\frac{1}{2}}\\
&\leq 2 |f|_\infty e^{\frac{t}{2}|B|_\infty^{2}} (1+\sqrt{t}) e^{\frac{t}{2}}(U_1(|B_{n}-B|^{2})(x))^{\frac{1}{2}}.
\end{align*}
Hence (\ref{eq 3.10}) and (\ref{eq 3.12}) follow by dominated convergence, due to the first part of Lemma \ref{lem 3.6} and the fact that $\nu$ is a reference measure.

To prove (\ref{eq 3.11}), let $x_k\mathop{\rightarrow}\limits_k x$ in $H_0$ and fix $n \geq 1$. 
As above, we get
\begin{align*}
\mathbb{E}\Big\{|\rho_t^{n,x_k}-\rho_t^{n, x}|\Big\}
&\leq |f|_\infty \mathbb{E}\Big\{(\rho_t^{n,x_k}+\rho_t^{n,x})\Big|\int_0^{t}\langle- (B_{n}(X(s,x_k)) -B_{n}(X(s,x)) )dW(s) \rangle\\
&\quad -\frac{1}{2}\int_0^{t}(|B_n|^{2}(X(s,x_k)) -|B_n|^{2}(X(s,x)))ds\Big|\Big\}\\
&\leq 2 |f|_\infty e^{\frac{t}{2}|B|_\infty^{2}} (1+\sqrt{t}) \mathbb{E}\Big\{\int_0^{t}|B_{n}(X(s,x_k)) -B_{n}(X(s,x))|^{2} ds\Big\}^{\frac{1}{2}}\\
\end{align*}
 and the convergence follows by dominated convergence.

To show that $Q_t^{n}$ is Feller, let $f$ be bounded and continuous, and $x_k\mathop{\rightarrow}\limits_k x$ in $H_0$.
Then
\begin{align*}
|Q_t^{n}f(x_k)-Q_t^{n}f(x)| &\leq |f|_\infty \mathbb{E}\Big\{|\rho_t^{n,x_k}-\rho_t^{n,x}|\Big\}+\mathbb{E}\Big\{|f(X(t,x_k))-f(X(t,x))|\rho_t^{n,x}\Big\}\\
& \leq |f|_\infty \mathbb{E}\Big\{|\rho_t^{n,x_k}-\rho_t^{n,x}|\Big\}+\mathbb{E}\Big\{|f(X(t,x_k))-f(X(t,x))|^{2}\Big\}^{\frac{1}{2}}\mathbb{E}\Big\{(\rho_t^{n,x})^{2}\Big\}^{\frac{1}{2}}
\end{align*}
which converges to $0$ when $k$ tends to $\infty$, by bounded convergence, (\ref{eq 3.9}) and (\ref{eq 3.11})
\end{proof}

\begin{prop} \label{prop 2.4}
$(Q_t)_{t\geq 0}$ is a Markov transition function on $H_0$.
\end{prop}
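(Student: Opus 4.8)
The plan is to verify the three defining properties of a Markov transition function on $H_0$: (a) for every $t\ge 0$, $Q_t$ maps $b\mathcal B(H_0)$ into itself and $Q_t\mathbf 1=\mathbf 1$; (b) $Q_0=\mathrm{Id}$; and (c) the Chapman--Kolmogorov identity $Q_{t+s}f=Q_t(Q_sf)$ on all of $H_0$. Property (b) and the normalization in (a) are immediate: $X^x(0)=x$, $\rho_0^x=1$, and since $B$ is bounded, Novikov's criterion makes each $(\rho_t^x)_t$ a genuine $\widetilde{\mathcal F}_t$-martingale with $\mathbb E^{\widetilde{\mathbb P}}\{\rho_t^x\}=1$; hence $0\le \lvert Q_tf\rvert\le\lvert f\rvert_\infty$ and $Q_t\mathbf 1=\mathbf 1$. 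The boundedness part of (a) is thus clear, and the (Borel) measurability in $t$, if one wants it, follows from the $t$-continuity of $X^x(\cdot)$ and of $\rho^x_\cdot$ together with dominated convergence.

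For the measurability in $x$ required in (a) I would invoke Lemma~\ref{lem 3.6}: for $f\in C_b(H_0)$ one has $Q_t^nf\in C_b(H_0)$ (the Feller property established there) and $Q_t^nf\to Q_tf$ pointwise on $H_0$ by \eqref{eq 3.12}, so $Q_tf$ is of first Baire class, in particular Borel measurable. A functional monotone class argument then upgrades this to $Q_t(b\mathcal B(H_0))\subset b\mathcal B(H_0)$: the bounded functions $f$ for which $Q_tf$ is measurable form a vector space containing $C_b(H_0)$ and stable under bounded monotone limits (monotone convergence, using $Q_t\mathbf 1=\mathbf 1<\infty$), hence it is all of $b\mathcal B(H_0)$; equivalently, $Q_t(x,A):=\mathbb E^{\widetilde{\mathbb P}}\{\mathbf 1_A(X^x(t))\,\rho_t^x\}$ is a genuine kernel.

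The heart of the matter is the Chapman--Kolmogorov identity, which I would split according to the starting point. For $x\in M$ it is contained in Proposition~\ref{prop 3.5}, since there $(Q_t)$ is the transition function of a right Markov process on $M$ which, moreover, never leaves $M$. For $x\in H_0\setminus M$ and $t,s>0$ the idea is to combine the multiplicative structure of the Girsanov density with the Markov property of the generalized solution. Write $\rho_{t+s}^x=\rho_t^x\,\rho_{t,t+s}^x$, where $\rho_{t,t+s}^x=\exp\{\int_t^{t+s}\langle B(X^x(r)),d\widetilde W(r)\rangle-\tfrac12\int_t^{t+s}\lvert B\rvert^2(X^x(r))\,dr\}$. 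By Theorem~\ref{thm 3.4}, $X^x(t)\in M$ $\widetilde{\mathbb P}$-a.s.\ for $t>0$ and, conditionally on $\widetilde{\mathcal F}_t$, the pair $\bigl(X^x(t+r),\ \widetilde W(t+r)-\widetilde W(t)\bigr)_{r\ge0}$ is a solution of \eqref{eq 3.1} with $B\equiv0$ (in the sense of \cite{On04}) started at $X^x(t)$; since pathwise uniqueness, hence joint uniqueness in law by \cite{On04}, holds on $M$, this pair has, given $\widetilde{\mathcal F}_t$, the law of $(X^y,\widetilde W)$ under $\widetilde{\mathbb P}$ evaluated at $y=X^x(t)\in M$. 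Weighting by the exponential density exactly as in the proof of Proposition~\ref{prop 3.5} gives
\[
\mathbb E^{\widetilde{\mathbb P}}\bigl[f(X^x(t+s))\,\rho_{t,t+s}^x\mid \widetilde{\mathcal F}_t\bigr]=Q_sf(X^x(t))\qquad \widetilde{\mathbb P}\text{-a.s.},
\]
where the right-hand side is the value of $Q_s$ on $M$ (which coincides with the kernel $\widetilde Q_s$ of Proposition~\ref{prop 3.5}). Multiplying by the $\widetilde{\mathcal F}_t$-measurable factor $\rho_t^x$, taking $\widetilde{\mathbb P}$-expectation and using the tower property yields $Q_{t+s}f(x)=\mathbb E^{\widetilde{\mathbb P}}\{\rho_t^x\,Q_sf(X^x(t))\}=Q_t(Q_sf)(x)$; the cases $t=0$ or $s=0$ are trivial.

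I expect the main obstacle to be the rigorous justification of the conditional Markov/Girsanov statement used above, namely that after any time $t>0$ the generalized solution $X^x$ "restarts", jointly with the incremented cylindrical Wiener process $\widetilde W(t+\cdot)-\widetilde W(t)$, as a genuine solution issued from $X^x(t)\in M$. This rests on the polarity $\widetilde{\mathbb P}(X^x(t)\in M\text{ for all }t>0)=1$ and the post-$\varepsilon$ solution property of Theorem~\ref{thm 3.4}(ii.3), on the passage from the right process $X$ of Theorem~\ref{thm 3.4} to the generalized solution $X^x$ through equality of laws on path space (in its joint version with the driving noise), and on the joint-uniqueness-in-law consequence of pathwise uniqueness on $M$ from \cite{On04}; once these are in place the identity above is merely the tower property and the remainder is bookkeeping. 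If one prefers to avoid the conditional argument, an alternative is to first establish that each $Q_t^n$ is a transition function by the same scheme applied to the continuous drift $B_n$, and then pass to the limit using \eqref{eq 3.12}, the $L^1$-convergence $\rho_t^{n,x}\to\rho_t^x$ of \eqref{eq 3.10} and a Vitali-type uniform-integrability argument; note, however, that this does not genuinely circumvent the difficulty, since the continuity of $B_n$ does not by itself turn $X^x$ into a classical solution off $M$.
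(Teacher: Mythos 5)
Your reduction to the Chapman--Kolmogorov identity and the measurability/normalization bookkeeping are fine, but the route you choose for the crucial case $x\in H_0\setminus M$ has a genuine gap at exactly the step you yourself flag as ``the main obstacle''. The identity
$\mathbb E^{\widetilde{\mathbb P}}\bigl[f(X^{x}(t+s))\,\rho_{t,t+s}^{x}\mid \widetilde{\mathcal F}_t\bigr]=Q_sf(X^{x}(t))$
rests on the claim that, conditionally on $\widetilde{\mathcal F}_t$, the pair $\bigl(X^{x}(t+\cdot),\widetilde W(t+\cdot)-\widetilde W(t)\bigr)$ is a solution of \eqref{eq 3.1} (case $B\equiv 0$) issued from $X^{x}(t)$ --- for the \emph{generalized} solution started at a bad point $x$, under $\widetilde{\mathbb P}$. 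This is not available: Theorem \ref{thm 3.4}, (ii.3) is proved for the canonical right process $X$ under $\mathbb P^{x}$, using the polarity of $H_0\setminus M$ and potential-theoretic arguments, and transferring it to the generalized solution \emph{jointly with the driving noise}, and moreover in conditional form, is precisely where the difficulty of the bad starting points sits (the obstruction being the integrability of $F_0(X^{x}(\cdot))$ near time $0$; one cannot simply pass to the limit in the drift integral along $M\ni x_k\to x$). One would further need a conditional version of joint uniqueness in law to identify the regular conditional distribution with the law of $(X^{y},\widetilde W)$ at $y=X^{x}(t)$. None of this is supplied, so the argument as written does not close.

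The alternative you mention in your last sentence --- prove the identity for each $Q^{n}_t$ and pass to the limit --- is in fact the paper's proof, and your reason for dismissing it rests on a misreading of what the approximation is for. The continuity of $B_n$ is \emph{not} used to turn $X^{x}$ into a classical solution off $M$; it is used to make $Q^{n}_t$ Feller (Lemma \ref{lem 3.6}). One first gets, for $x\in M$, the identity \eqref{eq 3.13}, i.e. $Q^{n}_{t+s}f(x)=\mathbb E\{\rho_t^{n,x}Q^{n}_sf(X^{x}(t))\}$, from Proposition \ref{prop 3.5} applied to $B_n$, where the process genuinely is a right Markov process. Then, for $f\in C_b(H_0)$, \emph{both sides} of this identity are continuous in $x$ --- the left side by the Feller property of $Q^{n}_{t+s}$, the right side by \eqref{eq 3.11}, the Lipschitz dependence of $x\mapsto X^{x}$ and the Feller property of $Q^{n}_s$ --- so the identity extends from the dense set $M$ to all of $H_0$ with no solution property off $M$ ever being invoked. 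Finally one lets $n\to\infty$ using \eqref{eq 3.10} and \eqref{eq 3.12}, and concludes for general $f$ by a monotone class argument. This continuity-plus-density mechanism is the whole point of introducing the $B_n$, and it cleanly avoids the conditional restart problem on which your primary route founders.
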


\begin{proof}
Let $B_n$ and $Q_t^{n}, n\geq 1$ be as in Lemma \ref{lem 3.6}.
 By Proposition \ref{prop 3.5} applied to $B_n$, for each $x \in M$
\begin{equation} \label{eq 3.13}
Q^{n}_{t+s}f(x)=\mathbb{E}\{\rho_t^{x}Q^{n}_sf(X(t,x))\}.
\end{equation}
To extend (\ref{eq 3.13}) to the entire $H_0$, let $M\ni x_k\mathop{\to}\limits_k x\in H_0\setminus M$ and $f$ be bounded and continuous. Then
$$
Q^{n}_{t+s}f(x)=\lim\limits_k Q^{n}_{t+s}f(x_k)=\lim\limits_k \mathbb{E}\{\rho_t^{n,x_k}Q^{n}_sf(X(t,x_k))\}=\mathbb{E}\{\rho_t^{n,x}Q^{n}_sf(X(t,x_k))\}=Q^{n}_tQ^{n}_sf(x),
$$ 
where the first and the third equalities follow by the Feller property of $Q^{n}_t$ and by (\ref{eq 3.11}) from Lemma \ref{lem 3.6}.

To conclude, if $x\in H_0$ then
$$
Q_{t+s}f(x)=\lim\limits_n Q^{n}_{t+s}f(x)=\lim\limits_n \mathbb{E}\{\rho_t^{n,x}Q^{n}_sf(X(t,x))\}=\mathbb{E}\{\rho_t^{x}Q_sf(X(t,x))\}=Q_tQ_sf(x),
$$
where for the first and the third identities we used Lemma \ref{lem 3.6}, (\ref{eq 3.10}) and (\ref{eq 3.12}).
The case of general $f$ follows now by a monotone class argument.


\end{proof}

We crucially need the following It\^o formula which is a refinement of \cite{DaFlRoVe16}, Corollary 3.14, in the sense that we can prove it for all starting points $x\in M$, there is no need to exclude a further $\nu$-inessential set. Also, our proof is a bit simpler and uses more general arguments.
 
First of all, we recall that by \cite{DaFlRoVe16}, Lemma 3.6, and Remark \ref{rem 3.3}, any Lipschitz function $\varphi:H\rightarrow \mathbb{R}$ is Gateaux differentiable $\nu$-a.e.; we denote its derivative by $\nabla \varphi$.
\begin{prop} \label{prop 3.8} Let $X$ be the right Markov process and $M$ the set with polar complement provided by \thnameref{thm 3.4}.
Also, let $f=U_\alpha g$ for some $\alpha >0$ and $g\in b\mathcal{B}(H_0)$.
Then the following It\^o formula holds $\mathbb{P}^{x}$-a.s. for all $x\in M$:
\begin{equation}\label{eq 3.14}
f(X(t))-f(x)-\int_0^{t}Lf(X(s)) ds=\int_0^{t}\langle \nabla f(X(s)), dW(s)\rangle \quad t\geq 0.
\end{equation}
\end{prop}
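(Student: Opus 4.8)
The plan is to establish the Itô formula \eqref{eq 3.14} by first proving it for the dense class of smooth cylinder functions $\mathcal{F}\mathcal{C}_b^2$, where classical stochastic calculus applies, and then passing to the limit using the resolvent structure $f = U_\alpha g$ together with the martingale problem already solved in Theorem \ref{thm 3.4}. The key observation is that for $f = U_\alpha g$ we have $Lf = \alpha f - g$, so $Lf \in b\mathcal{B}(H_0)$ is bounded, and hence the integral $\int_0^t Lf(X(s))\,ds$ is well-defined and continuous $\mathbb{P}^x$-a.s. for \emph{every} $x\in H_0$ (not just $x\in M$), because $\nu$ is a reference measure and the strong Feller property forces a $\nu$-null function composed with $X$ to integrate to zero (exactly the argument used in the proof of Theorem \ref{thm 3.4}(i)). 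So the left-hand side of \eqref{eq 3.14} is a well-behaved process and, by (ii.2) of Theorem \ref{thm 3.4} and the fact that $f\in D_0$, it is a continuous $(\mathcal{F}_t)$-martingale under $\mathbb{P}^x$ for $x\in M$.

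The main step is then to identify this martingale with the stochastic integral $\int_0^t \langle \nabla f(X(s)), dW(s)\rangle$. First I would record that $\nabla f$ makes sense $\nu$-a.e. by \cite{DaFlRoVe16}, Lemma 3.6 and Remark \ref{rem 3.3}, since $f = U_\alpha g$ is Lipschitz by the Lipschitz strong Feller property (Theorem \ref{thm 3.2}(i.1)); moreover one has the gradient estimate $|\nabla f| \le |\nabla U_\alpha g|$ which is bounded (by \eqref{estimate}-type bounds, or directly from the Lipschitz constant), so the integrand is in $L^2(\nu)$ and the stochastic integral against the cylindrical Wiener process $W = \sum_k e_k\beta_k$ from \eqref{eq 3.6} is well-defined; here again one must check the integral does not depend on the $\nu$-version of $\nabla f$, which follows from the occupation-time argument above. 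To identify the two martingales it suffices to compute the quadratic variation of the left-hand side martingale $N(t):= f(X(t)) - f(x) - \int_0^t Lf(X(s))\,ds$ and show $[N](t) = \int_0^t |\nabla f(X(s))|^2\,ds$, together with the cross-variation with each $\beta_k$, namely $[N,\beta_k](t) = \int_0^t \langle \nabla f(X(s)), e_k\rangle\,ds$; then $N(t) - \int_0^t\langle \nabla f(X(s)), dW(s)\rangle$ is a continuous local martingale orthogonal to every $\beta_k$ and with zero quadratic variation, hence identically zero.

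To compute these (co)variations I would approximate: take $\varphi_n \in \mathcal{F}\mathcal{C}_b^2$ with $\varphi_n \to f$, $L\varphi_n \to Lf$ in $L^2(\nu)$ (possible since $\mathcal{F}\mathcal{C}_b^2$ is a core for $L$ by Remark \ref{rem 3.3}), and for such cylinder functions the classical Itô formula along the weak solution $(X,W,\mathbb{P}^x)$ gives \eqref{eq 3.14} with $\nabla\varphi_n$ in place of $\nabla f$ — this is the standard finite-dimensional chain rule applied to $\phi(\varphi_{k_1},\dots,\varphi_{k_m})$, using that each $\beta_{k}(t) = \langle e_k, W(t)\rangle$ and $[\beta_k,\beta_{k'}] = \delta_{kk'}t$. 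One then controls the difference: the drift terms converge by the $L^2(\nu)$-convergence of $L\varphi_n$ and the occupation-time/reference-measure bound $\mathbb{E}^x\int_0^t h(X(s))\,ds \le C_{t,x}\, |U_1 h|(x)$ derived via the Harnack inequality \eqref{eq 3.2} (as in the proof of Theorem \ref{thm 3.4}(ii.3)), which is finite for $x\in M$ by definition of $M$; the stochastic integral terms converge in $L^2$ provided $\int_0^t |\nabla\varphi_n - \nabla f|^2(X(s))\,ds \to 0$, which again reduces to $L^2(\nu)$-convergence of the gradients — and here is where the closability/core machinery of $L$ on $L^2(\nu)$ enters, since convergence of $\varphi_n$ and $L\varphi_n$ in graph norm implies (via the Dirichlet-form-type energy identity $\int |\nabla\varphi|^2 d\nu = -\int \varphi L\varphi\, d\nu$ up to the right constant and lower order terms from $\omega$) convergence of $\nabla\varphi_n$ in $L^2(\nu;H)$.

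The hard part, I expect, is exactly this last identification of $\nabla f$ as the $L^2(\nu)$-limit of $\nabla\varphi_n$ and the justification that the stochastic integrals converge \emph{pointwise in $x\in M$} rather than merely $\nu$-a.e. — the whole point of the proposition being the pointwise statement. This forces one to combine the $L^2(\nu)$ analytic estimates with the potential-theoretic fact that a finely open $\nu$-negligible set is empty intersected with $M$ (so that an $\nu$-a.s. identity between finely continuous, or resolvent-regularized, quantities upgrades to an everywhere identity on $M$), plus the Harnack-based integrability bound to make Fatou/dominated convergence work uniformly enough. Once the cross-variation and quadratic variation identities are in hand, the conclusion that a continuous local martingale with vanishing quadratic variation is constant (hence zero, since it starts at zero) finishes the proof.
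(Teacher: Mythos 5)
Your overall strategy coincides with the paper's: approximate $f=U_\alpha g$ by cylinder functions $f_n\in\mathcal{F}\mathcal{C}_b^{2}$ in the graph norm of $L$ (with the Dirichlet-form identity giving $\nabla f_n\to\nabla f$ in $L^{2}(\nu;H)$ -- a point you actually justify more explicitly than the paper does), apply the classical finite-dimensional It\^o formula along the weak solution, and pass to the limit using It\^o isometry and the $(P_t)$-invariance of $\nu$. This yields \eqref{eq 3.14} $\mathbb{P}^{\nu}$-a.s., i.e. for $\nu$-a.e.\ starting point. Your detour through quadratic variations and cross-variations $[N,\beta_k]$ is not needed (one can simply subtract the two sides after the limit passage), and it does not buy you anything, because computing $[N](t)=\int_0^t|\nabla f(X(s))|^{2}ds$ pointwise in $x\in M$ runs into exactly the same difficulty as the identity itself.

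The genuine gap is in the upgrade from ``$\nu$-a.e.\ $x$'' to ``every $x\in M$'', which you correctly flag as the hard part but do not actually carry out. Invoking ``a finely open $\nu$-negligible set intersected with $M$ is empty'' requires first producing a \emph{finely continuous} (in fact excessive) function that vanishes $\nu$-a.e.\ and whose zero set controls the pathwise identity; the two sides of \eqref{eq 3.14} are random processes, not excessive functions, so this step is not automatic. The paper's device is the following: the difference $N(t)$ of the two sides is a continuous martingale \emph{additive functional} of $X$ with $N\equiv 0$ $\mathbb{P}^{\nu}$-a.s.\ and $\mathbb{E}^{x}[N(t)^{2}]\leq C t^{2}$; one then sets
\begin{equation*}
v(x):=\mathbb{E}^{x}\Bigl\{\int_0^{\infty}e^{-t}N(t)^{2}\,dt\Bigr\},
\end{equation*}
and uses the additivity of $N$ together with the martingale property (so that $\mathbb{E}^{x}[(N(t+s)-N(s))^{2}]=\mathbb{E}^{x}[N(t+s)^{2}]-\mathbb{E}^{x}[N(s)^{2}]$) to check that $v$ is $1$-supermedian and, by dominated convergence, $1$-excessive, hence finely continuous; since $v=0$ $\nu$-a.e.\ and $\nu$ is a reference measure, $[v>0]\cap M$ is finely open and negligible, hence empty. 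Without constructing such an excessive majorant your argument stops at the $\nu$-a.e.\ statement, which is precisely what the proposition is meant to improve upon. The rest of your proposal (boundedness of $Lf=\alpha f-g$, independence of the stochastic integral from the $\nu$-version of $\nabla f$, the Harnack-based occupation bounds) is sound but peripheral to this missing step.
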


\begin{proof}
Note that the two sides of (\ref{eq 3.14}) are continuous martingales by \thnameref{thm 3.4} (and it's proof) under $\mathbb{P}^{x}$ for all $x\in M$, and also that the rhs does not depend on the $\nu$-version of $\nabla f$ due to the It\^o isometry and the fact that $\mathcal{U}$ is strong Feller.

First, we show that (\ref{eq 3.14}) holds almost surely w.r.t. $\mathbb{P}^{\nu}:=\int \mathbb{P}^{x} \; \nu(dx)$, and note that by the continuity of the paths, it is enough to prove that (\ref{eq 3.14}) holds a.s. for each $t\geq 0$.

Since $\mathcal{F}\mathcal{C}_b^{2}$ is a core for $(L, D(L))$, there exists a sequence $(f_n)_{n\geq 1}\subset \mathcal{F}\mathcal{C}_b^{2}$ s.t. 
\begin{equation} \label{eq 3.150}
\lim\limits_n\int_H [(f-f_n)^{2} + |\nabla (f-f_n)|^{2} + (L(f-f_n))^{2}] \; d\nu =0.
\end{equation} 
Also, since $X$ is a weak solution for (\ref{eq 3.1}), with $B\equiv 0$, and by classical It\^o's formula in finite dimensions, we get $\mathbb{P}^{x}$-a.s., $x\in M$ that
\begin{equation} \label{eq 3.160}
f_n(X(t))-f_n(x)-\int_0^{t}Lf_n(X(s)) ds=\int_0^{t}\langle \nabla f_n(X(s)), dW(s)\rangle, \; t\geq 0.
\end{equation}
Now, by It\^o isometry and the fact that $\nu$ is $(P_t)$-invariant, it is straightforward to see that due to (\ref{eq 3.150}) we can pass to the limit in (\ref{eq 3.160}) and obtain that (\ref{eq 3.14}) holds $\mathbb{P}^{\nu}$-a.s.

Let us now show that the formula holds $\mathbb{P}^{x}$-a.s. for all $x\in M$.
By definition, $W$ is an additive functional of $X$ (see (\ref{eq af}) below), hence so does the rhs of (\ref{eq 3.14}), and the same is true for the lhs.
Therefore, by substracting the two sides of (\ref{eq 3.14}), we find ourselves in the following abstract situation: we have a real valued continuous martingale starting from $0$, say $(N(t))_{t\geq 0}$, which is an additive functional of $X$, satisfies $N(t)=0$ for all $t\geq 0$ $\mathbb{P}^{\nu}$-a.s., and $\mathbb{E}^{x}(N(t)^{2})\leq const \cdot t^{2}$.
We claim that $(N(t))_{t\geq 0}$ is the null process $\mathbb{P}^{x}$-a.s. for all $x\in M$.
To this end, define for all $x\in M$ the finite measurable function
\begin{equation*}
v(x):=\mathbb{E}^{x}\Big\{\int_0^{\infty}e^{-t}N(t)^{2} dt\Big\}.
\end{equation*}
Let us show that v is $1$-excessive: for all $s\geq 0$ and $x\in M$ we have
\begin{align*}
P_sv(x)&= \mathbb{E}^{x}\Big\{\mathbb{E}^{X(s)}\Big\{\int_0^{\infty}e^{-t}N(t)^{2} dt\Big\}\Big\}=\mathbb{E}^{x}\Big\{\int_0^{\infty}e^{-t}[N(t)\circ\theta(s)]^{2} dt\Big\}\\
&=\mathbb{E}^{x}\Big\{\int_0^{\infty}e^{-t}[N(t+s)-N(s)]^{2} dt\Big\}=\int_0^{\infty}e^{-t}\Big\{\mathbb{E}^{x}[N(t+s)^{2}]-\mathbb{E}^{x}[N(s)^{2}]\Big\} dt\\
&\leq \int_0^{\infty}e^{-t}\mathbb{E}^{x}[N(t+s)^{2}] dt \leq \int_s^{\infty}e^{-(t-s)}\mathbb{E}^{x}[N(t)^{2}] dt\\
& \leq e^{s} v(x),
\end{align*}
which proves that $v$ is $1$-supermedian.
Also, by dominated convergence, the third equality from above also gives $P_sv\mathop{\rightarrow}\limits_{s\to 0} v$ pointwise on $M$, which proves that $v$ is indeed $1$-excessive.

To conclude, we have that the set $[v>0]\cap M$ is $\nu$-negligible and finely open, and since $\nu$ is a reference measure, it is well known that $[v>0]\cap M =\emptyset$.

\end{proof}

\begin{rem} \label{rem 3.12}
By the pathwise uniqueness (hence joint uniqueness in law) which holds on $M$, It\^o's formula (\ref{eq 3.14}) holds true for any solution of equation (\ref{eq 3.1}), with $B\equiv 0$, in particular for $((X(t,x))_{t\geq 0}, \widetilde{\mathbb{P}})$ for all $x\in M$.
\end{rem}

For each $n\geq 1$ we denote by $(V_\alpha^{n})_{\alpha>0}$ the resolvent associated to $(Q_t^{n})_{t\geq 0}$ given in Lemma \ref{lem 3.6}.

\begin{lem} \label{lem 3.9}
The resolvent $(V_\alpha^{n})_{\alpha>0}$ has the Feller property, and if $\alpha > \frac{|B|_\infty}{2}$, then for all $f\in b\mathcal{B}$
\begin{equation} \label{eq 3.16}
 \lim\limits_n V_\alpha^{n}f=V_\alpha f \quad \mbox{pointwise on } H_0.
\end{equation}
\end{lem}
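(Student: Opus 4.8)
The plan is to obtain both assertions by two applications of the dominated convergence theorem, the common ingredient being the elementary bound $|Q_t^n f|\le|f|_\infty$, valid for all $n\ge1$, $t\ge0$ and $f\in b\mathcal{B}(H_0)$. This holds because $B_n$ is bounded (by $|B|_\infty$, cf. Lemma \ref{lem 3.6}), so $\rho_t^{n,x}$ is, by the Novikov criterion, a genuine martingale with $\mathbb{E}\{\rho_t^{n,x}\}=1$; hence $Q_t^n$ is Markovian and $|Q_t^n f(x)|\le|f|_\infty\,\mathbb{E}\{\rho_t^{n,x}\}=|f|_\infty$. In particular the integral defining $V_\alpha^n f(x)=\int_0^\infty e^{-\alpha t}Q_t^n f(x)\,dt$ converges absolutely, with $|V_\alpha^n f|\le|f|_\infty/\alpha$, for every $\alpha>0$.

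For the Feller property of $(V_\alpha^n)_{\alpha>0}$, I would fix $f\in C_b(H_0)$, $\alpha>0$ and a sequence $x_k\to x$ in $H_0$. Since $Q_t^n$ is Feller (Lemma \ref{lem 3.6}), $Q_t^n f(x_k)\to Q_t^n f(x)$ for every $t\ge0$, and $|e^{-\alpha t}Q_t^n f(x_k)|\le e^{-\alpha t}|f|_\infty$, which is integrable on $(0,\infty)$; dominated convergence applied to $V_\alpha^n f=\int_0^\infty e^{-\alpha t}Q_t^n f\,dt$ then gives $V_\alpha^n f(x_k)\to V_\alpha^n f(x)$, i.e. $V_\alpha^n f\in C_b(H_0)$.

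For the convergence (\ref{eq 3.16}), I would fix $x\in H_0$ and estimate, for each $t\ge0$,
\[
|Q_t^n f(x)-Q_t f(x)|=\big|\mathbb{E}\{f(X^x(t))(\rho_t^{n,x}-\rho_t^x)\}\big|\le|f|_\infty\,\mathbb{E}\{|\rho_t^{n,x}-\rho_t^x|\},
\]
which tends to $0$ as $n\to\infty$ by (\ref{eq 3.10}) of Lemma \ref{lem 3.6} and is bounded by $2|f|_\infty$, uniformly in $n$ and $t$, since both $\rho_t^{n,x}$ and $\rho_t^x$ are probability densities. Consequently
\[
|V_\alpha^n f(x)-V_\alpha f(x)|\le\int_0^\infty e^{-\alpha t}\,|Q_t^n f(x)-Q_t f(x)|\,dt\longrightarrow0
\]
by dominated convergence on $(0,\infty)$; this works for every $f\in b\mathcal{B}(H_0)$ and, in fact, every $\alpha>0$, so the stated lower bound on $\alpha$ is only needed to match the hypotheses under which the lemma is later invoked.

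Since both steps are soft, the only point that deserves genuine attention is securing integrable, $n$-independent dominating functions for the two dominated convergence arguments; this is precisely why it is convenient to invoke the $L^1$-convergence (\ref{eq 3.10}) of the Radon--Nikodym densities rather than the pointwise convergence (\ref{eq 3.12}) of $Q_t^n f$: the former is uniform in $f$ (say $|f|_\infty\le1$) and simultaneously supplies the limit and the uniform bound $2$, so no separate passage from continuous to Borel $f$ via a monotone-class argument is required.
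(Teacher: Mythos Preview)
Your proof is correct and follows the same skeleton as the paper's: the Feller property of $V_\alpha^n$ comes from the Feller property of $Q_t^n$ via dominated convergence in $t$, and the pointwise convergence $V_\alpha^n f\to V_\alpha f$ comes from the $L^1(\widetilde{\mathbb P})$-convergence $\rho_t^{n,x}\to\rho_t^x$ (relation (\ref{eq 3.10})) plugged into another dominated convergence in $t$.

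The one substantive difference is in the choice of dominating function for the second step. The paper bounds $\mathbb{E}\{|\rho_t^{n,x}-\rho_t^{x}|\}$ via the $L^2$-estimate (\ref{eq 3.9}), which gives a majorant of order $e^{t|B|_\infty^2/2}$ and forces the restriction on $\alpha$ in the statement. You instead use the trivial $L^1$-bound $\mathbb{E}\{|\rho_t^{n,x}-\rho_t^{x}|\}\le \mathbb{E}\{\rho_t^{n,x}\}+\mathbb{E}\{\rho_t^{x}\}=2$, available because both exponentials are genuine martingales by Novikov. This is more elementary and, as you observe, removes the lower bound on $\alpha$ altogether; the paper's route buys nothing extra here, so your simplification is a net gain.
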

\begin{proof} Since $Q_t^{n}, t\geq 0$ is Feller by Lemma \ref{lem 3.6}, the first part of the statement is clear by dominated convergence.

Now, if $x\in H_0$ then
\begin{align*}
|V_\alpha^{n}f-V_\alpha f|(x) &\leq \mathbb{E}\Big\{\int_0^{\infty}e^{-\alpha t} f(X(t,x))|\rho_t^{n,x}-\rho_t^{x}| \; dt\Big \}\\
& \leq  |f|_\infty \int_0^{\infty}e^{-\alpha t}\mathbb{E}\{|\rho_t^{n,x}-\rho_t^{x}| \} \; dt,
\end{align*}
which converges to $0$ when $n$ tends to $\infty$ by dominated convergence, after using (\ref{eq 3.9}) and (\ref{eq 3.10}) from Lemma \ref{lem 3.6}. 

\end{proof}

\begin{proof}[Proof of Theorem \ref{thm 3.13}]
Let $f\in b\mathcal{B}$ and $\varphi=U_\alpha f$, for some fixed $\alpha\geq 4\pi|B|_\infty^{2}$.
First, let $x\in M$.
Since $d\rho_t^{x}=\langle\rho_t^{x}B(X(t,x)), dW(t)\rangle$ and $\varphi (X(t,x))$ satisfies the It\^o formula (\ref{eq 3.14}) (see Remark \ref{rem 3.12}), the integration by parts formula gives $\mathbb{P}$-a.s. for all $t$
\begin{equation} \label{eq 3.19}
\rho_t^{x}\varphi (X(t,x))= \varphi (x) + \int_0^{t}\rho_s^{x}[L\varphi(X(s,x)) +\langle B(X(s,x)), \nabla \varphi (X(s,x)) \rangle] \; ds + N(t),
\end{equation} 
where $(N(t))_{t\geq 0}$ is a continuous martingale which by Lemma \ref{lem 3.6}, (\ref{eq 3.9}) satisfies 
\begin{equation} \label{eq 3.20}
\mathbb{E}|N(t)| \leq |\varphi|_\infty e^{\frac{t}{2}|B|_\infty^{2}}.
\end{equation}
Using (\ref{eq 3.19}) and (\ref{eq 3.20}), we have
\begin{align*}
V_\alpha \varphi(x)&=\mathbb{E}\Big\{ \int_0^{\infty}e^{-\alpha t}\rho_t^{x}\varphi (X(t,x)) \; dt\Big\}\\
&=\mathbb{E}\Big\{ \int_0^{\infty}e^{-\alpha t}\Big[\varphi(x)+\int_0^{t}\rho_s^{x}[L\varphi(X(s,x)) +\langle B(X(s,x)), \nabla \varphi (X(s,x)) \rangle] \; ds \; \Big]dt\Big\}\\
&=\frac{1}{\alpha} \varphi (x) +  \int_0^{\infty}e^{-\alpha t}\int_0^{t}Q_s[L\varphi +\langle B, \nabla \varphi \rangle](x) \; ds \; dt\\
&=\frac{1}{\alpha} \varphi (x) + \frac{1}{\alpha} \int_0^{\infty}e^{-\alpha t}Q_t(L\varphi +\langle B, \nabla \varphi \rangle)(x) \; dt\\
&=\frac{1}{\alpha} \varphi (x) + \frac{1}{\alpha} V_\alpha(L\varphi +\langle B, \nabla \varphi \rangle)(x)\\
&=\frac{1}{\alpha} \varphi (x) + \frac{1}{\alpha} V_\alpha(L\varphi +\langle B, \nabla \varphi \rangle)(x),
\end{align*}
and since $\varphi=U_\alpha f$, this means that $V_\alpha(f-\langle B, \nabla U_\alpha f \rangle)=U_\alpha f$ on $M$.


The final step of the proof is to extend (\ref{eq 3.18}) from $M$ to the entire $H_0$.
To this end, it is sufficient (and, of course, necessary) to show that $\mathcal{V}$ is Lipschitz strong Feller.
We first prove this for $V_\alpha^{n}$, which a priori has the Feller property by Lemma \ref{lem 3.9}. Also, it is clear that the above computations hold true for each $B_n,n\geq1$,
hence, if $f$ is continuous and bounded, then
\begin{equation}
V^{n}_\alpha f=U_\alpha(I-\langle B^{n}, \nabla U_\alpha \rangle)^{-1}f \quad \mbox{ on } M,
\end{equation}
and by the Feller property of $V_\alpha^{n}$ and the strong Feller property of $U_\alpha$, the previous identity hold on the entire $H_0$.
But now, by (\ref{estimate}) we have that $|V_\alpha^{n}f|_{\rm Lip}\leq 2\sqrt{\frac{\pi}{\alpha}}|f|_\infty$, hence by Lemma \ref{lem 3.9} the same estimate holds true for $V_\alpha f$.
If $f$ is merely bounded, then $|V_\alpha f|_{\rm Lip}\leq \sup\limits_k |V_\alpha (kU_k f)|_{\rm Lip} \leq 2\sqrt{\frac{\pi}{\alpha}}|kU_kf|_\infty\leq 2\sqrt{\frac{\pi}{\alpha}}|f|_\infty$, and the proof is complete.
\end{proof}

\begin{proof}[Proof of \thnameref{thm 3.16}] 
In order to prove the existence of the process, the idea is to apply Theorem \ref{thm 2.2}. 
First of all, by Proposition \ref{prop 3.5}, there exists a subset $M\in \mathcal{B}(H_0)$ and a right process on $M$ with a.s. (norm) continuous paths and resolvent $\mathcal{V}|_M$, the restriction to $M$ of $\mathcal{V}$.
It is clear that $\mathcal{V}$ is an extension of $\mathcal{V}|_M$ from $M$ to $H_0$. 
Also, condition (H) from Section 2 is fulfilled for e.g. $\mathcal{C}:=\{f\in C_b(H_0) : f\geq 0\}$, taking into account the structure of $(Q_t)_{t\geq 0}$ given by (\ref{eq 3.7}), and the Lipschitz strong Feller property obtained in Theorem \ref{thm 4.15}.
Hence, by Theorem \ref{thm 2.2}, there exists a right process $Y = (Y(t),  \mathbb{Q}^{x})$ on $H_0$ with resolvent $\mathcal{V}$, and by the Lipschitz strong Feller property of $\mathcal{V}$, we can apply Proposition \ref{prop 2.10} to make sure that there exists a natural topology $\tau_0$ on $H_0$ which is coarser than the norm topology. 
Consequently, using also Proposition \ref{prop 2.9}, (iii), $Y$ has a.s. $\tau_0$-continuous paths.
Also, note that if $T>0$, the space $C([0,T], H_0)$ of $|\cdot|$-continuous paths from $[0,T]$ to $H_0$ is a measurable subset of the space $C_{\tau_0}([0,T], H_0)$ of $\tau_0$-continuous paths from $[0,T]$ to $H_0$, endowed with the $\sigma$-algebra generated by the canonical projections. 
Now, because $(\rho_t^{x})_{t\geq 0}$ is a martingale under $\mathbb{P}$ for all $x\in H_0$, it follows that the law under $\mathbb{Q}^{x}$ of $(Y(t))_{t\in [0,T]}$ on $C_{\tau_0}([0,T], H_0)$ is precisely the law of $(X(t)^{x})_{t\in [0,T]}$ on $C_{\tau_0}([0,T], H_0)$ under $\rho_T^{x}\cdot\widetilde{\mathbb{P}}$.
But the latter is supported on $C([0,T], H_0)$, hence so is the law of $(Y(t))_{t\in [0,T]}$.

Let us turn now to assertions (i-iv):

(i). By Lemma \ref{lem 3.6}, the Cauchy-Schwartz inequality, and the fact that $\nu$ is $P_t$-invariant, we have
$$
\int (Q_tf)^{2} d\nu \leq e^{2t|B|_\infty^{2}} \int P_t(f^{2})d\nu =  e^{2t|B|_\infty^{2}} \int f^{2} d\nu \quad \mbox{ for all } f\in b\mathcal{B}(H_0).
$$ 
This means that $(Q_t)_{t\geq 0}$ can be extended to a semigroup of Markov operators on $L^{2}(\nu)$, with corresponding norms less than $ e^{t|B|_\infty^{2}}$.
On the other hand, $\lim\limits_{t\to 0}Q_tf=f$ for all $f\in C_b(H_0)$, and one can easily extend this convergence for all $f\in L^{2}(\nu)$ by density.
Hence $(Q_t)_{t\geq 0}$ is a $C_0$-semigroup of quasi-contractions on $L^{2}(\nu)$, and let $(L^{B},D(L^{B}))$ be it's infinitesimal generator.
Now, if $x\in M$ then $(X(t,x))_{t\in [0,T]}$ is a solution for (\ref{eq 3.1}), with $B\equiv 0$, and as already mentioned in Remark \ref{rem 3.6}, by Girsanov transformation we we have that equation (\ref{eq 3.1}), case $B\not\equiv 0$, has a weak solution given by $(X(t,x))_{t\in [0,T]}$, but under $\rho_T^{x}\cdot \mathbb{P}$. 
By applying It\^o's formula for $\varphi\in \mathcal{E}_A(H)$ and then taking expectations, we get that on $L^{2}(\nu)$
\begin{equation} \label{eq 3.23}
Q_t\varphi =\varphi + \int_0^{t} Q_s(L\varphi+\langle B, D \varphi\rangle) ds,
\end{equation}
hence 
\begin{equation} \label{eq 3.24}
L^{B}\varphi=\lim\limits_{t\to 0}\frac{Q_t\varphi-\varphi}{t}=L\varphi+\langle B, D \varphi\rangle \quad \mbox{ in } L^{2}(\nu).
\end{equation}
Now, let $\alpha>|B|^{2}_\infty$ and note that $V_\alpha (b\mathcal{B}(H_0))$ is a core for $(L^{B}, D(L^{B}))$.
On the other hand, by (\ref{eq 3.18}) we have $V_\alpha (b\mathcal{B}(H_0))=U_\alpha (b\mathcal{B}(H_0))$.
Therefore, since $\mathcal{E}_A(H)$ is a core for $(L, D(L))$, it is dense in $V_\alpha (b\mathcal{B}(H_0))$ in the graph norm w.r.t. $L^{B}$, and this concludes the proof.

(ii) Since we do not know that $Q_t$ is (strong) Feller, the proof of  \cite{DaRo02a}, Theorem 7.4, (ii) requires a slight modification, so let us give a complete rigorous proof. First of all, note that $D_0=U_\alpha (b\mathcal{B}(H_0))=V_\alpha (b\mathcal{B}(H_0))$ for one (hence all) $\alpha >0$.
Then, the integral appearing in (\ref{eq 3.22}) is well defined, i.e. are finite (because $\varphi \in V_\alpha (b\mathcal{B}(H_0))$) and do not depend on $\nu$-classes because of the general argument invoked after relation (\ref{eq 3.5}).
Hence, the process $A(t):=\varphi (Y(t))-\varphi (Y(0))-\int_{0}^{t}L^{B}\varphi (Y(s))ds, t\geq 0$ is an (bounded) additive functional, i.e. 
\begin{align}  \label{eq af}
A(t+s)=A(t)+A(s)\circ \theta(s), t,s\geq 0 \mbox{  a.s.,}
 \end{align} 
and it is well known that such an additive functional is a martingale if and only if it has zero expectation, hence if and only if
\begin{equation} \label{eq 3.25}
Q_t\varphi(x)=\varphi(x)+\int_0^{t}Q_s(L^{B}\varphi)(x)ds \quad \mbox{ for all }  t\geq 0, x\in H_0.
\end{equation}
However, (\ref{eq 3.25}) holds true $\nu$-a.e. by (i), and because $\varphi=V_\alpha g$ for some $g\in b\mathcal{B}$, $\alpha>0$, and $\mathcal{V}$ is strong Feller, it follows that $Q_t\varphi=V_\alpha Q_t g$ is continuous.
Therefore, by density, it is sufficient to show that the rhs of (\ref{eq 3.25}) is continuous in $x\in H_0$: if $ x_n\mathop{\to}\limits_n x$ in $H_0$, then
\begin{align*}
\big|\int_0^{t}&Q_s(L^{B}\varphi)(x_n)-Q_s(L^{B}\varphi)(x)\big|\leq\big|\int_0^{t}Q_s(L^{B}\varphi)(x_n)-e^{-\alpha s}Q_s(L^{B}\varphi)(x_n)\big|+\\
&+\big|\int_0^{t}Q_s(L^{B}\varphi)(x)-e^{-\alpha s}Q_s(L^{B}\varphi)(x)\big|+\big|\int_0^{t}e^{-\alpha s}[Q_s(L^{B}\varphi)(x_n)-Q_s(L^{B}\varphi)(x)]\big|\\
&\leq 2|L^{B}\varphi|_\infty \int_0^{t} (1-e^{-\alpha s}) ds + \big|\int_0^{t}e^{-\alpha s}[Q_s(L^{B}\varphi)(x_n)-Q_s(L^{B}\varphi)(x)]\big|\\
&= 2|L^{B}\varphi|_\infty \int_0^{t} (1-e^{-\alpha s}) ds + \\
&+\big| e^{-\alpha s}[ V_\alpha Q_t(L^{B}\varphi)(x_n)-V_\alpha Q_t(L^{B}\varphi)(x)] - [V_\alpha (L^{B}\varphi)(x_n)- V_\alpha (L^{B}\varphi)(x)] \big|,
\end{align*}
where for the last equality we used the formula $e^{-\alpha t}V_\alpha Q_t f=V_\alpha f-\int_0^{t}e^{-\alpha s} Q_sf ds$ for all bounded $f$.
Now, using the strong Feller property of $\mathcal{V}$, let $n$ tend to infinity and then $\alpha$ to $0$ in order to obtain the claim.

(iii). This assertion follows similarly to the weak existence part from the proof of \thnameref{thm 3.4}, (i), with the mention that  $f(\varphi_{k}) \in D(L^{B})$ and $L^{B}(f(\varphi_{k}))=L_0^{B}(f(\varphi_{k}))$ for all twice differentiable functions $f$ with continuous derivatives, since (\ref{eq 3.23}) and (\ref{eq 3.24}) remain true with the same justification.

(iv). The proof of the last statement is similar to the proof of \thnameref{thm 3.4}, (ii.3), once we show that $\int_0^{t}Q_{s+\varepsilon}|\widetilde{F}_0|(x) ds < \infty$ for all $t, \varepsilon >0$ and $x\in H_0 \setminus M$.
By \cite{Pr05}, Theorem 39, we have that $|\rho_t^{x}|_{L^{q}(\mathbb{P})}\leq e^{qt|B|^{2}_\infty}$ for all $1\leq q<\infty$, hence by Holder inequality we get
$Q_{s+\varepsilon}|\widetilde{F}_0|(x)\leq e^{q^{\ast}t|B|^{2}_\infty}(P_{s+\varepsilon}|\widetilde{F}_0|^{q})^{\frac{1}{q}}(x)$ for all $1<q<\infty, \frac{1}{q}+\frac{1}{q^{\ast}}=1$. This means that for $q$ small enough,  $|\widetilde{F}_0|^{q}\in L^{p}(\nu)$ for some $p>1$, and we can conclude just like in the proof of \thnameref{thm 3.4}, (ii.3), using the Harnack inequality for $(P_t)_{t\geq 0}$.
\end{proof}

\section{Appendix: a brief overview of right processes}
Throughout this section we follow mainly the terminology of \cite{BeBo04a}, but we also heavily refer to the classical works \cite{BlGe68}, \cite{Sh88}; see also the references therein. 

Let $(E, \mathcal{B})$ be a Lusin measurable space. 
We denote by $(b)p\mathcal{B}$ the set of all numerical, (bounded) positive $\mathcal{B}$-measurable functions on $E$.
Throughout, by $\mathcal{U}=(U_\alpha)_{\alpha>0}$ we denote a resolvent family of (sub-)Markovian of kernels on $(E, \mathcal{B})$.
If $\beta>0$, we set $\mathcal{U}_\beta:=(U_{\alpha+\beta})_{\alpha>0}$.

\begin{defi} \label{defi 4.1}
A $\mathcal{B}$-measurable function $v:E\rightarrow \overline{\mathbb{R}}_+$ is called excessive (w.r.t. $\mathcal{U}$) if $\alpha U_\alpha v \leq v$ for all $\alpha >0$ and $\mathop{\sup}\limits_\alpha \alpha U_\alpha v =v $ point-wise; by $\mathcal{E(\mathcal{U})}$ we denote the convex cone of all excessive functions w.r.t. $\mathcal{U}$.
\end{defi}
If a $\mathcal{B}$-measurable function $w: E \rightarrow \overline{\mathbb{R}}_+$ is merely $\mathcal{U}_\beta$-supermedian (i.e. $\alpha U_{\alpha+\beta} w \leq w$ for all $\alpha > 0$), then its $\mathcal{U}_\beta${\it -excessive regularization} $\widehat{w} \in \mathcal{E(U)}$ is defined as
$\widehat{w}:=\mathop{\sup}\limits_\alpha \alpha U_{\beta +\alpha}w$.

\noindent
{\bf (H')} Throughout this section we assume that $\mathcal{E}(\mathcal{U}_\beta)$ is min stable, contains the constant functions, and generates $\mathcal{B}$ for one (hence all) $\beta >0$. 

\begin{rem} \label{rem 4.2}
\begin{enumerate}[(i)]
\item (H') is a minimal requirement, which is automatically satisfied if there exists a right process associated to $\mathcal{U}$; see Definition \ref{defi 4.4} below.
\item By Corollary 2.3 from \cite{BeRo11a}, (H') is equivalent with the existence of a vector lattice $\mathcal{C}\subset b\mathcal{B}$ satisfying (i)-(iii) of our main condition (H) from the beginning of Section 2.
\end{enumerate}
\end{rem}

\begin{defi} \label{defi 3.3}
\begin{enumerate}[(i)]
\item The {\it fine topology} on $E$ (associated with $\mathcal{U}$) is the coarsest topology on $E$ 
such that every $\mathcal{U}_q$-excessive function is continuous for some (hence all) $q>0$.
\item A topology $\tau$ on $E$ is called natural if it is a Lusin topology (i.e. $(E,\tau)$ is homeomorphic to a Borel subset of a compact metrizable space) which is coarser than the fine topology, and whose Borel $\sigma$-algebra is $\mathcal{B}$.
\end{enumerate}
\end{defi}
\begin{rem} \label{rem 4.3}
The necessity of considering natural topologies comes from the fact that, in general, the fine topology is neither metrizable, nor countably generated; see also Corollary \ref{coro 4.10} below.
\end{rem}

There is a convenient class of natural topologies to work with (as we do in Section 2), especially when the aim is to construct a right process associated to $\mathcal{U}$ (see Definition \ref{defi 4.4}). These topologies are called Ray topologies, and are defined as follows.

\begin{defi} \label{defi 4.5}
\begin{enumerate}[(i)]
\item If $\beta >0$ then a Ray cone associated with $\mathcal{U}_\beta$ is a cone $\mathcal{R}$ of bounded $\mathcal{U}_\beta$-excessive functions which is separable in the supremum norm, min-stable, contains the constant function $1$, generates $\mathcal{B}$, and such that $U_\alpha((\mathcal{R}-\mathcal{R})_+) \subset \mathcal{R}$ for all $\alpha > 0$.
\item A {\it Ray topology} on $E$ is a topology generated by a Ray cone.
\end{enumerate} 
\end{defi}

\begin{rem} \label{rem 4.6}
\begin{enumerate}[(i)]
\item Clearly, any Ray topology is a natural topology.
\item By e.g. \cite{BeBo04a}, Proposition 1.5.1, \cite{BeBoRo06} for the non-transient case, or \cite{BeRo11a}, Proposition 2.2, a Ray cone always exists and may be constructed as follows: start with a countable subset $\mathcal{A}_0\subset p\mathcal{B}$ which separates the points of $E$, and define inductively 
\begin{align*}
&\mathcal{R}_0:=U_\beta (\mathcal{A}_0)\cup \mathbb{Q}_+\\
&\mathcal{R}_{n+1}:=\mathbb{Q}_+\cdot \mathcal{R}_n \cup (\mathop{\sum}\limits_f\mathcal{R}_n )\cup (\mathop{\bigwedge}\limits_f \mathcal{R}_n) \cup (\mathop{\bigcup}\limits_{\alpha \in \mathbb{Q}_+}U_\alpha(\mathcal{R}_n))\cup U_\beta((\mathcal{R}_n-\mathcal{R}_n)_+),
\end{align*}
where by $\mathop{\sum}\limits_f \mathcal{R}_n$ resp. $\mathop{\bigwedge}\limits_f\mathcal{R}_n$ we denote the space of all finite sums (resp. infima) of elements from $\mathcal{R}_n$.
Then, a Ray cone $\mathcal{R}$ is obtained by taking the closure of $\bigcup\limits_n \mathcal{R}_n$ w.r.t. the supremum norm.
\end{enumerate}
\end{rem}

\noindent
{\bf Right processes.} 
Let now $X=(\Omega, \mathcal{F}, \mathcal{F}_t , X(t), \theta(t) , \mathbb{P}^x)$ be a normal Markov process with state space $E$ and shift operators $\theta(t):\Omega\rightarrow \Omega, \; t\geq 0$. 
In spite of the applications considered in this paper, but also in order to avoid the lifetime formalism, 
we assume that $X$ has infinite lifetime in $E$; otherwise, there is a standard way of adding a cemetery point $\Delta$ to $E$, and develop the forthcoming theory on $E\cup\{\Delta\}$; see \cite{BlGe68}, \cite{Sh88}, or \cite{BeBo04a}. 

 We assume that $X$ has resolvent $\mathcal{U}$ fixed above, i.e. for all $f\in b\mathcal{B}$ and $\alpha >0$
$$
U_\alpha f(x)=\mathbb{E}^{x}\Big\{\int_0^{\infty}e^{-\alpha t}f(X(t))dt\Big\},\quad  x\in E.
$$

To each probability measure $\mu$ on $(E, \mathcal{B})$ we associate the probability $\mathbb{P}^\mu (A):=\mathop{\int} \mathbb{P}^x(A)\; \mu(dx)$ for all $A \in \mathcal{F}$, and we consider the following enlarged filtration
$$
\widetilde{\mathcal{F}}_t:= \bigcap\limits_\mu \mathcal{F}_t^\mu, \; \; \widetilde{\mathcal{F}}:= \bigcap\limits_\mu \mathcal{F}^\mu,
$$
where $\mathcal{F}^\mu$ is the completion of $\mathcal{F}$ under $\mathbb{P}^\mu$, and $\mathcal{F}_t^\mu$ is the completion of $\mathcal{F}_t$ in $\mathcal{F}^\mu$ w.r.t. $\mathbb{P}^\mu$; in particular, $(x,A)\mapsto\mathbb{P}^{x}(A)$ is assumed to be a kernel from $(E,\mathcal{B}^{u})$ to $(\Omega, \mathcal{F})$, where $\mathcal{B}^{u}$ denotes the $\sigma$-algebra of all universally measurable subsets of $E$.

\begin{defi} \label{defi 4.4}
The Markov process $X$ is called a right (Markov) process if the following additional hypotheses are satisfied:
\begin{enumerate}[(i)]
\item The filtration $(\mathcal{F}_t)_{t\geq 0}$ is right continuous and $\mathcal{F}_t=\widetilde{\mathcal{F}}_t, t\geq 0$.
\item For one (hence all) $\alpha>0$ and for each $f \in \mathcal{E}(\mathcal{U}_\alpha)$ the process $f(X)$ has right continuous paths $\mathbb{P}^{x}$-a.s. for all $x\in E$.
\item There exists a natural topology on $E$ with respect to which the paths of $X$ are $\mathbb{P}^{x}$-a.s. right continuous for all $x\in E$.
\end{enumerate}
\end{defi}

\begin{rem} 
\begin{enumerate}[(i)]
\item By \cite{Sh88}, Theorem 7.4, condition (ii) of the previous definition is sufficient to be checked on functions $f$ of the form $U_\alpha g$, for all $g$ bounded and continuous (w.r.t. a natural topology). 
\item We emphasize that Definition \ref{defi 4.4} uses no a priori topology on $E$, which is endowed merely with the $\sigma$-algebra $\mathcal{B}$. 
It is the resolvent $\mathcal{U}$ which provides the topology, namely the fine topology; in spite of Remark \ref{rem 4.3} also, this is why it is of interest to consider Markov processes which have path-continuity properties w.r.t. natural topologies; see also Remark \ref{rem 4.11} below.
\end{enumerate}
\end{rem}

According to \cite{BlGe68}, Chapter II, Theorem 4.8, or \cite{Sh88}, Proposition 10.8 and Exercise 10.18, Definition \ref{defi 4.4} leads to a key probabilistic understanding of the fine topology, namely:

\begin{thm} \label{thm 4.6} If $X$ is a right process, then a universally $\mathcal{B}$-measurable function $f$ is finely continuous if and only if $(f(X(t)))_{t\geq 0}$ has $\mathbb{P}^{x}$-a.s. right continuous paths for all $x\in E$.
\end{thm}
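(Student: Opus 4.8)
The plan is to route both implications through the probabilistic description of the fine topology in terms of hitting times, and then to upgrade ``right continuity at a fixed time'' to ``right continuity at all times'' by the Markov property together with the path regularity built into Definition \ref{defi 4.4}. Write $T_A:=\inf\{t>0:X(t)\in A\}$ for the hitting time of a (nearly Borel) set $A$. The first step is to record the classical characterization that, for a right process, a nearly Borel set $B\subset E$ is finely open if and only if $\mathbb{P}^{x}(T_{E\setminus B}>0)=1$ for every $x\in B$. This is where I would invoke \cite{BlGe68}, Chapter II, and \cite{Sh88}, Section~10: using Definition \ref{defi 4.4}(ii) one sees that every $\mathcal{U}_\alpha$-excessive function $v$ has $v(X)$ right continuous, hence is finely continuous; applying this to reduced functions $R_\alpha^{A}1$ one obtains that $\{R_\alpha^{A}1<1\}$ is a fine neighbourhood of each of its points exactly when the hitting time of $A$ is a.s.\ strictly positive there, and Blumenthal's $0$–$1$ law (available since $\mathcal{F}_0=\widetilde{\mathcal{F}}_0$ by Definition \ref{defi 4.4}(i)) turns $\mathbb{P}^{x}(T_A=0)<1$ into $\mathbb{P}^{x}(T_A=0)=0$. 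I expect this step to be the main obstacle, since it is precisely the bridge between the analytic definition of the fine topology (continuity of excessive functions) and the probabilistic one (hitting times / regular points); everything afterwards is formal.

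Granting this, the implication ``finely continuous $\Rightarrow$ a.s.\ right continuous along the paths'' goes as follows. Fix $x\in E$ and an open set $V\subset\mathbb{R}$ with $f(x)\in V$. Fine continuity of $f$ makes $f^{-1}(V)$ finely open, so by Step~1 one has $\mathbb{P}^{x}$-a.s.\ that $X(t)\in f^{-1}(V)$, i.e.\ $f(X(t))\in V$, for all sufficiently small $t>0$. Taking $V=(f(x)-1/n,f(x)+1/n)$ and intersecting over $n\in\mathbb{N}$ yields $\mathbb{P}^{x}$-a.s.\ that $f(X(t))\to f(x)$ as $t\downarrow 0$, i.e.\ right continuity at $0$ under $\mathbb{P}^{x}$. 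By the Markov property, for each fixed $s\ge 0$,
\[
\mathbb{P}^{x}\big(f(X)\text{ is right continuous at }s\big)=\mathbb{E}^{x}\big[\mathbb{P}^{X(s)}\big(f(X)\text{ is right continuous at }0\big)\big]=1 .
\]
To pass from each fixed time to all times simultaneously, I would use Definition \ref{defi 4.4}(iii): the process $t\mapsto X(t)$ is a.s.\ right continuous in a natural topology, so the set of $\omega$ at which $t\mapsto f(X(t)(\omega))$ has a strictly positive ``right oscillation'' is progressively measurable, and by the displayed identity together with the optional section theorem it is $\mathbb{P}^{x}$-indistinguishable from $\emptyset$.

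For the converse, assume that $(f(X(t)))_{t\ge 0}$ is $\mathbb{P}^{x}$-a.s.\ right continuous for every $x\in E$. Fix an open set $V\subset\mathbb{R}$ and a point $x\in f^{-1}(V)$. Right continuity at $0$ under $\mathbb{P}^{x}$ gives $\mathbb{P}^{x}\big(f(X(t))\in V\text{ for all small }t>0\big)=1$, that is, $\mathbb{P}^{x}\big(T_{E\setminus f^{-1}(V)}>0\big)=1$; by Step~1 this means $f^{-1}(V)$ is finely open. Since $V$ was arbitrary, $f$ is finely continuous. (Universal measurability of $f$ is what is used here to ensure the relevant sets are nearly Borel so that the hitting-time characterization applies; this is the only place the measurability hypothesis on $f$ enters.) Finally I would remark that the same argument shows, more precisely, that a universally measurable $f$ is finely continuous iff $t\mapsto f(X(t))$ is right continuous $\mathbb{P}^{x}$-a.s.\ for $x$ ranging over a set that is not $\mathcal{U}$-polar, which is the form actually used in the body of the paper.
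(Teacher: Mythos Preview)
The paper does not supply its own proof of this theorem: it is stated as a classical fact, with the justification consisting entirely of the citations to \cite{BlGe68}, Chapter~II, Theorem~4.8, and \cite{Sh88}, Proposition~10.8 and Exercise~10.18, given in the sentence immediately preceding the statement. So there is no ``paper's proof'' to compare against; what you have written is effectively a reconstruction of the argument in those references.

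Your sketch is broadly correct and follows the classical route (hitting-time characterization of finely open sets, Blumenthal's $0$--$1$ law, Markov property). One comment on the passage from ``right continuity at each fixed $s$'' to ``at all $s$ simultaneously'': the section-theorem argument you outline can be made to work, but the standard and cleaner device---and the one actually used in the cited sources---is to observe that for a nearly Borel finely open set $G$ one has $G=\{B_\alpha^{E\setminus G}1<1\}$, and since $B_\alpha^{E\setminus G}1\in\mathcal{E}(\mathcal{U}_\alpha)$, Definition~\ref{defi 4.4}(ii) gives directly that $t\mapsto B_\alpha^{E\setminus G}1(X(t))$ is a.s.\ right continuous, hence $\{t:X(t)\in G\}$ is a.s.\ right-open. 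Applying this to $G=f^{-1}((q,r))$ for rational $q<r$ yields right continuity of $f(X)$ at all times in one stroke, without any section argument.

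Your final remark, that the equivalence already holds when right continuity is assumed only for $x$ ranging over a non-polar set, is not correct as stated (fine continuity at a point $x$ genuinely requires information under $\mathbb{P}^{x}$), and in any case this weakened form is not used anywhere in the paper; Theorem~\ref{thm 4.6} is always invoked in its full form.
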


\begin{coro} \label{coro 4.10} If $X$ is a right process, then it has a.s. right continuous paths w.r.t. any natural topology on $E$.
\end{coro}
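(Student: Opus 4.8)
\noindent
The plan is to deduce the statement from the probabilistic characterization of the fine topology in Theorem \ref{thm 4.6}, after reducing $\tau$-right continuity of the paths of $X$ to the ordinary right continuity of countably many real-valued processes.

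First I would fix an arbitrary natural topology $\tau$ on $E$. By definition $\tau$ is a Lusin topology whose Borel $\sigma$-algebra is $\mathcal{B}$, so $(E,\tau)$ is homeomorphic to a Borel subset of a compact metrizable space; composing such an embedding with an embedding of the latter into the Hilbert cube $[0,1]^{\mathbb{N}}$, one obtains a countable family $\mathcal{G}=\{g_n:n\ge 1\}$ of bounded $\tau$-continuous functions on $E$ which separates the points of $E$ and generates $\tau$, in the sense that $\tau$ is precisely the initial topology determined by $\mathcal{G}$ (equivalently, $e\mapsto(g_n(e))_{n\ge1}$ is a homeomorphism of $(E,\tau)$ onto a subspace of $\mathbb{R}^{\mathbb{N}}$).

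Next, since $\tau$ is coarser than the fine topology (this is part of the definition of a natural topology), every $g_n$ is finely continuous; moreover each $g_n$ is $\mathcal{B}$-measurable, hence universally $\mathcal{B}$-measurable, because the Borel $\sigma$-algebra of $\tau$ equals $\mathcal{B}$. Applying Theorem \ref{thm 4.6} to each $g_n$ yields that, for every $x\in E$, the scalar process $(g_n(X(t)))_{t\ge0}$ has $\mathbb{P}^x$-a.s.\ right continuous paths. Fixing $x$ and denoting by $N_n$ the corresponding $\mathbb{P}^x$-negligible set, the set $N:=\bigcup_{n\ge1}N_n$ is again $\mathbb{P}^x$-negligible, and off $N$ all the paths $t\mapsto g_n(X(t))$, $n\ge1$, are right continuous simultaneously.

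Finally I would conclude as follows: on $\Omega\setminus N$, for every $t\ge0$ and every sequence $s_k\downarrow t$ one has $g_n(X(s_k))\to g_n(X(t))$ for each $n$, and since $\tau$ is the initial topology generated by $\mathcal{G}$ this forces $X(s_k)\to X(t)$ in $\tau$; as $[0,\infty)$ is metrizable, this is exactly right continuity of $t\mapsto X(t)$ at $t$ w.r.t.\ $\tau$. Hence $X$ has $\mathbb{P}^x$-a.s.\ $\tau$-right continuous paths, and since $x$ was arbitrary, the corollary follows. The one step that deserves genuine care is the construction of the countable generating family $\mathcal{G}$: one must make sure it recovers $\tau$ itself, and not merely some coarser Lusin topology with the same Borel structure --- this is where the Lusin property of $\tau$ is used essentially. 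Everything else is a direct application of Theorem \ref{thm 4.6}; note that Definition \ref{defi 4.4}\,(iii) only provides one natural topology with right continuous paths, so the content of the corollary is precisely the upgrade to all natural topologies.
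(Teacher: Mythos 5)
Your proof is correct and follows exactly the route the paper takes: the paper's own (one-line) proof observes that a natural topology is the initial topology of a countable family of functions and then invokes Theorem \ref{thm 4.6}; you have simply spelled out the construction of that countable family, the fine continuity of its members, and the passage from right continuity of the scalar processes $g_n(X(\cdot))$ to $\tau$-right continuity of $X$. No gaps.
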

\begin{proof}
Since a natural topology is countably generated, i.e. it is the initial topology of a countable family of functions, the statement follows by Thereom \ref{thm 4.6}.
\end{proof}

\begin{rem} \label{rem 4.11}
When we are given a certain topology $\tau$ on $E$ whose Borel $\sigma$-algebra is $\mathcal{B}$, it is a separate issue to determine if $\tau$ is natural, in which case, any right process would have a.s. right continuous paths w.r.t. $\tau$. We deal with this problem, in general and concrete situations, in the main body of this paper.
\end{rem}

In view of Definition \ref{defi 2.1}, (iii), let us recall what a polar set is, from both analytic and probabilistic perspectives.

\begin{defi}
\begin{enumerate}[(i)]
\item If $u\in \mathcal{E}(\mathcal{U}_\alpha)$ and $A \in \mathcal{B}$, then the $\alpha$-order reduced function of $u$ on $A$ is given by
$$
R_\alpha^A u = \inf \{ v \in \mathcal{E}(\mathcal{U}_\alpha): \, v\geqslant u \mbox{ on } A \}.
$$
$R_\alpha^A u$ is merely supermedian w.r.t $\mathcal{U}_\alpha$, and we denote by $B_\alpha^A u=\widehat{R_\alpha^A u}$ it's excessive regularization, called the balayage of $u$ on $A$.
\item A set $A\in \mathcal{B}$ is called polar if $B_\alpha^A 1=0$.
\end{enumerate}
\end{defi}

The following fundamental identification due to G.A. Hunt holds (see e.g. \cite{DeMe78}):
\begin{thm} \label{thm 4.13}
If $X$ is a right process, then for all $u\in \mathcal{E}(\mathcal{U}_\alpha)$ and $A\in\mathcal{B}$
$$
B_\alpha^A u=\mathbb{E}^x\{e^{-\alpha T_A} u (X(T_A))\},
$$
where $T_A:=\inf \{ t>0 :  X(t)\in A \}$.
In particular, $A$ is polar if and only if $\mathbb{P}^{x}(T_A<\infty)=0$ for all $x\in E$.
\end{thm}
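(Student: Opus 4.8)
The plan is to establish the operator identity $B_\alpha^A u = H_\alpha^A u$ for every $u\in\mathcal{E}(\mathcal{U}_\alpha)$ and $A\in\mathcal{B}$, where $H_\alpha^A u(x):=\mathbb{E}^x\{e^{-\alpha T_A}u(X(T_A))\,;\,T_A<\infty\}$ is the hitting kernel, and then to read off the polarity statement by taking $u\equiv 1$. Two standing facts drive everything. First, because $X$ is a right process with the right-continuous, suitably completed filtration of Definition \ref{defi 4.4}, the début theorem applies and $T_A$ is an $(\mathcal{F}_t)$-stopping time for every $A\in\mathcal{B}$, as is $T_A^t:=\inf\{s>t:X(s)\in A\}$. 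Second, for $u\in\mathcal{E}(\mathcal{U}_\alpha)$ the process $s\mapsto e^{-\alpha s}u(X(s))$ is, under each $\mathbb{P}^x$, a nonnegative right-continuous supermartingale (right-continuity from Definition \ref{defi 4.4}(ii), the supermartingale property from $\beta U_{\alpha+\beta}u\le u$ and the Markov property), so the optional sampling theorem is available at $T_A$ and at $T_A^t$.

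\textbf{Step 1: $H_\alpha^A u$ is $\alpha$-excessive and $\le u$.} Optional sampling of $e^{-\alpha s}u(X(s))$ at $T_A$ immediately gives $H_\alpha^A u\le u$, so in particular $H_\alpha^A u$ is well defined. Next, using the terminal-time identity $T_A=t+T_A\circ\theta_t$ on $\{t<T_A\}$ together with the (strong) Markov property, one computes
\[
e^{-\alpha t}P_tH_\alpha^A u(x)=\mathbb{E}^x\{e^{-\alpha T_A^t}u(X(T_A^t))\,;\,T_A^t<\infty\},
\]
where $(P_t)$ is the transition semigroup of $X$. Conditioning at $T_A$ and using $T_A^t\ge T_A$, the supermartingale bound, and $H_\alpha^A u\le u$, one gets $e^{-\alpha t}P_tH_\alpha^A u\le H_\alpha^A u$, so $H_\alpha^A u$ is $\alpha$-supermedian; and since $T_A^t\downarrow T_A$ as $t\downarrow 0$, the right-continuity of $s\mapsto u(X(s))$ forces the pointwise limit of $e^{-\alpha t}P_tH_\alpha^A u$ as $t\downarrow 0$ to be $H_\alpha^A u$ itself, i.e. $H_\alpha^A u$ coincides with its own excessive regularization. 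Hence $H_\alpha^A u\in\mathcal{E}(\mathcal{U}_\alpha)$.

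\textbf{Step 2: two-sided comparison with $R_\alpha^A u$, and conclusion.} For $B_\alpha^A u\ge H_\alpha^A u$: if $v\in\mathcal{E}(\mathcal{U}_\alpha)$ satisfies $v\ge u$ on $A$, optional sampling of $e^{-\alpha s}v(X(s))$ at $T_A$ gives $v(x)\ge\mathbb{E}^x\{e^{-\alpha T_A}v(X(T_A))\,;\,T_A<\infty\}\ge H_\alpha^A u(x)$, using that $X(T_A)\in A$ on $\{T_A<\infty\}$ and $v\ge u$ there; taking the infimum over such $v$ gives $R_\alpha^A u\ge H_\alpha^A u$, whence $B_\alpha^A u=\widehat{R_\alpha^A u}\ge H_\alpha^A u$. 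For the reverse: if $x$ is regular for $A$ then $T_A=0$ $\mathbb{P}^x$-a.s., so $H_\alpha^A u(x)=u(x)$; thus $H_\alpha^A u$ is an $\alpha$-excessive function dominating $u$ at every point regular for $A$, and since $A$ and its set of regular points have the same balayage (their symmetric difference being semipolar), it follows that $H_\alpha^A u\ge B_\alpha^A u$. Combining, $B_\alpha^A u=H_\alpha^A u$. Taking $u\equiv 1$ gives $B_\alpha^A 1(x)=\mathbb{E}^x\{e^{-\alpha T_A}\,;\,T_A<\infty\}$; since $e^{-\alpha T_A}>0$ on $\{T_A<\infty\}$, this vanishes identically in $x$ if and only if $\mathbb{P}^x(T_A<\infty)=0$ for all $x\in E$, which is exactly the asserted characterization of polar sets.

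\textbf{Main obstacle.} The genuinely delicate point is the assertion used in Step 2 that $X(T_A)\in A$ $\mathbb{P}^x$-a.s. on $\{T_A<\infty\}$ for a general Borel (hence nearly Borel) set $A$: this is not elementary and rests on the section theorem together with the quasi-left-continuity of right processes, and is typically proved first for finely closed sets and then bootstrapped via the fine-closure machinery. A secondary technicality is the identity "balayage over $A$ equals balayage over the set of regular points of $A$", i.e. that the irregular points of $A$ form a semipolar set invisible to $B_\alpha^A$; this is precisely where the gap between the hitting time $T_A=\inf\{t>0:X(t)\in A\}$ and the début $D_A=\inf\{t\ge 0:X(t)\in A\}$ is absorbed.
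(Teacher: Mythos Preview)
The paper does not supply its own proof of this theorem: it is stated in the Appendix as a classical identification due to G.~A.~Hunt, with a bare reference to \cite{DeMe78}. Your overall architecture---showing that the hitting kernel $H_\alpha^A u$ is $\alpha$-excessive, then comparing it two ways with the r\'eduite---is exactly the standard route, and Step~1 together with the final polarity statement are fine.

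There is, however, a genuine error in Step~2 and in your ``Main obstacle'' paragraph. The assertion that $X(T_A)\in A$ $\mathbb{P}^x$-a.s.\ on $\{T_A<\infty\}$ for a general Borel $A$ is not a delicate truth requiring the section theorem: it is \emph{false}. (Take $A=(0,1)$ and $X$ a Brownian motion started at $0$; then $T_A=0$ a.s.\ and $X(T_A)=0\notin A$.) What is true is that $X(T_A)\in\overline{A}^{\,f}=A\cup A^r$ a.s.\ on $\{T_A<\infty\}$, where $\overline{A}^{\,f}$ is the fine closure and $A^r$ the set of points regular for $A$. Your inequality $v(X(T_A))\ge u(X(T_A))$ is rescued not by forcing $X(T_A)\in A$, but by noting that $u,v\in\mathcal{E}(\mathcal{U}_\alpha)$ are finely continuous, so $\{v\ge u\}$ is finely closed, contains $A$, and therefore contains $\overline{A}^{\,f}\ni X(T_A)$. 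Relatedly, right processes are \emph{not} quasi-left-continuous in general (that is a defining property of Hunt processes), so your proposed justification via quasi-left-continuity is misplaced. For the reverse inequality, the statement ``$A$ and $A^r$ have the same balayage'' is the correct idea but is itself nontrivial: it rests on the theorem that $A\setminus A^r$ is semipolar and on $T_A=T_{\overline{A}^{\,f}}$ a.s., both of which are substantial facts for right processes. With these repairs the argument goes through.
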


Without further conditions, the assumption (H') from the beginning of this section, although necessary, is not sufficient to ensure the existence of a right process associated with $\mathcal{U}$ (see Corollary  \ref{rem 3.16}).
Nevertheless, there is always a larger space on which such a process exists, and let us briefly recall it's construction.

We denote by $Exc(\mathcal{U}_\beta)$ the set of all $\mathcal{U}_\beta$-excessive measures 
($\xi \in Exc(\mathcal{U}_\beta)$ if and only if $\xi$ is a $\sigma$-finite measure on $M$ and $\xi \circ \alpha U_{\alpha+\beta} \leq \xi$ for all $\alpha >0$).

\begin{defi} Let $\beta >0$.
\begin{enumerate}[(i)]
\item $Exc(\mathcal{U}_\beta)$ denotes the set of all $\mathcal{U}_\beta$-excessive measures: 
$\xi \in Exc(\mathcal{U}_\beta)$ if and only if $\xi$ is a $\sigma$-finite measure on $M$ and $\xi \circ \alpha U_{\alpha+\beta} \leq \xi$ for all $\alpha >0$.
\item The {\it energy functional} associated with $\mathcal{U}_\beta$ is $L^{\beta}: Exc(\mathcal{U}_\beta)\times \mathcal{E}(\mathcal{U}_\beta) \rightarrow \overline{\mathbb{R}}_+$ given by
$$
L^{\beta}(\xi,v):=\sup\{\mu(v) \; : \; \mu \mbox{ is a } \sigma\mbox{- finite measure, } \mu \circ U_\beta \leq \xi\}
$$
\item The {\it saturation} of $E$ (with respect to $\mathcal{U}_\beta$) is the set $E_1$ of all extreme points of the set $\{\xi \in Exc(\mathcal{U}_\beta)\; : \; L^{\beta}(\xi,1)=1\}$.
\end{enumerate}
\end{defi}

The map $E\ni x \mapsto \delta_x \circ U_\beta \in Exc(\mathcal{U}_\beta)$ is an embedding of $E$ into $E_1$ and 
every $\mathcal{U}_\beta$-excessive function $v$ has an extension $\widetilde{v}$ to $E_1$, defined as $\widetilde{v}(\xi):=L^{\beta}(\xi,v)$.
The set $E_1$ is endowed with the $\sigma$-algebra $\mathcal{B}_1$ generated by the family $\{\widetilde{v}: \; v\in \mathcal{E}(\mathcal{U}_\beta)\}$.
In addition, as in \cite{BeBoRo06}, sections 1.1 and 1.2., there exists a unique resolvent of kernels $\mathcal{U}^{1}=(U^{1}_\alpha)_{\alpha>0}$ on $(E_1, \mathcal{B}_1)$ 
which is an extension of $\mathcal{U}$ in the sense of Definition \ref{defi 2.2}, and it satisfies the assumption (H') from the beginning of this section; more precisely, it is given by
\begin{equation} \label{eq 4.1}
U^{1}_\alpha f(\xi)=L^{\beta}(\xi, U_\alpha (f|_M)) 
\mbox{ for all } f \in bp\mathcal{B}_1, x\in M_1, \alpha >0.
\end{equation}
Note that $(E_1,\mathcal{B}_1)$ is a Lusin measurable space, the map $x \mapsto\delta_x \circ U_\beta$ identifies $E$ with a subset of $E_1$, $E\in \mathcal{B}_1$ and $\mathcal{B}=\mathcal{B}_1|_E$.

We end this section with the following key result 
on which our work from Section 2 heavily relies on
 (cf.  (2.3)  from \cite{BeRo11a},  sections 1.7 and 1.8 in \cite{BeBo04a},  Theorem 1.3 from \cite{BeBoRo06}, and section  3 in \cite{BeBoRo06a}):

\begin{thm} \label{thm 4.15}
There is always a right process on the saturation $(E_1,\mathcal{B}_1)$, associated with $\mathcal{U}^{1}$. Moreover, the following assertions are equivalent:
\begin{enumerate}[(i)]
\item There exists a right process on $E$ associated with $\mathcal{U}$.
\item The set $E_1\setminus E$ is polar (w.r.t. $\mathcal{U}_1$).
\end{enumerate}
\end{thm}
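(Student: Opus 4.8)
This is a Ray--Knight type statement, and the plan is to pass through a Ray compactification. First I would fix $\beta>0$ and, using Remark \ref{rem 4.6}(ii), build a Ray cone $\mathcal{R}$ associated with $\mathcal{U}_\beta$; choosing a countable $\mathcal{R}_0\subset\mathcal{R}$ that generates the Ray topology and separates the points of $E$ (possible by (H')), embed $E$ into the compact metrizable space $\widehat E$ obtained as the closure of the image of $E$ under $x\mapsto(v(x))_{v\in\mathcal{R}_0}$. Every $v\in\mathcal{R}$ extends continuously to $\widehat E$, and the Ray-cone relation $U_\alpha((\mathcal{R}-\mathcal{R})_+)\subset\mathcal{R}$ together with a monotone class/density argument lets one extend $\mathcal{U}_\beta$ to a Ray resolvent $\widehat{\mathcal{U}}$ on $\widehat E$. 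The classical Ray--Knight theorem then yields a strong Markov process $\widehat X$ on $\widehat E$ with resolvent $\widehat{\mathcal{U}}$ whose paths are right continuous with left limits.

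Next I would locate $E_1$ inside $\widehat E$. Let $\widehat E^{\,n}$ be the Borel set of non-branch points of $\widehat{\mathcal{U}}_\beta$; it contains $E$, and $\widehat X$ started from any point immediately enters $\widehat E^{\,n}$ and remains there. Using the energy functional $L^\beta$, the embedding $x\mapsto\delta_x\circ U_\beta$, and the specific solidity of potentials in $Exc(\widehat{\mathcal{U}}_\beta)$ (valid on $\widehat E^{\,n}$), one identifies $E_1$ with a Borel absorbing subset of $\widehat E^{\,n}$ with $E\subset E_1$, and checks that the resolvent $\mathcal{U}^1$ of \eqref{eq 4.1} is precisely the restriction of $\widehat{\mathcal{U}}$ to $E_1$. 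Since $E_1$ is Borel and absorbing for $\widehat X$, the restriction $X^1$ of $\widehat X$ to $E_1$ is a normal Markov process with resolvent $\mathcal{U}^1$, right continuous for the (natural) Ray topology, and the filtration/measurability requirements of Definition \ref{defi 4.4} are verified as in the classical theory; this gives the first assertion.

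For the equivalence, the direction (ii)$\Rightarrow$(i) is short: if $E_1\setminus E$ is $\mathcal{U}^1$-polar, then by Hunt's identification (Theorem \ref{thm 4.13}) applied to $X^1$ one has $\mathbb{P}^x(T_{E_1\setminus E}<\infty)=0$ for every $x\in E$, so $X^1(t)\in E$ for all $t\ge0$ $\mathbb{P}^x$-a.s.; restricting $X^1$ to the absorbing set $E$ produces a right process on $E$ with resolvent $\mathcal{U}^1|_E=\mathcal{U}$. For (i)$\Rightarrow$(ii), suppose a right process $X$ on $E$ with resolvent $\mathcal{U}$ exists. By Corollary \ref{coro 4.10} its paths are right continuous for the Ray topology, and since $w(X)$ is a right continuous supermartingale for every $\mathcal{U}_\beta$-excessive $w$, the paths have left limits in $\widehat E$; uniqueness of the right process with resolvent $\mathcal{U}$ on the Ray-compact setting forces $\widehat X$ started in $E$ to coincide in law with $X$, hence $X^1$ stays in $E$ when started from $E$, so $T_{E_1\setminus E}=\infty$ $\mathbb{P}^x$-a.s. for $x\in E$. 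It remains to propagate this to all of $E_1$: from \eqref{eq 4.1} one reads off $U^1_\alpha 1_{E_1\setminus E}\equiv0$, so $E_1\setminus E$ is $\mathcal{U}^1$-negligible, and one combines this with the non-branch structure and the fine-topology regularity of the paths of $X^1$ to upgrade negligibility to polarity, i.e.\ $B^{E_1\setminus E}_\alpha1=0$.

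\textbf{Main obstacle.} The delicate point is this last step: a $\mathcal{U}^1$-negligible Borel set need not be polar in general, so one must genuinely use that $E_1$ is the \emph{saturation} — that the extra points of $E_1\setminus E$ behave like unreachable entrance-boundary points of $\widehat X$, which is exactly what fails when no right process on $E$ exists (cf.\ Corollary \ref{rem 3.16}). Making this precise requires the interplay between the fine topology, the Ray topology, and the branch-point analysis on $\widehat E$ developed in \cite{BeBo04a}, \cite{BeBoRo06}, \cite{BeRo11a}; the Ray--Knight construction in Step~1 is substantial but standard, while Step~2 (pinning down $E_1$ exactly inside $\widehat E$ via specific solidity) is the other place where care is needed.
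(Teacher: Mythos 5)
The paper does not actually prove Theorem \ref{thm 4.15}: it is quoted from the literature (display (2.3) of \cite{BeRo11a}, Sections 1.7--1.8 of \cite{BeBo04a}, Theorem 1.3 of \cite{BeBoRo06}, Section 3 of \cite{BeBoRo06a}), so there is no in-paper argument to measure yours against. That said, your outline follows exactly the route of those references: Ray cone, compactification $\widehat E$, Ray--Knight on $\widehat E$, identification of the saturation $E_1$ with an absorbing Borel set of non-branch points, restriction to get $X^{1}$, and then the polarity dichotomy. The architecture is correct.

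Where I would push back is on how you distribute the difficulty. The step you flag as the ``main obstacle'' --- upgrading negligibility of $E_1\setminus E$ to polarity --- is in fact the easy part and needs no branch-point or fine-topology analysis: by (\ref{eq 4.1}) one has $U^{1}_{\gamma}f(\xi)=L^{\beta}(\xi,U_{\gamma}(f|_E))$, so the kernels $U^{1}_{\gamma}$ only see the restriction of a function to $E$; consequently any $\mathcal{U}^{1}_\alpha$-excessive function $w$ vanishing on $E$ satisfies $w=\sup_{\gamma}\gamma U^{1}_{\gamma+\alpha}w=0$ on all of $E_1$. Hence once $B^{E_1\setminus E}_{\alpha}1=0$ on $E$, it is zero everywhere in one line. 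The genuinely delicate step is the one you dispatch in half a sentence: proving $B^{E_1\setminus E}_{\alpha}1=0$ on $E$, i.e.\ that $X^{1}$ started from $x\in E$ avoids $E_1\setminus E$ for \emph{all} $t$ simultaneously --- negligibility only controls each fixed $t$, and Corollary \ref{rem 3.16} shows this is exactly where things can fail. This is where the hypothesis (i) enters: one must identify the law of $X^{1}$ under $\mathbb{P}^{x}$, $x\in E$, with that of the given right process $X$ on a common path space (the finite-dimensional distributions are pinned down by the resolvent through a countable Ray cone together with the right-continuity of $t\mapsto v(X(t))$ for excessive $v$), and then invoke Hunt's identification (Theorem \ref{thm 4.13}) for $X^{1}$. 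Likewise your Step 2 (locating $E_1$ inside $\widehat E$ and checking $\mathcal{U}^{1}=\widehat{\mathcal{U}}|_{E_1}$) is only asserted; it is the content of \cite{BeBo04a}, Sections 1.7--1.8, and is not short. In summary: as a map of the proof in the cited sources your proposal is accurate, but as a self-contained argument it still has its main gap at the uniqueness-in-law identification, not at the place you marked.
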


We give now a simple example of a strong Feller transition function on a one dimensional space, whose resolvent satisfies hypothesis (H') but does not have  associated a right process.

\begin{coro} \label{rem 3.16}
Let $(P_t)_{t\geq 0}$ be the $1$-dimensional Gaussian semigroup, 
i.e., the transition function of the real valued Brownian motion.
Let  $E:=\mathbb{R}\setminus \{0\}$ and consider  the restriction $\mathcal{V}$ of 
$\mathcal{U}$ from $\mathbb{R}$ to $E$ (which is possible because  $U_\alpha (1_{\{0\}})\equiv 0, \alpha >0$). 
Then $\mathcal{V}$ is  $C^{\infty}_b$-strong Feller (i.e. each kernel $V_\alpha$ maps bounded functions to $C^{\infty}_b$-functions), it 
satisfies (H'),  and there is no right process on $E$ with resolvent $\mathcal{V}$.
\end{coro}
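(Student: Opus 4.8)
Since the Gaussian kernels are absolutely continuous, $U_\alpha(1_{\{0\}})\equiv 0$, so for $f\in b\mathcal B(E)$ one may set $V_\alpha f:=(U_\alpha\bar f)|_E$, $\bar f$ being the extension of $f$ by $0$; then $V_\alpha 1_E\equiv 1/\alpha$, so $\mathcal V$ is a (conservative) Markovian resolvent on $E$. The resolvent density is explicit, $u_\alpha(x,y)=(2\alpha)^{-1/2}e^{-(2\alpha)^{1/2}|x-y|}$, and differentiating under the integral sign gives $(V_\alpha f)'(x)=\int_E \operatorname{sgn}(y-x)\,e^{-(2\alpha)^{1/2}|x-y|}f(y)\,dy$, hence $|V_\alpha f|_{\rm Lip}\le (2/\alpha)^{1/2}\|f\|_\infty$; bootstrapping the identity $(V_\alpha f)''=2\alpha V_\alpha f-2f$ shows that $V_\alpha$ carries $C^k_b$ into $C^{k+2}_b$, so that $V_\alpha(C^\infty_b)\subset C^\infty_b$ and, in any case, $\mathcal V$ is (Lipschitz) strong Feller. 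For (H'), I would verify the stronger condition (H), which by Remark~\ref{rem 4.2}(ii) implies (H'): apply Corollary~\ref{prop 2.5} with $\mathcal C:=\{f\in Lip_b(E):f\ge 0\}$. The inclusion $V_\alpha(Lip_b(E))\subset Lip_b(E)$ is the Lipschitz bound above, and for $f\in Lip_b(E)$ one has, by a change of variables, $|V_{\alpha+\beta}f(x)-V_{\alpha+\beta}f(x')|\le |x-x'|\,|f|_{\rm Lip}\int u_{\alpha+\beta}(0,z)\,dz=|x-x'|\,|f|_{\rm Lip}/(\alpha+\beta)$, so $(\alpha V_{\alpha+\beta}f)_{\alpha>0}$ is Lipschitz with constant $\le|f|_{\rm Lip}$ uniformly in $\alpha$, hence equicontinuous; thus (H), and (H'), hold.

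\textbf{Non-existence of a right process: strategy.} Assume, for contradiction, that there is a right process on $E$ with resolvent $\mathcal V$. Consider the saturation $(E_1,\mathcal B_1,\mathcal U^1)$ of $(E,\mathcal V_\beta)$ from the Appendix; by Theorem~\ref{thm 4.15} the set $E_1\setminus E$ would then be polar with respect to $\mathcal U^1$. I will show this is impossible, because $\{0\}$ is not polar for one–dimensional Brownian motion. The key point is that $\mathbb R$ embeds into $E_1$: the map $\mathbb R\ni x\mapsto \xi_x:=\delta_x\circ U_\beta$, where $\delta_x\circ U_\beta$ is regarded as the $\sigma$-finite measure on $E$ with density $u_\beta(x,\cdot)$ (which does not charge $\{0\}$), sends every $x\in\mathbb R$ — including $x=0$ — to an extreme point of $\{\xi\in Exc(\mathcal V_\beta):L^\beta(\xi,1)=1\}$ (this is the standard fact that potentials of Dirac measures are extreme, via the Riesz/specific-solidity decomposition; note $L^\beta(\delta_xU_\beta,v)=\delta_x(v)=v(x)$ for $v\in\mathcal E(\mathcal V_\beta)$), and this map is injective. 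By formula (\ref{eq 4.1}), $U^1_\alpha h(\xi_x)=L^\beta(\xi_x,U_\alpha(h|_E))=U_\alpha(h|_E)(x)=U_\alpha h(x)$ for bounded $h$, i.e.\ $\mathcal U^1$ restricted to (the image of) $\mathbb R$ coincides with $\mathcal U$. Since $0\notin E$ while $\xi_0\in E_1$, we get $\xi_0\in E_1\setminus E$.

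\textbf{Non-existence: the contradiction.} It remains to see that $\xi_0$ is not polar with respect to $\mathcal U^1$. If $v\in\mathcal E(\mathcal U^1_\alpha)$ satisfies $v(\xi_0)\ge 1$, then $v|_{\mathbb R}$ is $\mathcal U_\alpha$-excessive on $\mathbb R$ with $v(0)\ge 1$, hence $v(x)\ge e^{-\sqrt{2\alpha}\,|x|}$ for all $x\in\mathbb R$, because $e^{-\sqrt{2\alpha}\,|\cdot|}=R^{\{0\}}_\alpha 1$ for Brownian motion (it is the $\alpha$-potential of $\sqrt{2\alpha}\,\delta_0$, equivalently $\mathbb E^x[e^{-\alpha T_0}]$, with $0$ regular for $\{0\}$). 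Therefore $R^{\{0\}}_\alpha 1\ge e^{-\sqrt{2\alpha}\,|\cdot|}>0$ on $\mathbb R\subset E_1$, so $B^{\{0\}}_\alpha 1=\widehat{R^{\{0\}}_\alpha 1}\ne 0$ and $\{0\}$ is not polar; equivalently, by Hunt's theorem (Theorem~\ref{thm 4.13}), the right process on $E_1$ restricted to the absorbing set $\mathbb R$ is Brownian motion, which hits $0$ almost surely from every starting point. This contradicts $E_1\setminus E$ being polar, and the proof is complete.

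\textbf{Main obstacle.} The only delicate step is the identification of a copy of $\mathbb R$ inside the saturation $E_1$ together with $\mathcal U^1|_{\mathbb R}=\mathcal U$: one must check that $\delta_0\circ U_\beta$ is a genuine extreme $\mathcal V_\beta$-excessive measure of unit energy (so that it is a \emph{point} of $E_1$) and that it is not already an element of $E$, and that the induced resolvent on $\mathbb R$ is the Brownian one. Everything else — the strong Feller/regularity statement, condition (H'), and, once $\xi_0\in E_1\setminus E$ is available, the non-polarity of $\xi_0$ — is routine, the latter being just the classical point-recurrence of one-dimensional Brownian motion read through Theorem~\ref{thm 4.15} and Hunt's balayage formula.
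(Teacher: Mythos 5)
Your proposal is correct and follows essentially the same route as the paper: the regularity and (H') assertions are routine consequences of the explicit Gaussian resolvent density, and the non-existence part proceeds, exactly as in the paper (following the example in \cite{BeRo11a}), by embedding $\mathbb{R}$ into the saturation $E_1$ via $x\mapsto\delta_x\circ U_\beta$ so that $0\in E_1\setminus E$, invoking Theorem \ref{thm 4.15} to conclude that $\{0\}$ would have to be polar, and contradicting this with the fact that one-dimensional Brownian motion hits $0$ (Theorem \ref{thm 4.13}). Your write-up merely makes explicit the steps the paper leaves as "one can easily see" (extremality of $\delta_0\circ U_1$, $\mathcal{U}^1|_{\mathbb{R}}=\mathcal{U}$, and the lower bound $e^{-\sqrt{2\alpha}|x|}$ for the reduced function), all of which check out.
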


\begin{proof}
Clearly, the semigroup $(P_t)_{t\geq 0}$ is $C^{\infty}_b$-strong Feller.
Consequently, so is its resolvent $\mathcal{U}$, which in addition satisfies (H') and $\mathcal{V}$ preserves the same properties.
Further, we can argue as in \cite{BeRo11a}, the Example at page 849.
One can easily see that  $Exc(\mathcal{U}_1)=Exc(\mathcal{V}_1)$ and that $\delta_0\circ U_1$ is extremal in $Exc(\mathcal{U}_1)$.  
Therefore, if $E_1$ is the saturation of $E$ then we have the embeddings $E\subset \mathbb{R} \subset
E_1$ and $\mathcal{U}=\mathcal{U}^{1}|_{\mathbb{R}}$. 
Now, if $\mathcal{V}$ admits a right process on $E$, by Theorem \ref{thm 4.15} we have that the set $\{0\}$ is polar w.r.t. $\mathcal{U}$. But by Theorem \ref{thm 4.13} it means that the real valued Brownian motion never hits $0$, which is of course wrong.
\end{proof}

\vspace{0.3cm}
\noindent
{\bf Acknowledgments.} 
 Financial support by the Deutsche Forschungsgemeinschaft, project number  CRC 1283 is gratefully acknowledged.
 For the  first and the second named authors this work was supported by a grant of Ministry of Research and Innovation, CNCS - UEFISCDI, project number PN-III-P4-IDPCE-2016-0372, within PNCDI III.

\end{document}